\documentclass[11pt]{preprint}
\usepackage[full]{textcomp}
\usepackage[osf]{newtxtext} 
\usepackage[cal=boondoxo]{mathalfa}
\usepackage{colortbl}
\usepackage{shuffle}

\usepackage{comment}

\usepackage{amssymb}
\usepackage{lmodern}
\usepackage{mathtools}

\usepackage{hyperref}
\usepackage{breakurl}
\usepackage{aligned-overset}
\usepackage{mhenvs}
\usepackage{mhequ} 
\newcommand{\be}{\begin{equation*}}
	\newcommand{\ee}{\end{equation*}}
\usepackage{mhsymb}
\usepackage{booktabs}
\usepackage{tikz}
\usepackage{tcolorbox}
\usepackage{mathrsfs}
\usepackage[utf8]{inputenc}
\usepackage{longtable}
\usepackage{wrapfig}
\usepackage{subcaption}
\usepackage{mathrsfs}
\usepackage{epsfig}
\usepackage{microtype}
\usepackage{comment}
\usepackage{wasysym}
\usepackage{centernot}
\usepackage{enumitem}
\usepackage{bm}
\usepackage{stackrel}
\usepackage{graphicx}
\usepackage{tikz-cd}
\usepackage{quiver}

\makeatletter
\newcommand{\globalcolor}[1]{%
	\color{#1}\global\let\default@color\current@color
}
\makeatother

\usetikzlibrary{calc}
\usetikzlibrary{decorations}
\usetikzlibrary{positioning}
\usetikzlibrary{shapes}
%\usetikzlibrary{external}
\usetikzlibrary{shapes.misc}

\tikzset{cross/.style={cross out, draw=black, fill=none, minimum size=2*(#1-\pgflinewidth), inner sep=0pt, outer sep=0pt}, cross/.default={2pt}}

\definecolor{blush}{rgb}{0.87, 0.36, 0.51}
\definecolor{brightcerulean}{rgb}{0.11, 0.67, 0.84}
\definecolor{greenryb}{rgb}{0.4, 0.69, 0.2}

\newif\ifdark
%\IfFileExists{dark}{\darktrue}{\darkfalse}
\darkfalse

\ifdark
\definecolor{darkred}{rgb}{0.9,0.2,0.2}
\definecolor{darkblue}{rgb}{0.7,0.3,1}
\definecolor{darkgreen}{rgb}{0.1,0.9,0.1}
\definecolor{franck}{rgb}{0,0.8,1}
\definecolor{pagebackground}{rgb}{.15,.21,.18}
\definecolor{pageforeground}{rgb}{.84,.84,.85}
\pagecolor{pagebackground}
\AtBeginDocument{\globalcolor{pageforeground}}
%\tikzexternalize[prefix=tikzDark/]
\definecolor{symbols}{rgb}{0,0.7,1}
\colorlet{connection}{red!80!black}
\colorlet{boxcolor}{blue!50}

\else

\definecolor{darkred}{rgb}{0.7,0.1,0.1}
\definecolor{darkblue}{rgb}{0.4,0.1,0.8}
\definecolor{darkgreen}{rgb}{0.1,0.7,0.1}
\definecolor{franck}{rgb}{0,0,1}
\definecolor{pagebackground}{rgb}{1,1,1}
\definecolor{pageforeground}{rgb}{0,0,0}
%\tikzexternalize[prefix=tikz/]  %
\colorlet{symbols}{blue!90!black}
\colorlet{connection}{red!30!black}
\colorlet{boxcolor}{blue!50!black}

\fi

\def\slash{\leavevmode\unskip\kern0.18em/\penalty\exhyphenpenalty\kern0.18em}
\def\dash{\leavevmode\unskip\kern0.18em--\penalty\exhyphenpenalty\kern0.18em}

\DeclareMathAlphabet{\mathbbm}{U}{bbm}{m}{n}

\DeclareFontFamily{U}{BOONDOX-calo}{\skewchar\font=45 }
\DeclareFontShape{U}{BOONDOX-calo}{m}{n}{
	<-> s*[1.05] BOONDOX-r-calo}{}
\DeclareFontShape{U}{BOONDOX-calo}{b}{n}{
	<-> s*[1.05] BOONDOX-b-calo}{}
\DeclareMathAlphabet{\mcb}{U}{BOONDOX-calo}{m}{n}
\SetMathAlphabet{\mcb}{bold}{U}{BOONDOX-calo}{b}{n}
%\DeclareMathAlphabet{\mathbcalboondox}{U}{BOONDOX-calo}{b}{n}

\setlist{noitemsep,topsep=4pt,leftmargin=1.5em}

\DeclareMathAlphabet{\mathbbm}{U}{bbm}{m}{n}

\DeclareMathAlphabet{\mcb}{U}{BOONDOX-calo}{m}{n}
\SetMathAlphabet{\mcb}{bold}{U}{BOONDOX-calo}{b}{n}
\DeclareFontFamily{U}{mathx}{\hyphenchar\font45}
\DeclareFontShape{U}{mathx}{m}{n}{
	<5> <6> <7> <8> <9> <10>
	<10.95> <12> <14.4> <17.28> <20.74> <24.88>
	mathx10
}{}
\DeclareSymbolFont{mathx}{U}{mathx}{m}{n}
\DeclareMathSymbol{\bigtimes}{1}{mathx}{"91}

\setlength{\marginparwidth}{3cm}

%\long\def\yvonneText#1{{\color{darkblue}Yvonne:\ #1}}

\providecommand{\figures}{false}
{ \ifthenelse{\equal{\figures}{false}} {#1}{\[ {\rm Figure \ missing !} \]} }{}

\newcommand{\trigl}{\vartriangleleft}

\usepackage{mathtools}

\tikzstyle{tinydots}=[dash pattern=on \pgflinewidth off \pgflinewidth]
\tikzstyle{superdense}=[dash pattern=on 4pt off 1pt]

%mathcals

%\newcommand{\G}{\mathcal{G}}

%\newcommand{\mcT}{\mathcal{T}}

\newcommand{\beq}{\begin{equation}}
	\newcommand{\eeq}{\end{equation}}

\usepackage{empheq}

%mathbbs

%mathbfs

%mathfraks

\newcommand\Item[1][]{%
	\ifx\relax#1\relax  \item \else \item[#1] \fi
	\abovedisplayskip=0pt\abovedisplayshortskip=0pt~\vspace*{-\baselineskip}}

\def\${|\!|\!|}

\usepackage{thmtools}
\theoremstyle{definition}

\newenvironment{DIFnomarkup}{}{} % see man latexdiff

\theorembodyfont{\rmfamily}

\newfont{\indic}{bbmss12}

\def\Nabla_#1{\nabla_{\!#1}}

%%%%%%%%%%%%%%%%%%%%%%%%%%%%%%%%%%%%%%%%%%%%%%%%%%%%%%%%
%
%
%              Some tikz code to draw nice trees
%
%
%%%%%%%%%%%%%%%%%%%%%%%%%%%%%%%%%%%%%%%%%%%%%%%%%%%%%%%%

\makeatletter
\pgfdeclareshape{crosscircle}
{
	\inheritsavedanchors[from=circle] % this is nearly a circle
	\inheritanchorborder[from=circle]
	\inheritanchor[from=circle]{north}
	\inheritanchor[from=circle]{north west}
	\inheritanchor[from=circle]{north east}
	\inheritanchor[from=circle]{center}
	\inheritanchor[from=circle]{west}
	\inheritanchor[from=circle]{east}
	\inheritanchor[from=circle]{mid}
	\inheritanchor[from=circle]{mid west}
	\inheritanchor[from=circle]{mid east}
	\inheritanchor[from=circle]{base}
	\inheritanchor[from=circle]{base west}
	\inheritanchor[from=circle]{base east}
	\inheritanchor[from=circle]{south}
	\inheritanchor[from=circle]{south west}
	\inheritanchor[from=circle]{south east}
	\inheritbackgroundpath[from=circle]
	\foregroundpath{
		\centerpoint%
		\pgf@xc=\pgf@x%
		\pgf@yc=\pgf@y%
		\pgfutil@tempdima=\radius%
		\pgfmathsetlength{\pgf@xb}{\pgfkeysvalueof{/pgf/outer xsep}}%  
		\pgfmathsetlength{\pgf@yb}{\pgfkeysvalueof{/pgf/outer ysep}}%  
		\ifdim\pgf@xb<\pgf@yb%
		\advance\pgfutil@tempdima by-\pgf@yb%
		\else%
		\advance\pgfutil@tempdima by-\pgf@xb%
		\fi%
		\pgfpathmoveto{\pgfpointadd{\pgfqpoint{\pgf@xc}{\pgf@yc}}{\pgfqpoint{-0.707107\pgfutil@tempdima}{-0.707107\pgfutil@tempdima}}}
		\pgfpathlineto{\pgfpointadd{\pgfqpoint{\pgf@xc}{\pgf@yc}}{\pgfqpoint{0.707107\pgfutil@tempdima}{0.707107\pgfutil@tempdima}}}
		\pgfpathmoveto{\pgfpointadd{\pgfqpoint{\pgf@xc}{\pgf@yc}}{\pgfqpoint{-0.707107\pgfutil@tempdima}{0.707107\pgfutil@tempdima}}}
		\pgfpathlineto{\pgfpointadd{\pgfqpoint{\pgf@xc}{\pgf@yc}}{\pgfqpoint{0.707107\pgfutil@tempdima}{-0.707107\pgfutil@tempdima}}}
	}
}
\makeatother

\def\symbol#1{\textcolor{symbols}{#1}}

\def\decorate#1#2{
	\ifnum#2>0
	\foreach \count in {1,...,#2}{
		let
		\p1 = (sourcenode.center),
		\p2 = (sourcenode.east),
		\n1 = {\x2-\x1},
		\n2 = {1mm},
		\n3 = {(1.3+0.6*(\count-1))*\n1},
		\n4 = {0.7*\n1}
		in 
		node[rectangle,fill=symbols,rotate=30,inner sep=0pt,minimum width=0.2*\n2,minimum height=\n2] at ($(sourcenode.center) + (\n3,\n4)$) {}
	}
	\fi
	\ifnum#1>0
	\foreach \count in {1,...,#1}{
		let
		\p1 = (sourcenode.center),
		\p2 = (sourcenode.east),
		\n1 = {\x2-\x1},
		\n2 = {1mm},
		\n3 = {(1.3+0.6*(\count-1))*\n1},
		\n4 = {0.7*\n1}
		in 
		node[rectangle,fill=symbols,rotate=-30,inner sep=0pt,minimum width=0.2*\n2,minimum height=\n2] at ($(sourcenode.center) + (-\n3,\n4)$) {}
	}
	\fi
}

\tikzset{
	dectriangle/.style 2 args={
		triangle,
		alias=sourcenode,
		append after command={\decorate{#1}{#2}}
	},
	dectriangle/.default={0}{0},
}

\tikzset{
	cross/.style={path picture={ 
			\draw[symbols]
			(path picture bounding box.south east) -- (path picture bounding box.north west) (path picture bounding box.south west) -- (path picture bounding box.north east);
	}},
	root/.style={circle,fill=green!50!black,inner sep=0pt, minimum size=1.2mm},
	dot/.style={circle,fill=pageforeground,inner sep=0pt, minimum size=1mm},
	dotred/.style={circle,fill=pageforeground!50!pagebackground,inner sep=0pt, minimum size=2mm},
	var/.style={circle,fill=pageforeground!10!pagebackground,draw=pageforeground,inner sep=0pt, minimum size=3mm},
	kernel/.style={semithick,shorten >=2pt,shorten <=2pt},
	kernels/.style={snake=zigzag,shorten >=2pt,shorten <=2pt,segment amplitude=1pt,segment length=4pt,line before snake=2pt,line after snake=5pt,},
	rho/.style={densely dashed,semithick,shorten >=2pt,shorten <=2pt},
	testfcn/.style={dotted,semithick,shorten >=2pt,shorten <=2pt},
	renorm/.style={shape=circle,fill=pagebackground,inner sep=1pt},
	labl/.style={shape=rectangle,fill=pagebackground,inner sep=1pt},
	xic/.style={very thin,circle,draw=symbols,fill=symbols,inner sep=0pt,minimum size=1.2mm},
	g/.style={very thin,rectangle,draw=symbols,fill=symbols!10!pagebackground,inner sep=0pt,minimum width=2.5mm,minimum height=1.2mm},
	xi/.style={very thin,circle,draw=symbols,fill=symbols!10!pagebackground,inner sep=0pt,minimum size=1.2mm},
	xies/.style={very thin,rectangle,fill=green!50!black!25,draw=symbols,inner sep=0pt,minimum size=1.1mm},
	xiesf/.style={very thin,rectangle,fill=green!50!black,draw=symbols,inner sep=0pt,minimum size=1.1mm},
	xix/.style={very thin,crosscircle,fill=symbols!10!pagebackground,draw=symbols,inner sep=0pt,minimum size=1.2mm},
	X/.style={very thin,cross,rectangle,fill=pagebackground,draw=symbols,inner sep=0pt,minimum size=1.2mm},
	xib/.style={thin,circle,fill=symbols!10!pagebackground,draw=symbols,inner sep=0pt,minimum size=1.6mm},
	xie/.style={thin,circle,fill=green!50!black,draw=symbols,inner sep=0pt,minimum size=1.6mm},
	xid/.style={thin,circle,fill=symbols,draw=symbols,inner sep=0pt,minimum size=1.6mm},
	xibx/.style={thin,crosscircle,fill=symbols!10!pagebackground,draw=symbols,inner sep=0pt,minimum size=1.6mm},
	kernels2/.style={very thick,draw=connection,segment length=12pt},
	keps/.style={thin,draw=symbols,->},
	kepspr/.style={thick,draw=connection,->},
	krho/.style={thin,draw=symbols,superdense,->},
	krhopr/.style={thick,draw=connection,superdense},
	triangle/.style = { regular polygon, regular polygon sides=3},
	not/.style={thin,circle,draw=connection,fill=connection,inner sep=0pt,minimum size=0.5mm},
	diff/.style = {very thin,draw=symbols,triangle,fill=red!50!black,inner sep=0pt,minimum size=1.6mm},
	diff1/.style = {very thin,dectriangle={1}{0},fill=red!50!black,draw=symbols,inner sep=0pt,minimum size=1.6mm},
	diff2/.style = {very thin,dectriangle={1}{1},fill=red!50!black,draw=symbols,inner sep=0pt,minimum size=1.6mm},
	diffmini/.style = {very thin,rectangle,fill=black,draw=black,inner sep=0pt,minimum size=0.75mm},
	kernelsmod/.style={very thick,draw=connection,segment length=12pt},
	rec/.style = {very thin,rectangle,fill=black,draw=black,inner sep=0pt,minimum size=2mm},
	cerc/.style={very thin,circle,draw=black,fill=symbols,inner sep=0pt,minimum size=2mm},
	stars/.style={very thin,star,star points=6,star point ratio=0.5, draw=black,fill=red,inner sep=0pt,minimum size=0.7mm},
	>=stealth,
}
\tikzset{
	root/.style={circle,fill=black!50,inner sep=0pt, minimum size=3mm},
	circ/.style={circle,fill=white,draw=black,very thin,inner sep=.5pt, minimum size=1.2mm},
	round1/.style={fill=white,outer sep = 0,inner sep=2pt,rounded corners=1mm,draw,text=black,thin,minimum size=1.2mm},
	circ1/.style={circle,fill=red!10,draw=red,very thin,inner sep=.5pt, minimum size=1.2mm},
	rect/.style={fill=white,outer sep = 0,inner sep=2pt,rectangle,draw,text=black,thin,minimum size=1.2mm},
	rect1/.style={fill=white,outer sep = 0,inner sep=2pt,rectangle,draw,text=black,thin,minimum size=1.2mm},
	round2/.style={fill=red!10,outer sep = 0,inner sep=2pt,rounded corners=1mm,draw,text=black,thin,minimum size=1.2mm},
	round3/.style={fill=blue!10,outer sep = 0,inner sep=2pt,rounded corners=1mm,draw,text=black,thin,minimum size=1.2mm}, 
	rect2/.style={fill=black!10,outer sep = 0,inner sep=2pt,rectangle,draw,text=black,thin,minimum size=1.2mm},
	dot/.style={circle,fill=black,inner sep=0pt, minimum size=1.2mm},
	dotred/.style={circle,fill=black!50,inner sep=0pt, minimum size=2mm},
	var/.style={circle,fill=black!10,draw=black,inner sep=0pt, minimum size=3mm},
	kernel/.style={semithick,shorten >=2pt,shorten <=2pt},
	diag/.style={thin,shorten >=4pt,shorten <=4pt},
	kernel1/.style={thick},
	kernels/.style={snake=zigzag,shorten >=2pt,shorten <=2pt,segment amplitude=1pt,segment length=4pt,line before snake=2pt,line after snake=5pt,},
	kernels1/.style={snake=zigzag,segment amplitude=0.5pt,segment length=2pt},
	rho1/.style={densely dotted,semithick},
	rho/.style={densely dashed,semithick,shorten >=2pt,shorten <=2pt},
	testfcn/.style={dotted,semithick,shorten >=2pt,shorten <=2pt},
	visible/.style={draw, circle, fill, inner sep=0.25ex},
	renorm/.style={shape=circle,fill=white,inner sep=1pt},
	labl/.style={shape=rectangle,fill=white,inner sep=1pt},
	xic/.style={very thin,circle,fill=symbols,draw=black,inner sep=0pt,minimum size=1.2mm},
	xi/.style={very thin,circle,fill=blue!10,draw=black,inner sep=0pt,minimum size=1.2mm},
	xib/.style={very thin,circle,fill=blue!10,draw=black,inner sep=0pt,minimum size=1.6mm},
	xie/.style={very thin,circle,fill=green!50!black,draw=black,inner sep=0pt,minimum size=1mm},
	xid/.style={very thin,circle,fill=symbols,draw=black,inner sep=0pt,minimum size=1.6mm},
	edgetype/.style={very thin,circle,draw=black,inner sep=0pt,minimum size=5mm},
	nodetype/.style={very thick,circle,draw=black,inner sep=0pt,minimum size=5mm},
	kernels2/.style={very thick,draw=connection,segment length=12pt},
	clean/.style={thin,circle,fill=black,inner sep=0pt,minimum size=1mm},	not/.style={thin,circle,fill=symbols,draw=connection,fill=connection,inner sep=0pt,minimum size=0.8mm},
	>=stealth,
}

\makeatletter
\def\DeclareSymbol#1#2#3{%
	\expandafter\gdef\csname MH@symb@#1\endcsname{\tikzsetnextfilename{symbol#1}%
		\tikz[baseline=#2,scale=0.15,draw=symbols,line join=round]{#3}}%
	\expandafter\gdef\csname MH@symb@#1s\endcsname{\scalebox{0.75}{\tikzsetnextfilename{symbol#1}%
			\tikz[baseline=#2,scale=0.15,draw=symbols,line join=round]{#3}}}%
	\expandafter\gdef\csname MH@symb@#1ss\endcsname{\scalebox{0.65}{\tikzsetnextfilename{symbol#1}%
			\tikz[baseline=#2,scale=0.15,draw=symbols,line join=round]{#3}}}%
}
\def\<#1>{\ifthenelse{\boolean{mmode}}{\mathchoice{\csname MH@symb@#1\endcsname}{\csname MH@symb@#1\endcsname}{\csname MH@symb@#1s\endcsname}{\csname MH@symb@#1ss\endcsname}}{\csname MH@symb@#1\endcsname}}
\makeatother

\DeclareSymbol{Xi22}{0.5}{\draw (0,0) node[xi] {} -- (-1,1) node[not] {} -- (0,2) node[xi] {};} % 1 not used in the text
\DeclareSymbol{Xi2}{-2}{\draw (-1,-0.25) node[xi] {} -- (0,1) node[xi] {};} % 2
\DeclareSymbol{Xi2b}{-2}{\draw (-1,-0.25) node[xic] {} -- (0,1) node[xic] {};} % 2
\DeclareSymbol{Xi2g}{-2}{\draw (-1,-0.25) node[xies] {} -- (0,1) node[xi] {};} % 2
\DeclareSymbol{Xi2g2}{-2}{\draw (-1,-0.25) node[xi] {} -- (0,1) node[xies] {};} % 2
\DeclareSymbol{cXi2}{-2}{\draw (0,-0.25) node[xi] {} -- (-1,1) node[xic] {};}%3
\DeclareSymbol{Xi3}{0}{\draw (0,0) node[xi] {} -- (-1,1) node[xi] {} -- (0,2) node[xi] {};}%4
\DeclareSymbol{XiIIXi}{0}{\draw (0,0) node[xi] {} -- (-1,1); \draw[kernels2] (-1,1) node[not] {} -- (0,2) node[xi] {};}%4

\DeclareSymbol{Xi4}{2}{\draw (0,0) node[xi] {} -- (-1,1) node[xi] {} -- (0,2) node[xi] {} -- (-1,3) node[xi] {};}%5
\DeclareSymbol{Xi4_1}{2}{\draw (0,0) node[xic] {} -- (-1,1) node[xic] {} -- (0,2) node[xi] {} -- (-1,3) node[xi] {};}%6
\DeclareSymbol{Xi4_2}{2}{\draw (0,0) node[xic] {} -- (-1,1) node[xi] {} -- (0,2) node[xi] {} -- (-1,3) node[xic] {};}%7
\DeclareSymbol{Xi2X}{-2}{\draw (0,-0.25) node[xi] {} -- (-1,1) node[xix] {};}%8
\DeclareSymbol{XXi2}{-2}{\draw (0,-0.25) node[xix] {} -- (-1,1) node[xi] {};}%9
\DeclareSymbol{IIXi}{0}{\draw (0,-0.25) node[not] {} -- (-1,1) node[xi] {} -- (0,2) node[xi] {};}%10
\DeclareSymbol{IXi^2}{-1}{\draw (-1,1) node[xi] {} -- (0,0) node[not] {} -- (1,1) node[xi] {};}%11
\DeclareSymbol{IIXi^2}{-4}{\draw (0,-1.5) node[not] {} -- (0,0);
	\draw[kernels2] (-1,1) node[xi] {} -- (0,0) node[not] {} -- (1,1) node[xi] {};}%12
\DeclareSymbol{XiX}{-2.8}{\node[xibx] {};}%13
\DeclareSymbol{tauX}{-2.8}{ \node[X] {};}%14
\DeclareSymbol{Xi}{-2.8}{\node[xib] {};}%15

\DeclareSymbol{IXiX}{-1}{\draw (0,-0.25) node[not] {} -- (-1,1) node[xix] {};}%18
%\DeclareSymbol{Xi3b}{-1}{\draw (-1,1) node[xi] {} -- (0,0) node[xi] {} -- (1,1) node[xi] {};}%19
\DeclareSymbol{IXi3}{2}{\draw (0,-0.25) node[not] {} -- (-1,1) node[xi] {} -- (0,2) node[xi] {} -- (-1,3) node[xi] {};}%20
\DeclareSymbol{IXi}{-2}{\draw (0,-0.25) node[not] {} -- (-1,1) node[xi] {};}%21
\DeclareSymbol{XiI}{-2}{\draw (0,-0.25) node[xi] {} -- (-1,1) node[not] {};}%22

\DeclareSymbol{Xi4b}{0}{\draw(0,1.5) node[xi] {} -- (0,0); \draw (-1,1) node[xi] {} -- (0,0) node[xi] {} -- (1,1) node[xi] {};}%23
\DeclareSymbol{Xi4b'}{0}{\draw(0,1.5) node[xi] {} -- (0,-0.2); \draw (-1,1) node[xi] {} -- (0,-0.2) node[not] {} -- (1,1) node[xi] {};}%24 not used
\DeclareSymbol{Xi4c}{0}{\draw (0,1) -- (0.8,2.2) node[xi] {};\draw (0,-0.25) node[xi] {} -- (0,1) node[xi] {} -- (-0.8,2.2) node[xi] {};}%25
\DeclareSymbol{Xi4d}{-4.5}{\draw (0,-1.5) node[not] {} -- (0,0); \draw (-1,1) node[xi] {} -- (0,0) node[xi] {} -- (1,1) node[xi] {};}%26 not used
\DeclareSymbol{Xi4e}{0}{\draw (0,2) node[xi] {} -- (-1,1) node[xi] {} -- (0,0) node[xi] {} -- (1,1) node[xi] {};}%27
\DeclareSymbol{Xi4e'}{0}{\draw (0,2) node[xi] {} -- (-1,1) node[xi] {} -- (0,-0.2) node[not] {} -- (1,1) node[xi] {};}%28 not used

\DeclareSymbol{Xitwo}%29
{0}{\draw[kernels2] (0,0) node[not] {} -- (-1,1) node[not] {}
	-- (-2,2) node[not]{} -- (-3,3) node[xi]  {};
	\draw[kernels2] (0,0) -- (1,1) node[xi] {};
	\draw[kernels2] (-1,1) -- (0,2) node[xi] {};
	\draw[kernels2] (-2,2) -- (-1,3) node[xi] {};}

\DeclareSymbol{IXitwo}%30
{0}{\draw (-.7,1.2) node[xi] {} -- (0,-0.2) -- (.7,1.2) node[xi] {};}
\DeclareSymbol{I1Xitwo}%30
{0}{\draw[kernels2] (0,0) node[not] {} -- (-1,1) node[xi] {};
	\draw[kernels2] (0,0) -- (1,1) node[xi] {};}

\DeclareSymbol{I1Xitwobis}%30
{0}{\draw[kernels2] (0,0) node[not] {} -- (-1,1) node[xies] {};
	\draw[kernels2] (0,0) -- (1,1) node[xies] {};}

\DeclareSymbol{I1Xitwog}%30
{0}{\draw[kernels2] (0,0) node[not] {} -- (-1,1) node[xies] {};
	\draw[kernels2] (0,0) -- (1,1) node[xi] {};}

\DeclareSymbol{cI1Xitwo}%31
{0}{\draw[kernels2] (0,0) node[not] {} -- (-1,1) node[xic] {};
	\draw[kernels2] (0,0) -- (1,1) node[xi] {};}

\DeclareSymbol{I1IXi3}{0}{\draw (0,0) node[xi] {} -- (-1,1) ; %32
	\draw[kernels2] (-1,1) node[not] {} -- (0,2) node[xi] {};
	\draw[kernels2] (-1,1) node[not] {} -- (-2,2) node[xi] {};}

\DeclareSymbol{I1Xi3c}{-1}{\draw[kernels2](0,1.5) node[xi] {} -- (0,0) node[not] {}; \draw (-1,1) node[xi] {} -- (0,0) ; \draw[kernels2] (0,0) -- (1,1) node[xi] {};}%33

\DeclareSymbol{I1Xi3cbis}{-1}{\draw[kernels2](0,1.5) node[xies] {} -- (0,0) node[not] {}; \draw (-1,1) node[xies] {} -- (0,0) ; \draw[kernels2] (0,0) -- (1,1) node[xies] {};}%33

\DeclareSymbol{I1IXi3b}{0}{\draw[kernels2] (0,0) node[not] {} -- (-1,1) ; \draw[kernels2] (0,0)   -- (1,1) node[xi] {} ;
	\draw (-1,1) node[xi] {} -- (0,2) node[xi] {};
}%34

\DeclareSymbol{I1IXi3c}{0}{\draw[kernels2] (0,0) node[not] {} -- (-1,1) ; \draw[kernels2] (0,0)   -- (1,1) node[xi] {} ;
	\draw[kernels2] (-1,1) node[not] {} -- (0,2) node[xi] {};
	\draw[kernels2] (-1,1) node[not] {} -- (-2,2) node[xi] {};}%35

\DeclareSymbol{I1IXi3cbis}{0}{\draw[kernels2] (0,0) node[not] {} -- (-1,1) ; \draw[kernels2] (0,0)   -- (1,1) node[xies] {} ;
	\draw[kernels2] (-1,1) node[not] {} -- (0,2) node[xies] {};
	\draw[kernels2] (-1,1) node[not] {} -- (-2,2) node[xies] {};}%35

\DeclareSymbol{I1Xi}{0}{\draw[kernels2] (0,0) node[not] {} -- (-1,1)  node[xi] {} ;}%36

\DeclareSymbol{I1Xi4a}{2}{\draw[kernels2] (0,0) node[not] {} -- (-1,1) ; \draw[kernels2] (0,0) node[not] {} -- (1,1) node[xi] {} ;%37
	\draw (-1,1) node[xi] {} -- (0,2) node[xi] {} -- (-1,3) node[xi] {};}
%\DeclareSymbol{Xi4}{2}{\draw (0,0) node[xi] {} -- (-1,1) node[xi] {} -- (0,2) node[xi] {} -- (-1,3) node[xi] {};}%38

\DeclareSymbol{cI1Xi4a}{2}{\draw[kernels2] (0,0) node[not] {} -- (-1,1) ; \draw[kernels2] (0,0) node[not] {} -- (1,1) node[xic] {} ;%39
	\draw (-1,1) node[xic] {} -- (0,2) node[xi] {} -- (-1,3) node[xi] {};}
%\DeclareSymbol{Xi4}{2}{\draw (0,0) node[xi] {} -- (-1,1) node[xi] {} -- (0,2) node[xi] {} -- (-1,3) node[xi] {};}%40

\DeclareSymbol{I1Xi4b}{2}{\draw (0,0) node[xi] {} -- (-1,1) node[xi] {} -- (0,2) ; \draw[kernels2] (0,2) node[not] {} -- (-1,3) node[xi] {};\draw[kernels2] (0,2)  -- (1,3) node[xi] {};
}%41

\DeclareSymbol{cI1Xi4b}{2}{\draw (0,0) node[xic] {} -- (-1,1) node[xic] {} -- (0,2) ; \draw[kernels2] (0,2) node[not] {} -- (-1,3) node[xi] {};\draw[kernels2] (0,2)  -- (1,3) node[xi] {};
}%42

\DeclareSymbol{I1Xi4c}{2}{\draw (0,0) node[xi] {} -- (-1,1) node[not] {}; \draw[kernels2] (-1,1) -- (0,2) ; 
	\draw[kernels2] (-1,1) -- (-2,2) node[xi] {} ;
	\draw (0,2) node[xi] {} -- (-1,3) node[xi] {};}%43

\DeclareSymbol{cI1Xi4c}{2}{\draw (0,0) node[xic] {} -- (-1,1) node[not] {}; \draw[kernels2] (-1,1) -- (0,2) ; 
	\draw[kernels2] (-1,1) -- (-2,2) node[xic] {} ;
	\draw (0,2) node[xi] {} -- (-1,3) node[xi] {};}%44

\DeclareSymbol{I1Xi4ab}{2}{\draw[kernels2] (0,0) node[not] {} -- (-1,1) ; \draw[kernels2] (0,0) node[not] {} -- (1,1) node[xi] {};\draw (-1,1) node[xi] {} -- (0,2) ; \draw[kernels2] (0,2) node[not] {} -- (-1,3) node[xi] {};\draw[kernels2] (0,2)  -- (1,3) node[xi] {}; }%45

\DeclareSymbol{cI1Xi4ab}{2}{\draw[kernels2] (0,0) node[not] {} -- (-1,1) ; \draw[kernels2] (0,0) node[not] {} -- (1,1) node[xic] {};\draw (-1,1) node[xic] {} -- (0,2) ; \draw[kernels2] (0,2) node[not] {} -- (-1,3) node[xi] {};\draw[kernels2] (0,2)  -- (1,3) node[xi] {}; }%46

\DeclareSymbol{I1Xi4bc}{2}{\draw (0,0) node[xi] {} -- (-1,1) node[not] {}; \draw[kernels2] (-1,1) -- (0,2) ; %47
	\draw[kernels2] (-1,1) -- (-2,2) node[xi] {} ; \draw[kernels2] (0,2) node[not] {} -- (-1,3) node[xi] {};\draw[kernels2] (0,2)  -- (1,3) node[xi] {};
}

\DeclareSymbol{cI1Xi4bc}{2}{\draw (0,0) node[xic] {} -- (-1,1) node[not] {}; \draw[kernels2] (-1,1) -- (0,2) ; %48
	\draw[kernels2] (-1,1) -- (-2,2) node[xic] {} ; \draw[kernels2] (0,2) node[not] {} -- (-1,3) node[xi] {};\draw[kernels2] (0,2)  -- (1,3) node[xi] {};
}

\DeclareSymbol{I1Xi4abcc1}{2}{\draw[kernels2] (0,0) node[not] {} -- (-1,1) node[not] {}%50
	-- (-2,2) node[not]{} -- (-3,3) node[xic]  {};
	\draw[kernels2] (0,0) -- (1,1) node[xic] {};
	\draw[kernels2] (-1,1) -- (0,2) node[xi] {};
	\draw[kernels2] (-2,2) -- (-1,3) node[xi] {};
}

\DeclareSymbol{I1Xi4abcc1b}{2}{\draw[kernels2] (0,0) node[not] {} -- (-1,1) node[not] {}%50
	-- (-2,2) node[not]{} -- (-3,3) node[xi]  {};
	\draw[kernels2] (0,0) -- (1,1) node[xic] {};
	\draw[kernels2] (-1,1) -- (0,2) node[xic] {};
	\draw[kernels2] (-2,2) -- (-1,3) node[xi] {};
}

\DeclareSymbol{I1Xi4abcc2}{2}{\draw[kernels2] (0,0) node[not] {} -- (-1,1) node[not] {}%51
	-- (-2,2) node[not]{} -- (-3,3) node[xic]  {};
	\draw[kernels2] (0,0) -- (1,1) node[xi] {};
	\draw[kernels2] (-1,1) -- (0,2) node[xi] {};
	\draw[kernels2] (-2,2) -- (-1,3) node[xic] {};
}

\DeclareSymbol{I1Xi4ac}{2}{\draw[kernels2] (0,0) node[not] {} -- (-1,1) ; \draw[kernels2] (0,0) node[not] {} -- (1,1) node[xi] {}; 
	\draw[kernels2] (-1,1) node[not] {} -- (0,2) ; %52
	\draw[kernels2] (-1,1) -- (-2,2) node[xi] {} ;
	\draw (0,2) node[xi] {} -- (-1,3) node[xi] {};}

\DeclareSymbol{cI1Xi4ac}{2}{\draw[kernels2] (0,0) node[not] {} -- (-1,1) ; \draw[kernels2] (0,0) node[not] {} -- (1,1) node[xic] {}; 
	\draw[kernels2] (-1,1) node[not] {} -- (0,2) ; %53
	\draw[kernels2] (-1,1) -- (-2,2) node[xic] {} ;
	\draw (0,2) node[xi] {} -- (-1,3) node[xi] {};}

\DeclareSymbol{I1Xi4acc1}{2}{\draw[kernels2] (0,0) node[not] {} -- (-1,1) ; \draw[kernels2] (0,0) node[not] {} -- (1,1) node[xic] {}; 
	\draw[kernels2] (-1,1) node[not] {} -- (0,2) ; %54
	\draw[kernels2] (-1,1) -- (-2,2) node[xi] {} ;
	\draw (0,2) node[xic] {} -- (-1,3) node[xi] {};}

\DeclareSymbol{I1Xi4acc2}{2}{\draw[kernels2] (0,0) node[not] {} -- (-1,1) ; \draw[kernels2] (0,0) node[not] {} -- (1,1) node[xic] {}; 
	\draw[kernels2] (-1,1) node[not] {} -- (0,2) ; %55
	\draw[kernels2] (-1,1) -- (-2,2) node[xi] {} ;
	\draw (0,2) node[xi] {} -- (-1,3) node[xic] {};}

\DeclareSymbol{2I1Xi4}{2}{\draw[kernels2] (0,0) node[not] {} -- (-1,1) node[not] {};
	\draw[kernels2] (0,0) -- (1,1) node[not] {};%56
	\draw[kernels2] (-1,1) -- (-1.5,2.5) node[xi] {};
	\draw[kernels2] (-1,1) -- (-0.5,2.5) node[xi] {};
	\draw[kernels2] (1,1) -- (0.5,2.5) node[xi] {};
	\draw[kernels2] (1,1) -- (1.5,2.5) node[xi] {};
}

\DeclareSymbol{2I1Xi4dis}{2}{\draw[kernels2] (0,0) node[not] {} -- (-1,1) node[not] {};
	\draw[kernels2] (0,0) -- (1,1) node[not] {};%56
	\draw[kernels2] (-1,1) -- (-1.5,2.5) node[xies] {};
	\draw[kernels2] (-1,1) -- (-0.5,2.5) node[xies] {};
	\draw[kernels2] (1,1) -- (0.5,2.5) node[xies] {};
	\draw[kernels2] (1,1) -- (1.5,2.5) node[xies] {};
}

\DeclareSymbol{2I1Xi4c1}{2}{\draw[kernels2] (0,0) node[not] {} -- (-1,1) node[not] {};%57
	\draw[kernels2] (0,0) -- (1,1) node[not] {};
	\draw[kernels2] (-1,1) -- (-1.5,2.5) node[xic] {};
	\draw[kernels2] (-1,1) -- (-0.5,2.5) node[xi] {};
	\draw[kernels2] (1,1) -- (0.5,2.5) node[xic] {};
	\draw[kernels2] (1,1) -- (1.5,2.5) node[xi] {};
}

\DeclareSymbol{2I1Xi4c2}{2}{\draw[kernels2] (0,0) node[not] {} -- (-1,1) node[not] {};%58
	\draw[kernels2] (0,0) -- (1,1) node[not] {};
	\draw[kernels2] (-1,1) -- (-1.5,2.5) node[xic] {};
	\draw[kernels2] (-1,1) -- (-0.5,2.5) node[xic] {};
	\draw[kernels2] (1,1) -- (0.5,2.5) node[xi] {};
	\draw[kernels2] (1,1) -- (1.5,2.5) node[xi] {};
}

\DeclareSymbol{2I1Xi4b}{2}{\draw[kernels2] (0,0) node[not] {} -- (-1,1) ;%59
	\draw[kernels2] (0,0) -- (1,1);
	\draw (-1,1) node[xi] {} -- (-1,2.5) node[xi] {};
	\draw (1,1)  node[xi] {} -- (1,2.5) node[xi] {};
}

\DeclareSymbol{2I1Xi4bb}{2}{\draw[kernels2] (0,0) node[not] {} -- (-1,1) ;%59
	\draw[kernels2] (0,0) -- (1,1);
	\draw (-1,1) node[xi] {} -- (-1,2.5) node[xiesf] {};
	\draw (1,1)  node[xi] {} -- (1,2.5) node[xic] {};
}

\DeclareSymbol{2I1Xi4c}{2}{\draw[kernels2] (0,0) node[not] {} -- (-1,1);%60
	\draw[kernels2] (0,0) -- (1,1) node[not] {};
	\draw (-1,1)  node[xi] {} -- (-1,2.5) node[xi] {};
	\draw[kernels2] (1,1) -- (0.4,2.5) node[xi] {};
	\draw[kernels2] (1,1) -- (1.6,2.5) node[xi] {};
}

\DeclareSymbol{2I1Xi4cc1}{2}{\draw[kernels2] (0,0) node[not] {} -- (-1,1);%61
	\draw[kernels2] (0,0) -- (1,1) node[not] {};
	\draw (-1,1)  node[xic] {} -- (-1,2.5) node[xi] {};
	\draw[kernels2] (1,1) -- (0.4,2.5) node[xic] {};
	\draw[kernels2] (1,1) -- (1.6,2.5) node[xi] {};
}

\DeclareSymbol{2I1Xi4cc2}{2}{\draw[kernels2] (0,0) node[not] {} -- (-1,1);%62
	\draw[kernels2] (0,0) -- (1,1) node[not] {};
	\draw (-1,1)  node[xic] {} -- (-1,2.5) node[xic] {};
	\draw[kernels2] (1,1) -- (0.4,2.5) node[xi] {};
	\draw[kernels2] (1,1) -- (1.6,2.5) node[xi] {};
}

\DeclareSymbol{Xi4ba}{0}{\draw(-0.5,1.5) node[xi] {} -- (0,0); \draw (-1.5,1) node[xi] {} -- (0,0) node[not] {}; \draw[kernels2] (0,0) -- (1.5,1) node[xi] {};
	\draw[kernels2] (0,0) -- (0.5,1.5) node[xi] {} ;}%63

\DeclareSymbol{Xi4badis}{0}{\draw(-0.5,1.5) node[xies] {} -- (0,0); \draw (-1.5,1) node[xies] {} -- (0,0) node[not] {}; \draw[kernels2] (0,0) -- (1.5,1) node[xies] {};
	\draw[kernels2] (0,0) -- (0.5,1.5) node[xies] {} ;}%63

\DeclareSymbol{Xi4ba1}{0}{\draw(-0.5,1.5) node[xi] {} -- (0,0); \draw (-1.5,1) node[xi] {} -- (0,0) node[not] {}; \draw[kernels2] (0,0) -- (1.5,1) node[xic] {};
	\draw[kernels2] (0,0) -- (0.5,1.5) node[xic] {} ;}%64

\DeclareSymbol{Xi4ba1b}{0}{\draw(-0.5,1.5) node[xic] {} -- (0,0); \draw (-1.5,1) node[xic] {} -- (0,0) node[not] {}; \draw[kernels2] (0,0) -- (1.5,1) node[xi] {};
	\draw[kernels2] (0,0) -- (0.5,1.5) node[xi] {} ;}%64

\DeclareSymbol{Xi4ba1bdiff}{0}{\draw(-0.5,1.5) node[xic] {} -- (0,0); \draw (-1.5,1) node[xic] {} -- (0,0) node[not] {}; \draw (0,0) -- (1.5,1) node[xi] {};
	\draw (0,0) -- (0.5,1.5) node[xi] {};
	\draw(0,0) node[diff] {};}%64

\DeclareSymbol{Xi4ba1bb}{0}{\draw(-0.5,1.5) node[xic] {} -- (0,0); \draw (-1.5,1) node[xiesf] {} -- (0,0) node[not] {}; \draw[kernels2] (0,0) -- (1.5,1) node[xi] {};
	\draw[kernels2] (0,0) -- (0.5,1.5) node[xi] {} ;}%64

\DeclareSymbol{Xi4ba2}{0}{\draw(-0.5,1.5) node[xi] {} -- (0,0); \draw (-1.5,1) node[xic] {} -- (0,0) node[not] {}; \draw[kernels2] (0,0) -- (1.5,1) node[xi] {};
	\draw[kernels2] (0,0) -- (0.5,1.5) node[xic] {} ;}%65

\DeclareSymbol{Xi4ba2b}{0}{\draw(-0.5,1.5) node[xi] {} -- (0,0); \draw (-1.5,1) node[xic] {} -- (0,0) node[not] {}; \draw[kernels2] (0,0) -- (1.5,1) node[xi] {};
	\draw[kernels2] (0,0) -- (0.5,1.5) node[xiesf] {} ;}%65

%\DeclareSymbol{Xi4c}{0}{\draw (0,1) -- (0.8,2.2) node[xi] {};\draw (0,-0.25) node[xi] {} -- (0,1) node[xi] {} -- (-0.8,2.2) node[xi] {};}%66

\DeclareSymbol{Xi4ca}{0}{\draw (0,1) -- (-1,2.2) node[xi] {};\draw (0,-0.25) node[xi] {} -- (0,1) ; \draw[kernels2] (0,1) node[not] {} -- (1,2.2) node[xi] {};%67
	\draw[kernels2] (0,1) {} -- (0,2.7) node[xi] {};
}

\DeclareSymbol{Xi4cb}{0}{\draw (-1,1) -- (-2,2) node[xi] {};\draw[kernels2] (0,0)  -- (-1,1) node[xi] {} ; \draw[kernels2] (0,0) node[not] {} -- (1,1) node[xi] {} ; 
	\draw (-1,1) node[xi] {} -- (0,2) node[xi] {};}

\DeclareSymbol{Xi4cbb}{0}{\draw (-1,1) -- (-2,2) node[xiesf] {};\draw[kernels2] (0,0)  -- (-1,1) node[xi] {} ; \draw[kernels2] (0,0) node[not] {} -- (1,1) node[xi] {} ; 
	\draw (-1,1) node[xi] {} -- (0,2) node[xic] {};}

\DeclareSymbol{Xi4cbc1}{0}{\draw (-1,1) -- (-2,2) node[xic] {};\draw[kernels2] (0,0)  -- (-1,1) node[xic] {} ; \draw[kernels2] (0,0) node[not] {} -- (1,1) node[xi] {} ; 
	\draw (-1,1) node[xic] {} -- (0,2) node[xi] {};}

\DeclareSymbol{Xi4cbc2}{0}{\draw (-1,1) -- (-2,2) node[xi] {};\draw[kernels2] (0,0)  -- (-1,1) node[xi] {} ; \draw[kernels2] (0,0) node[not] {} -- (1,1) node[xic] {} ; 
	\draw (-1,1) node[xic] {} -- (0,2) node[xi] {};}

\DeclareSymbol{Xi4cab}{0}{\draw (-1,1) -- (-2,2) node[xi] {};\draw[kernels2] (0,0)  -- (-1,1); \draw[kernels2] (0,0) node[not] {} -- (1,1) node[xi] {} ; 
	\draw[kernels2] (-1,1)  {} -- (0,2) node[xi] {};
	\draw[kernels2] (-1,1) node[not] {} -- (-1,2.5) node[xi] {};
}%69

\DeclareSymbol{Xi4cabdis}{0}{\draw (-1,1) -- (-2,2) node[xies] {};\draw[kernels2] (0,0)  -- (-1,1); \draw[kernels2] (0,0) node[not] {} -- (1,1) node[xies] {} ; 
	\draw[kernels2] (-1,1)  {} -- (0,2) node[xies] {};
	\draw[kernels2] (-1,1) node[not] {} -- (-1,2.5) node[xies] {};
}%69

\DeclareSymbol{Xi4cabc1}{0}{\draw (-1,1) -- (-2,2) node[xi] {};\draw[kernels2] (0,0)  -- (-1,1); \draw[kernels2] (0,0) node[not] {} -- (1,1) node[xic] {} ; 
	\draw[kernels2] (-1,1)  {} -- (0,2) node[xic] {};
	\draw[kernels2] (-1,1) node[not] {} -- (-1,2.5) node[xi] {};
}%69

\DeclareSymbol{Xi4cabc2}{0}{\draw (-1,1) -- (-2,2) node[xic] {};\draw[kernels2] (0,0)  -- (-1,1); \draw[kernels2] (0,0) node[not] {} -- (1,1) node[xic] {} ; 
	\draw[kernels2] (-1,1)  {} -- (0,2) node[xi] {};
	\draw[kernels2] (-1,1) node[not] {} -- (-1,2.5) node[xi] {};
}%69

\DeclareSymbol{Xi4ea}{1.5}{\draw (-1,2.5) node[xi] {} -- (-1,1) node[xi] {} -- (0,0); %71
	\draw[kernels2] (0,0)  -- (1,1) node[xi] {};
	\draw[kernels2] (0,0) node[not] {} -- (0,1.5) node[xi] {}; }

\DeclareSymbol{Xi4eac1}{1.5}{\draw (-1,2.5) node[xic] {} -- (-1,1) node[xi] {} -- (0,0); %72
	\draw[kernels2] (0,0)  -- (1,1) node[xic] {};
	\draw[kernels2] (0,0) node[not] {} -- (0,1.5) node[xi] {}; }

\DeclareSymbol{Xi4eac1b}{1.5}{\draw (-1,2.5) node[xic] {} -- (-1,1) node[xi] {} -- (0,0); %72
	\draw[kernels2] (0,0)  -- (1,1) node[xiesf] {};
	\draw[kernels2] (0,0) node[not] {} -- (0,1.5) node[xi] {}; }

\DeclareSymbol{Xi4eac2}{1.5}{\draw (-1,2.5) node[xic] {} -- (-1,1) node[xic] {} -- (0,0); %73
	\draw[kernels2] (0,0)  -- (1,1) node[xi] {};
	\draw[kernels2] (0,0) node[not] {} -- (0,1.5) node[xi] {}; }

\DeclareSymbol{Xi4eact1}{1.5}{\draw (-1,2.5) node[xic] {} -- (-1,1) node[xi] {} -- (0,0); %74
	\draw (0,0)  -- (1,1) node[xic] {};
	\draw[rho] (0,0) node[not] {} -- (0,1.5) node[xi] {}; }

\DeclareSymbol{Xi4eact2}{1.5}{\draw[rho] (-1,2.5) node[xic] {} -- (-1,1) node[xi] {} -- (0,0); %75
	\draw (0,0)  -- (1,1) node[xic] {};
	\draw (0,0) node[not] {} -- (0,1.5) node[xi] {}; }

\DeclareSymbol{Xi4eabis}{1.5}{\draw (-1,2.5) node[xi] {} -- (-1,1) ; \draw[kernels2] (-1,1) node[xi] {} -- (0,0); 
	\draw (0,0)  -- (1,1) node[xi] {};%76
	\draw[kernels2] (0,0) node[not] {} -- (0,1.5) node[xi] {}; }

\DeclareSymbol{Xi4eabisc1}{1.5}{\draw (-1,2.5) node[xic] {} -- (-1,1) ; \draw[kernels2] (-1,1) node[xi] {} -- (0,0); 
	\draw (0,0)  -- (1,1) node[xi] {};%77
	\draw[kernels2] (0,0) node[not] {} -- (0,1.5) node[xic] {}; }

\DeclareSymbol{Xi4eabisc1b}{1.5}{\draw (-1,2.5) node[xic] {} -- (-1,1) ; \draw[kernels2] (-1,1) node[xi] {} -- (0,0); 
	\draw (0,0)  -- (1,1) node[xi] {};%77
	\draw[kernels2] (0,0) node[not] {} -- (0,1.5) node[xiesf] {}; }

\DeclareSymbol{Xi4eabisc1bis}{1.5}{\draw (-1,2.5) node[xi] {} -- (-1,1) ; \draw[kernels2] (-1,1) node[xi] {} -- (0,0); 
	\draw (0,0)  -- (1,1) node[xi] {};%78
	\draw[kernels2] (0,0) node[not] {} -- (0,1.5) node[xi] {};
	\draw (-2,1) node[] {\tiny{$i$}};
	\draw (-2,2.5) node[] {\tiny{$\ell$}};
	\draw (2,1) node[] {\tiny{$k$}};
	\draw (0,2.5) node[] {\tiny{$j$}};
}

\DeclareSymbol{Xi4eabisc1tris}{1.5}{\draw (-1,2.5) node[xi] {} -- (-1,1) ; \draw[kernels2] (-1,1) node[xi] {} -- (0,0); 
	\draw (0,0)  -- (1,1) node[xi] {};%78
	\draw[kernels2] (0,0) node[not] {} -- (0,1.5) node[xi] {};
	\draw (-2,1) node[] {\tiny{i}};
	\draw (-2,2.5) node[] {\tiny{j}};
	\draw (2,1) node[] {\tiny{j}};
	\draw (0,2.5) node[] {\tiny{i}};
}

\DeclareSymbol{Xi4eabisc1quater}{1.5}{\draw (-1,2.5) node[xic] {} -- (-1,1) ; \draw[kernels2] (-1,1) node[xi] {} -- (0,0); 
	\draw (0,0)  -- (1,1) node[xic] {};%78
	\draw[kernels2] (0,0) node[not] {} -- (0,1.5) node[xi] {};
}

\DeclareSymbol{Xi4eabisc2}{1.5}{\draw (-1,2.5) node[xic] {} -- (-1,1) ; \draw[kernels2] (-1,1) node[xi] {} -- (0,0); %79
	\draw (0,0)  -- (1,1) node[xic] {};
	\draw[kernels2] (0,0) node[not] {} -- (0,1.5) node[xi] {}; }

\DeclareSymbol{Xi4eabisc2l}{1.5}{\draw (-1,2.5) node[xiesf] {} -- (-1,1) ; \draw[kernels2] (-1,1) node[xi] {} -- (0,0); %79
	\draw (0,0)  -- (1,1) node[xic] {};
	\draw[kernels2] (0,0) node[not] {} -- (0,1.5) node[xi] {}; }

\DeclareSymbol{Xi4eabisc2r}{1.5}{\draw (-1,2.5) node[xic] {} -- (-1,1) ; \draw[kernels2] (-1,1) node[xi] {} -- (0,0); %79
	\draw (0,0)  -- (1,1) node[xiesf] {};
	\draw[kernels2] (0,0) node[not] {} -- (0,1.5) node[xi] {}; }

\DeclareSymbol{Xi4eabisc3}{1.5}{\draw (-1,2.5) node[xic] {} -- (-1,1) ; \draw[kernels2] (-1,1) node[xic] {} -- (0,0); %80
	\draw (0,0)  -- (1,1) node[xi] {};
	\draw[kernels2] (0,0) node[not] {} -- (0,1.5) node[xi] {}; }

\DeclareSymbol{Xi4eb}{0}{%81
	\draw[kernels2] (0,2) node[xi] {} -- (-1,1) ; \draw[kernels2] (-2,2)  node[xi] {} -- (-1,1) ; \draw (-1,1)  node[not] {} -- (0,0); 
	\draw (0,0) node[xi] {}  -- (1,1) node[xi] {};
}

\DeclareSymbol{Xi4eab}{1.5}{\draw[kernels2] (-1,2.5) node[xi] {} -- (-1,1) ; \draw[kernels2] (-2,2)  node[xi] {} -- (-1,1) ; \draw (-1,1)  node[not] {} -- (0,0); %82
	\draw[kernels2] (0,0)  -- (1,1) node[xi] {};
	\draw[kernels2] (0,0) node[not] {} -- (0,1.5) node[xi] {}; 
}

\DeclareSymbol{Xi4eabdis}{1.5}{\draw[kernels2] (-1,2.5) node[xies] {} -- (-1,1) ; \draw[kernels2] (-2,2)  node[xies] {} -- (-1,1) ; \draw (-1,1)  node[not] {} -- (0,0); %82
	\draw[kernels2] (0,0)  -- (1,1) node[xies] {};
	\draw[kernels2] (0,0) node[not] {} -- (0,1.5) node[xies] {}; 
}

\DeclareSymbol{Xi4eabc1}{1.5}{\draw[kernels2] (-1,2.5) node[xic] {} -- (-1,1) ; \draw[kernels2] (-2,2)  node[xi] {} -- (-1,1) ; \draw (-1,1)  node[not] {} -- (0,0); %83
	\draw[kernels2] (0,0)  -- (1,1) node[xic] {};
	\draw[kernels2] (0,0) node[not] {} -- (0,1.5) node[xi] {}; 
}

\DeclareSymbol{Xi4eabc2}{1.5}{\draw[kernels2] (-1,2.5) node[xi] {} -- (-1,1) ; \draw[kernels2] (-2,2)  node[xi] {} -- (-1,1) ; \draw (-1,1)  node[not] {} -- (0,0); %84
	\draw[kernels2] (0,0)  -- (1,1) node[xic] {};
	\draw[kernels2] (0,0) node[not] {} -- (0,1.5) node[xic] {}; 
}

\DeclareSymbol{Xi4eabbis}{1.5}{\draw[kernels2] (-1,2.5) node[xi] {} -- (-1,1) ; \draw[kernels2] (-2,2)  node[xi] {} -- (-1,1) ; \draw[kernels2] (-1,1)  node[not] {} -- (0,0); %85
	\draw (0,0)  -- (1,1) node[xi] {};
	\draw[kernels2] (0,0) node[not] {} -- (0,1.5) node[xi] {}; 
}

\DeclareSymbol{Xi4eabbisc1}{1.5}{\draw[kernels2] (-1,2.5) node[xic] {} -- (-1,1) ; \draw[kernels2] (-2,2)  node[xi] {} -- (-1,1) ; \draw[kernels2] (-1,1)  node[not] {} -- (0,0); %86
	\draw (0,0)  -- (1,1) node[xic] {};
	\draw[kernels2] (0,0) node[not] {} -- (0,1.5) node[xi] {}; 
}

\DeclareSymbol{Xi4eabbisc1perm}{1.5}{\draw[kernels2] (-1,2.5) node[xi] {} -- (-1,1) ; \draw[kernels2] (-2,2)  node[xic] {} -- (-1,1) ; \draw[kernels2] (-1,1)  node[not] {} -- (0,0); %86
	\draw (0,0)  -- (1,1) node[xic] {};
	\draw[kernels2] (0,0) node[not] {} -- (0,1.5) node[xi] {}; 
}

\DeclareSymbol{Xi4eabbisc2}{1.5}{\draw[kernels2] (-1,2.5) node[xi] {} -- (-1,1) ; \draw[kernels2] (-2,2)  node[xi] {} -- (-1,1) ; \draw[kernels2] (-1,1)  node[not] {} -- (0,0); %87
	\draw (0,0)  -- (1,1) node[xic] {};
	\draw[kernels2] (0,0) node[not] {} -- (0,1.5) node[xic] {}; 
}

\DeclareSymbol{Xi2cbis}{0}{\draw[kernels2] (0,1) -- (0.8,2.2) node[xi] {};\draw[kernels2] (0,-0.25) node[not] {} -- (0,1); \draw[kernels2] (0,1) node[not] {} -- (-0.8,2.2) node[xi] {};}%88

\DeclareSymbol{Xi2cbis1}{0}{\draw (0,1) -- (-0.8,2.2) node[xi] {};\draw[kernels2] (0,-0.25) node[not] {} -- (0,1) node[xi] {}; }%89

%\DeclareSymbol{Xi2X}{-2}{\draw (0,-0.25) node[xi] {} -- (-1,1) node[xix] {};}%90

\DeclareSymbol{Xi2Xbis}{-2}{\draw[kernels2] (0,-0.25)  -- (-1,1) ; \draw (-1,1) node[xix] {};%91
	\draw[kernels2] (0,-0.25) node[not] {} -- (1,1) node[xi] {};}

\DeclareSymbol{XXi2bis}{-2}{\draw[kernels2] (0,-0.25) -- (-1,1) node[xi] {};%92
	\draw[kernels2] (0,-0.25) node[X] {} -- (1,1) node[xi] {};}

\DeclareSymbol{I1XiIXi}{0}{\draw[kernels2] (0,-0.25) -- (1,1) node[xi] {};%93
	\draw (0,-0.25) node[not] {} -- (-1,1) node[xi] {};}

\DeclareSymbol{I1XiIXib}{0}{\draw  (0,-0.25) node[xi] {} -- (0,1) node[not] {};
	\draw[kernels2] (0,1) -- (0,2.25) ; \draw (0,2.25) node[xi]{}; }%94

\DeclareSymbol{I1XiIXic}{0}{%95
	\draw[kernels2] (0,0) -- (1,1) node[xi] {} ; 
	\draw[kernels2] (0,0) node[not] {}  -- (-1,1) node[not] {} -- (0,2) node[xi] {};
}

\DeclareSymbol{thin}{1.4}{\draw[pagebackground] (-0.3,0) -- (0.3,0); \draw  (0,0) -- (0,2);}
\DeclareSymbol{thin2}{1.4}{\draw[pagebackground] (-0.3,0) -- (0.3,0); \draw[tinydots]  (0,0) -- (0,2);}

\DeclareSymbol{thick}{1.4}{\draw[pagebackground] (-0.3,0) -- (0.3,0); \draw[kernels2]  (0,0) -- (0,2);}

\DeclareSymbol{thick2}{1.4}{\draw[pagebackground] (-0.3,0) -- (0.3,0); \draw[kernels2,tinydots]  (0,0) -- (0,2);}

\DeclareSymbol{Xi4ind}{2}{\draw (0,0) node[xi,label={[label distance=-0.2em]right: \scriptsize  $ i $}]  { } -- (-1,1) node[xi,label={[label distance=-0.2em]left: \scriptsize  $ j $}] {} -- (0,2) node[xi,label={[label distance=-0.2em]right: \scriptsize  $ k $}] {} -- (-1,3) node[xi,label={[label distance=-0.2em]left: \scriptsize  $ \ell $}] {};}%115

\DeclareSymbol{Xi4c1}{2}{\draw (0,0) node[xic] {} -- (-1,1) node[xi] {} -- (0,2) node[xic] {} -- (-1,3) node[xi] {};} %119
\DeclareSymbol{IXi2ex}{0}{\draw (0,-0.25) node[xie] {} -- (-1,1) node[xi] {} ; \draw (0,-0.25)-- (1,1) node[xi] {};}
\DeclareSymbol{IXi2ex1}{0}{\draw (0,-0.25) node[xie] {} -- (-1,1) node[xi] {} -- (0,2) node[xi] {};}

\DeclareSymbol{Xi4b1}{0}{\draw(0,1.5) node[xic] {} -- (0,0); \draw (-1,1) node[xic] {} -- (0,0) node[xi] {} -- (1,1) node[xi] {};}

\DeclareSymbol{Xi4ec1}{0}{\draw (0,2) node[xi] {} -- (-1,1) node[xic] {} -- (0,0) node[xic] {} -- (1,1) node[xi] {};}
\DeclareSymbol{Xi4ec2}{0}{\draw (0,2) node[xic] {} -- (-1,1) node[xi] {} -- (0,0) node[xic] {} -- (1,1) node[xi] {};}
\DeclareSymbol{Xi4ec3}{0}{\draw (0,2) node[xic] {} -- (-1,1) node[xic] {} -- (0,0) node[xi] {} -- (1,1) node[xi] {};}

\DeclareSymbol{I1Xi4ac1}{2}{\draw[kernels2] (0,0) node[not] {} -- (-1,1) ; \draw[kernels2] (0,0) node[not] {} -- (1,1) node[xic] {} ;
	\draw (-1,1) node[xi] {} -- (0,2) node[xic] {} -- (-1,3) node[xi] {};}%161

\DeclareSymbol{I1Xi4ac2}{2}{\draw[kernels2] (0,0) node[not] {} -- (-1,1) ; \draw[kernels2] (0,0) node[not] {} -- (1,1) node[xic] {} ;
	\draw (-1,1) node[xi] {} -- (0,2) node[xi] {} -- (-1,3) node[xic] {};}

\DeclareSymbol{I1Xi4bp}{2}{\draw (0,0) node[not] {} -- (-1,1) node[xi] {} -- (0,2) ; \draw[kernels2] (0,2) node[not] {} -- (-1,3) node[xi] {};\draw[kernels2] (0,2)  -- (1,3) node[xi] {};
}

\DeclareSymbol{I1Xi4bc1}{2}{\draw (0,0) node[xic] {} -- (-1,1) node[xi] {} -- (0,2) ; \draw[kernels2] (0,2) node[not] {} -- (-1,3) node[xi] {};\draw[kernels2] (0,2)  -- (1,3) node[xic] {};
}%165

\DeclareSymbol{I1Xi4bc2}{2}{\draw (0,0) node[xic] {} -- (-1,1) node[xi] {} -- (0,2) ; \draw[kernels2] (0,2) node[not] {} -- (-1,3) node[xic] {};\draw[kernels2] (0,2)  -- (1,3) node[xi] {};
}

\DeclareSymbol{I1Xi4cp}{2}{\draw (0,0) node[not] {} -- (-1,1) node[not] {}; \draw[kernels2] (-1,1) -- (0,2) ; 
	\draw[kernels2] (-1,1) -- (-2,2) node[xi] {} ;
	\draw (0,2) node[xi] {} -- (-1,3) node[xi] {};}

\DeclareSymbol{I1Xi4cc1}{2}{\draw (0,0) node[xic] {} -- (-1,1) node[not] {}; \draw[kernels2] (-1,1) -- (0,2) ; 
	\draw[kernels2] (-1,1) -- (-2,2) node[xi] {} ;
	\draw (0,2) node[xic] {} -- (-1,3) node[xi] {};}%169

\DeclareSymbol{I1Xi4cc2}{2}{\draw (0,0) node[xic] {} -- (-1,1) node[not] {}; \draw[kernels2] (-1,1) -- (0,2) ; 
	\draw[kernels2] (-1,1) -- (-2,2) node[xi] {} ;
	\draw (0,2) node[xi] {} -- (-1,3) node[xic] {};}

\DeclareSymbol{I1Xi4abc1}{2}{\draw[kernels2] (0,0) node[not] {} -- (-1,1) ; \draw[kernels2] (0,0) node[not] {} -- (1,1) node[xic] {};\draw (-1,1) node[xi] {} -- (0,2) ; \draw[kernels2] (0,2) node[not] {} -- (-1,3) node[xic] {};\draw[kernels2] (0,2)  -- (1,3) node[xi] {}; }

\DeclareSymbol{I1Xi4abc2}{2}{\draw[kernels2] (0,0) node[not] {} -- (-1,1) ; \draw[kernels2] (0,0) node[not] {} -- (1,1) node[xic] {};\draw (-1,1) node[xi] {} -- (0,2) ; \draw[kernels2] (0,2) node[not] {} -- (-1,3) node[xi] {};\draw[kernels2] (0,2)  -- (1,3) node[xic] {}; }%173

\DeclareSymbol{R1}{0}{\draw (-1,1) node[xi] {} -- (0,0) node[not] {};%131
	\draw[kernels2] (0,1.5) node[xic] {} -- (0,0) -- (1,1) node[xic] {};}
\DeclareSymbol{R2}{0}{\draw (-1,1) node[xic] {} -- (0,0) node[not] {};
	\draw[kernels2] (0,1.5)  {} -- (0,0) -- (1,1)  {};
	\draw (0,1.5) node[xi] {};
	\draw (1,1) node[xic] {};
}%132
\DeclareSymbol{R3}{1}{\draw[kernels2] (-1,1.5)  {} -- (0,0) node[not] {} -- (1,1.5);%133
	\draw (-1,1.5) node[xi] {};
	\draw[kernels2] (0,3) {} -- (1,1.5) -- (2,3)  {};
	\draw  (0,3) node[xic] {} ;
	\draw (2,3) node[xic] {};}
\DeclareSymbol{R4}{1}{\draw[kernels2] (-1,1.5) node[xic] {} -- (0,0) node[not] {} -- (1,1.5);%134
	\draw[kernels2] (0,3) {} -- (1,1.5) -- (2,3) node[xic] {};
	\draw (0,3) node[xi] {};}

\DeclareSymbol{I1Xi4bcp}{2}{\draw (0,0) node[not] {} -- (-1,1) node[not] {}; \draw[kernels2] (-1,1) -- (0,2) ; %175
	\draw[kernels2] (-1,1) -- (-2,2) node[xi] {} ; \draw[kernels2] (0,2) node[not] {} -- (-1,3) node[xi] {};\draw[kernels2] (0,2)  -- (1,3) node[xi] {};
}

\DeclareSymbol{I1Xi4bcc1}{2}{\draw (0,0) node[xic] {} -- (-1,1) node[not] {}; \draw[kernels2] (-1,1) -- (0,2) ; 
	\draw[kernels2] (-1,1) -- (-2,2) node[xi] {} ; \draw[kernels2] (0,2) node[not] {} -- (-1,3) node[xi] {};\draw[kernels2] (0,2)  -- (1,3) node[xic] {};
}

\DeclareSymbol{I1Xi4bcc2}{2}{\draw (0,0) node[xic] {} -- (-1,1) node[not] {}; \draw[kernels2] (-1,1) -- (0,2) ; 
	\draw[kernels2] (-1,1) -- (-2,2) node[xi] {} ; \draw[kernels2] (0,2) node[not] {} -- (-1,3) node[xic] {};\draw[kernels2] (0,2)  -- (1,3) node[xi] {};
} %177

\DeclareSymbol{2I1Xi4bc1}{2}{\draw[kernels2] (0,0) node[not] {} -- (-1,1) ;
	\draw[kernels2] (0,0) -- (1,1);
	\draw (-1,1) node[xic] {} -- (-1,2.5) node[xi] {};
	\draw (1,1)  node[xic] {} -- (1,2.5) node[xi] {};
}

\DeclareSymbol{2I1Xi4bc2}{2}{\draw[kernels2] (0,0) node[not] {} -- (-1,1) ;
	\draw[kernels2] (0,0) -- (1,1);
	\draw (-1,1) node[xi] {} -- (-1,2.5) node[xic] {};
	\draw (1,1)  node[xic] {} -- (1,2.5) node[xi] {};
}

\DeclareSymbol{diff2I1Xi4bc2}{2}{\draw (0,0) node[diff] {} -- (-1,1) ;
	\draw (0,0) -- (1,1);
	\draw (-1,1) node[xi] {} -- (-1,2.5) node[xic] {};
	\draw (1,1)  node[xic] {} -- (1,2.5) node[xi] {};
}

\DeclareSymbol{2I1Xi4bc3}{2}{\draw[kernels2] (0,0) node[not] {} -- (-1,1) ;
	\draw[kernels2] (0,0) -- (1,1);
	\draw (-1,1) node[xic] {} -- (-1,2.5) node[xic] {};
	\draw (1,1)  node[xi] {} -- (1,2.5) node[xi] {};
}

\DeclareSymbol{Xi41}{0}{\draw (0,1) -- (0.8,2.2) node[xic] {};\draw (0,-0.25) node[xi] {} -- (0,1) node[xi] {} -- (-0.8,2.2) node[xic] {};} 

\DeclareSymbol{Xi42}{0}{\draw (0,1) -- (0.8,2.2) node[xi] {};\draw (0,-0.25) node[xic] {} -- (0,1) node[xi] {} -- (-0.8,2.2) node[xic] {};}

\DeclareSymbol{Xi4ca1}{0}{\draw (0,1) -- (-1,2.2) node[xic] {};\draw (0,-0.25) node[xi] {} -- (0,1) ; \draw[kernels2] (0,1) node[not] {} -- (1,2.2) node[xic] {};
	\draw[kernels2] (0,1) {} -- (0,2.7) node[xi] {};
}

\DeclareSymbol{Xi4ca2}{0}{\draw (0,1) -- (-1,2.2) node[xi] {};\draw (0,-0.25) node[xi] {} -- (0,1) ; \draw[kernels2] (0,1) node[not] {} -- (1,2.2) node[xic] {};
	\draw[kernels2] (0,1) {} -- (0,2.7) node[xic] {};
}

\DeclareSymbol{Xi4cap}{0}{\draw (0,1) -- (-1,2.2) node[xi] {};\draw (0,-0.25) node[not] {} -- (0,1) ; \draw[kernels2] (0,1) node[not] {} -- (1,2.2) node[xi] {};
	\draw[kernels2] (0,1) {} -- (0,2.7) node[xi] {};
}

\DeclareSymbol{Xi3a}{0}{
	\draw (-1,1)  node[xi] {} -- (0,0); 
	\draw (0,0) node[xi] {}  -- (1,1) node[xi] {};
}

\DeclareSymbol{Xi4ebc1}{0}{
	\draw[kernels2] (0,2) node[xi] {} -- (-1,1) ; \draw[kernels2] (-2,2)  node[xic] {} -- (-1,1) ; \draw (-1,1)  node[not] {} -- (0,0); 
	\draw (0,0) node[xic] {}  -- (1,1) node[xi] {};
}

\DeclareSymbol{Xi4ebc2}{0}{
	\draw[kernels2] (0,2) node[xi] {} -- (-1,1) ; \draw[kernels2] (-2,2)  node[xi] {} -- (-1,1) ; \draw (-1,1)  node[not] {} -- (0,0); 
	\draw (0,0) node[xic] {}  -- (1,1) node[xic] {};
}

\DeclareSymbol{Xi2cbispex}{0}{\draw[kernels2] (0,1) -- (0.8,2.2) node[xi] {};\draw (0,-0.25) node[xie] {} -- (0,1); \draw[kernels2] (0,1) node[not] {} -- (-0.8,2.2) node[xi] {};}

\DeclareSymbol{Xi2cbis1p}{0}{\draw (0,1) -- (-0.8,2.2) node[xi] {};\draw (0,-0.25) node[not] {} -- (0,1) node[xi] {}; }

\DeclareSymbol{Xi2Xp}{-2}{\draw (0,-0.25) node[not] {} -- (-1,1) node[xix] {};} % 229 not used

\DeclareSymbol{I1XiIXib}{0}{\draw  (0,-0.25) node[xi] {} -- (0,1) node[not] {};
	\draw[kernels2] (0,1) -- (0,2.25) ; \draw (0,2.25) node[xi]{}; }

\DeclareSymbol{IXi2b}{0}{\draw  (0,-0.25) node[xi] {} -- (0,1) node[not] {};
	\draw (0,1) -- (0,2.25) ; \draw (0,2.25) node[xi]{}; }

\DeclareSymbol{IXi2bex}{0}{\draw  (0,-0.25) node[xi] {} -- (0,1) node[xie] {};
	\draw (0,1) -- (0,2.25) ; \draw (0,2.25) node[xi]{}; }

\def\1{\mathbf{\symbol{1}}}

\DeclareSymbol{diff}{0}{
	\draw (0,0.5) node[diff] {};
}

\DeclareSymbol{diff1}{0}{
	\draw (0,0.5) node[diff1] {};
}

\DeclareSymbol{diff2}{0}{
	\draw (0,0.5) node[diff2] {};
}

\DeclareSymbol{geo}{0}{
	\draw (0,0) node[diff] {};
	\draw (0.3,0) node[diff] {};
}

\DeclareSymbol{generic}{0}{
	\draw (0,0.6) node[xi] {};
}

\DeclareSymbol{g}{0}{
	\draw (0,0.6) node[g] {};
}

\DeclareSymbol{Ito}{0}{
	\draw (0,0.6) node[xies] {};
}

\DeclareSymbol{Itob}{0}{
	\draw (0,0.6) node[xiesf] {};
}

\DeclareSymbol{greycirc}{0}{
	\draw (0,0.3) node[xi] {};
}

\DeclareSymbol{not}{0}{
	\draw (0,0.6) node[not] {};
	\draw[tinydots] (0,0.6) circle (0.8);
}

\DeclareSymbol{genericb}{0}{
	\draw (0,0.6) node[xic] {};
}

\DeclareSymbol{bluecirc}{0}{
	\draw (0,0.3) node[xic] {};
}

\DeclareSymbol{genericxix}{0}{
	\draw (0,0.6) node[xix] {};
}

\DeclareSymbol{genericX}{0}{
	\draw (0,0.6) node[X] {};
}

\DeclareSymbol{diffIto}{1}{
	\draw  (0,2.5) -- (0,0) ;
	\draw (0,-0.1) node[diff] {};
	\draw (0,2.5) node[xies] {};
}
\DeclareSymbol{Itodiff}{2}{
	\draw(0,2.9) -- (0,-0.2);
	\draw (0,2.9) node[diff] {};
	\draw (0,-0.1) node[xies] {};
}

\DeclareSymbol{diffgeneric}{1}{
	\draw  (0,2.5) -- (0,0) ;
	\draw (0,-0.1) node[diff] {};
	\draw (0,2.5) node[xi] {};
}

\DeclareSymbol{genericdiff}{2}{
	\draw(0,2.9) -- (0,-0.2);
	\draw (0,2.9) node[diff] {};
	\draw (0,-0.1) node[xi] {};
}

\DeclareSymbol{diffdot}{2}{
	\draw  (0,3) -- (0,-0.1) ;
	\draw (0,3) node[not] {};
	\draw (0,-0.1) node[diff] {};
}

\DeclareSymbol{diffdotmini}{0}{
	\draw  (0,0) -- (0,1.2) ;
	\draw (0,1.2) node[not] {};
	\draw (0,0) node[diffmini] {};
}

\DeclareSymbol{dotdiff}{2}{
	\draw[kernelsmod]  (0,3) -- (0,-0.1) ;
	\draw (0,3) node[diff] {};
	\draw (0,-0.1) node[not] {};
}

\DeclareSymbol{dotdiff1}{2}{
	\draw[kernelsmod]  (0,3) -- (0,-0.1) ;
	\draw (0,3) node[diff1] {};
	\draw (0,-0.1) node[not] {};
}

\DeclareSymbol{dotdiff1mini}{0}{
	\draw[kernelsmod]  (0,1.2) -- (0,0) ;
	\draw (0,1.2) node[diffmini] {};
	\draw (0,0) node[not] {};
}

\DeclareSymbol{dotdiff2}{2}{
	\draw (0,3) -- (0,-0.1) ;
	\draw (0,3) node[diff] {};
	\draw (0,-0.1) node[not] {};
}

\DeclareSymbol{dotdiff2mini}{0}{
	\draw (0,1.2) -- (0,0) ;
	\draw (0,1.2) node[diffmini] {};
	\draw (0,0) node[not] {};
}

\DeclareSymbol{dotdiffstraight}{0}{
	\draw  (0,3) -- (0,-0.1) ;
	\draw (0,3) node[diff] {};
	\draw (0,-0.1) node[not] {};
}

\DeclareSymbol{arbre1}{0}{
	\draw  (0,0) -- (1.5,1.5) ;
	\draw (1.5,1.5) node[not] {};
	\draw (0,0) node[not] {};
}

\DeclareSymbol{arbre2}{0}{
	\draw  (0,0) -- (1.5,1.5) ;
	\draw[kernelsmod] (0,0) -- (-1.5,1.5);
	\draw (1.5,1.5) node[not] {};
	\draw (0,0) node[not] {};
	\draw (-1.5,1.5) node[xi] {};
}

\DeclareSymbol{arbre3}{0}{
	\draw  (0,0) -- (1.5,1.5) ;
	\draw[kernelsmod] (1.5,1.5) -- (0,3);
	\draw (0,0) node[not] {};
	\draw (1.5,1.5) node[not] {};
	\draw (0,3) node[xi] {};
}

\DeclareSymbol{treeeval}{0}{
	\draw (0,0) -- (1,1);
	\draw (0,0) node[xi] {};
	\draw (1.25,1.25) node[xi] {};
	\draw (-0.6,0.6) node[]{\tiny{$i$}};
	\draw (0.65,1.85) node[]{\tiny{$j$}};
}

\DeclareSymbol{testeval}{0}{
	\draw (0,0) -- (1,1);
	\draw (0,0) -- (-1,1);
	\draw (0,0) node[xi] {};
	\draw (1.25,1.25) node[xi] {};
	\draw (-1.25,1.25) node[xi] {};
	\draw (-0.6,-0.6) node[]{\tiny{$i$}};
	\draw (0.65,1.85) node[]{\tiny{$j$}};
	\draw (-1.95,1.85) node[]{\tiny{$k$}};
}

\DeclareSymbol{treeeval2}{0}{
	\draw[kernelsmod] (-0.25,-1) -- (1,0.5) ;
	\draw[kernelsmod] (1,0.5) -- (-0.25,2);
	\draw (1,0.5) node[diff2] {};
	\draw (-0.25,-1) node[not] {};
	\draw (-0.25,2) node[xi] {};
	\draw (-0.6,1.2) node[]{\tiny{1}};
}

\DeclareSymbol{arbreact}{1}{
	\draw (0,0) node[not] {};
	\draw[kernelsmod] (0,0) -- (1,1);
	\draw[kernelsmod] (0,0) -- (-1,1);
	\draw (-1,1) node[xic] {};
	\draw  (0,2) -- (1,1) ;
	\draw (0,2) node[xic] {};
	\draw (1,1) node[xi] {};
}

\DeclareSymbol{arbreact1}{0}{
	\draw (0,-1.5) -- (0,0);
	\draw[kernelsmod] (0,0) -- (1,1);
	\draw[kernelsmod] (0,0) -- (-1,1);
	\draw  (0,2) -- (1,1) ;
	\draw (0,-1.5) node[diff] {};
	\draw (0,0) node[not] {};
	\draw (-1,1) node[xic] {};
	\draw (0,2) node[xic] {};
	\draw (1,1) node[xi] {};
}

\DeclareSymbol{arbreact2}{0}{
	\draw (0,-0.75) -- (-1,0.5); 
	\draw (0,-0.75) -- (1,0.5);
	\draw (0,1.5) -- (1,0.5);
	\draw (0,1.5) node[xic] {};
	\draw (1,0.5) node[xi] {};
	\draw (-1,0.5) node[xic] {};
	\draw (0,-0.75) node[diff] {};
}

\DeclareSymbol{arbreact3}{0}{
	\draw[kernelsmod] (0,-0.75) -- (-1,0.5); 
	\draw[kernelsmod] (0,-0.75) -- (1,0.5);
	\draw (0,1.75) -- (1,0.5);
	\draw (2,1.75) -- (1,0.5);
	\draw (0,1.75) node[xic] {};
	\draw (1,0.5) node[diff] {};
	\draw (-1,0.5) node[xic] {};
	\draw (2,1.75) node[xi] {};
	\draw (0,-0.75) node[not] {};
}

\DeclareSymbol{pre_im_I1Xitwo}{0}{
	\draw[kernels2] (0,-0.3) node[not] {} -- (-0.6,0.7) ;
	\draw[kernels2] (0,-0.3) -- (0.6,0.7);
	\draw (0,0.9) node[g] {};
}

\DeclareSymbol{pre_im_cI1Xi4ab}{2}{
	\draw[kernels2] (0,-1) node[not] {} -- (-0.6,0) ;
	\draw[kernels2] (0,-1) -- (0.6,0);
	\draw (0,0.2) node[g] {};
	\draw (0,0.6) -- (0,1.5);
	\draw[kernels2] (0,1.5) node[not] {} -- (-0.6,2.5) ;
	\draw[kernels2] (0,1.5) -- (0.6,2.5);
	\draw (0,2.7) node[g] {};
}%46

\DeclareSymbol{pre_im_I1Xi4acc2}{0}{
	\draw[kernels2] (-1,-0.5) node[not] {} -- (-1.6,0.5) ;
	\draw[kernels2] (-1,-0.5) -- (-0.4,0.5);
	\draw[kernels2] (-1,-0.5) -- (0.2,-1.5) node[not] {} ;
	\draw (-1,1.1) -- (-1,2);
	\draw[kernels2] (0.2,-1.5) -- (0.2,2);
	\draw (-1,0.7) node[g] {};
	\draw (-0.3,2.2) node[g] {};
}

\DeclareSymbol{pre_im_I1Xi4abcc2}{2}{
	\draw[kernels2] (0,-1) node[not] {} -- (-1,0) node[not] {};
	\draw[kernels2] (-1,1.2) node[not] {} -- (-1,0);
	\draw[kernels2] (-1,1.2) -- (-1.5,2.5);
	\draw[kernels2] (-1,1.2) -- (-0.5,2.5);
	\draw[kernels2] (-1,0) -- (0.7,2.5);
	\draw[kernels2] (0,-1) -- (1.5,2.5);
	\draw (-1,2.7) node[g] {};
	\draw (1,2.7) node[g] {};
}

\DeclareSymbol{pre_im_2I1Xi4c1}{2}{
	\draw[kernels2] (0,-0.5) node[not] {} -- (-1,0.5) node[not] {};
	\draw[kernels2] (0,-0.5) -- (1,0.5) node[not] {};
	\draw[kernels2] (-1,0.5) node[not] {}-- (-1.7,2);
	%\draw[kernels2]  (-0.7,1.5) -- (-0.7,1) -- (0,1) -- (0,2.5) -- (2.4,2.5) -- (2.4,1) -- (1,0);
	\draw[kernels2]  (-1,2) -- (1,0.5);
	\draw[kernels2] (-1,0.5) -- (1,2);
	\draw[kernels2] (1,0.5) -- (1.7,2);
	\draw (-1.2,2.2) node[g] {};
	\draw (1.2,2.2) node[g] {};
}

%\DeclareSymbol{pre_im_2I1Xi4c1}{2}{
	%\draw[kernels2] (0,-1) node[not] {} -- (-0.1,0) node[not] {};
	%\draw[kernels2] (0,-1) -- (1,0) node[not] {};
	%\draw[kernels2] (-0.1,0) node[not] {}-- (-1.7,1.5);
	%%\draw[kernels2]  (-0.7,1.5) -- (-0.7,1) -- (0,1) -- (0,2.5) -- (2.4,2.5) -- (2.4,1) -- (1,0);
	%\draw[kernels2]  (-0.7,1.5) -- (1,0);
	%\draw[kernels2] (-0.1,0) -- (0.7,1.5);
	%\draw[kernels2] (1,0) -- (1.7,1.5);
	%\draw (-1.2,1.7) node[g] {};
	%\draw (1.2,1.7) node[g] {};
	%}

\DeclareSymbol{pre_im_Xi4eabisc2}{2}{
	\draw[kernels2] (1.2,-0.5) node[not] {} -- (-0.7,0.8) ;
	\draw[kernels2] (1.2,-0.5) -- (0.4,0.8);
	\draw (0,1.4)  -- (0,2.2);
	\draw (1.2,2.2) -- (1.2,-0.6);
	\draw (0,1) node[g] {};
	\draw (0.6,2.4) node[g] {};
}

\DeclareSymbol{pre_im_Xi4eabisc22}{2}{
	\draw (1.2,-0.5) node[not] {} -- (-0.7,0.8) ;
	\draw[kernels2] (1.2,-0.5) -- (0.4,0.8);
	\draw (0,1.4)  -- (0,2.2);
	\draw[kernels2] (1.2,2.2) -- (1.2,-0.6);
	\draw (0,1) node[g] {};
	\draw (0.6,2.4) node[g] {};
}

\DeclareSymbol{pre_im_Xi4eabisc222}{2}{
	\draw[kernels2] (0.4,-0.5) node[not] {} -- (-0.6,1) ;
	\draw[kernels2] (1.2,0) -- (0.3,1);
	\draw (0,1.1)  -- (0,2.5);
	\draw[kernels2] (1.2,2.5) -- (1.2,0) node[not] {} -- (0.4,-0.6);
	\draw (0,1.2) node[g] {};
	\draw (0.6,2.5) node[g] {};
}

\DeclareSymbol{pre_im_Xi4eabc2}{2}{
	\draw (0,-0.5) node[not] {} -- (-1,0.5) node[not] {};
	\draw[kernels2] (-1,0.5) -- (-1.5,2);
	\draw[kernels2] (0,-0.5)  -- (0.7,2);
	\draw[kernels2] (-1,0.5) -- (-0.5,2);
	\draw[kernels2] (0,-0.5) -- (1.5,2);
	\draw (-1,2.2) node[g] {};
	\draw (1,2.2) node[g] {};
}

\DeclareSymbol{pre_im_Xi4eabbisc2}{2}{
	\draw[kernels2] (0,-0.5) node[not] {} -- (-1,0.5) node[not] {};
	\draw[kernels2] (-1,0.5) -- (-1.5,2);
	\draw[kernels2] (0,-0.5)  -- (0.7,2);
	\draw[kernels2] (-1,0.5) -- (-0.5,2);
	\draw (0,-0.5) -- (1.5,2);
	\draw (-1,2.2) node[g] {};
	\draw (1,2.2) node[g] {};
}

\DeclareSymbol{pre_im_I1Xi4abcc1}{2}{
	\draw[kernels2] (0,-1) node[not] {} -- (-1,0) node[not] {};
	\draw[kernels2] (0,1.1) node[not] {} -- (-1,0);
	\draw[kernels2] (-1,0) -- (-1.5,2.5);
	\draw[kernels2] (0,1.1) node[not] {} -- (-0.5,2.5);
	\draw[kernels2] (0,1.1) -- (0.5,2.5);
	\draw[kernels2] (0,-1) -- (1.5,2.5);
	\draw (-1,2.7) node[g] {};
	\draw (1,2.7) node[g] {};
}

\DeclareSymbol{pre_im_Xi4eabc1}{2}{
	\draw (0,-0.5) node[not] {} -- (-1,0.5) node[not] {};
	\draw[kernels2] (-1,0.5) -- (-1.5,2);
	\draw[kernels2] (0,-0.5)  -- (-0.5,2);
	\draw[kernels2] (-1,0.5) -- (0.8,2);
	\draw[kernels2] (0,-0.5) -- (1.5,2);
	\draw (-1,2.2) node[g] {};
	\draw (1,2.2) node[g] {};
}

\DeclareSymbol{pre_im_Xi4ba1b}{2}{
	\draw[kernels2] (0,0) node[not] {}  -- (1.8,1.5);
	\draw[kernels2] (0,0) -- (0.8,1.5);
	\draw (0,-0.1) -- (-1.8,1.5);
	\draw (0,-0.1) -- (-0.8,1.5);
	\draw (-1,1.7) node[g] {};
	\draw (1,1.7) node[g] {};
}

\DeclareSymbol{pre_im_Xi4ba2}{2}{
	\draw (0,-0.1) node[not] {}  -- (1.8,1.5);
	\draw[kernels2] (0,0) -- (0.8,1.5);
	\draw (0,-0.1) -- (-1.8,1.5);
	\draw[kernels2] (0,0) -- (-0.8,1.5);
	\draw (-1,1.7) node[g] {};
	\draw (1,1.7) node[g] {};
}

\DeclareSymbol{pre_im_Xi4cabc2}{2}{
	\draw[kernels2] (0,-0.5) node[not] {} -- (-1,0.5) node[not] {};
	\draw[kernels2] (-1,0.5) -- (-1.5,2);
	\draw (-1,0.5)  -- (0.7,2);
	\draw[kernels2] (-1,0.5) -- (-0.5,2);
	\draw[kernels2] (0,-0.5) -- (1.5,2);
	\draw (-1,2.2) node[g] {};
	\draw (1,2.2) node[g] {};
}

\DeclareSymbol{pre_im_Xi4cabc1}{2}{
	\draw[kernels2] (0,-0.5) node[not] {} -- (-1,0.5) node[not] {};
	\draw (-1,0.5) -- (-1.5,2);
	\draw[kernels2] (-1,0.5)  -- (0.7,2);
	\draw[kernels2] (-1,0.5) -- (-0.5,2);
	\draw[kernels2] (0,-0.5) -- (1.5,2);
	\draw (-1,2.2) node[g] {};
	\draw (1,2.2) node[g] {};
}

\DeclareSymbol{pre_im_Xi4eabbisc1}{2}{
	\draw[kernels2] (0,-0.5) node[not] {} -- (-1,0.5) node[not] {};
	\draw[kernels2] (-1,0.5) -- (-1.5,2);
	\draw[kernels2] (0,-0.5)  -- (-0.5,2);
	\draw[kernels2] (-1,0.5) -- (0.8,2);
	\draw (0,-0.5) -- (1.5,2);
	\draw (-1,2.2) node[g] {};
	\draw (1,2.2) node[g] {};
}

\DeclareSymbol{pre_im_1}{0}{
	\draw[kernels2] (0,-0.5) node[not] {} -- (-0.6,0.5) ;
	\draw[kernels2] (0,-0.5) -- (0.6,0.5);
	\draw (0,1.1)  -- (-0.55,2);
	\draw (0,1.1)  -- (0.55,2);
	\draw (0,0.7) node[g] {};
	\draw (0,2.2) node[g] {};
}

\DeclareSymbol{disconnect}{0}{
	\draw[kernels2] (0,-0.5) node[not] {} -- (-0.6,0.5) ;
	\draw[kernels2] (0,-0.5) -- (0.6,0.5);
	\draw (-0.55,1.1)  -- (-0.55,2.3);
	\draw (0.55,2.3) -- (0.55,1.5) -- (1.2,1.5) -- (1.2,3.5) -- (0.55,3.5) -- (0.55,2.7);
	\draw (0,0.7) node[g] {};
	\draw (0,2.5) node[g] {};
}

\DeclareSymbol{pre_im_2}{2}{\draw[kernels2] (0,0) node[not] {} -- (-1,1) node[not] {};
	\draw[kernels2] (0,0) -- (1,1) node[not] {};
	\draw[kernels2] (-1,1) -- (-1.5,2.5);
	\draw[kernels2] (-1,1) -- (-0.5,2.5);
	\draw[kernels2] (1,1) -- (0.5,2.5);
	\draw[kernels2] (1,1) -- (1.5,2.5);
	\draw (-1,2.7) node[g] {};
	\draw (1,2.7) node[g] {};
}

\DeclareSymbol{CX_rec}{0}{
	\draw [black] (-0.3,1) to (-0.3,-0.3);
	\draw [black] (0.3,1) to (0.3,-0.3);
	\draw [black] (-0.3,1) to (-0.3,2.3);
	\draw [black] (0.3,1) to (0.3,2.3);
	\draw (0,1) node[rec] {};
}

\DeclareSymbol{CX_cerc}{0}{
	\draw [black] (0,1) to (0,-0.3);
	\draw (0,1) node[cerc] {};
}

%setting the heading and footer

\pagenumbering{arabic}

\DeclareMathAlphabet{\mathpzc}{OT1}{pzc}{m}{it}

%%%%%%%%%%%%%%%%%%%%%%%%%%%%%%%%%%%%%%%%%%%%%%%%%%%%%%%%
%
%
%              End of the tikz code for trees
%
%
%%%%%%%%%%%%%%%%%%%%%%%%%%%%%%%%%%%%%%%%%%%%%%%%%%%%%%%%

\usepackage{pifont}

\let\d\partial

\def\eqref#1{(\ref{#1})}

\def\Hom{\mathop{\mathrm{Hom}}\nolimits}

\makeatletter % Stolen from the internet to make a fat \cdot which isn't as fat as a \bullet
\newcommand*{\bigcdot}{}% Check if undefined
\DeclareRobustCommand*{\bigcdot}{%
	\mathbin{\mathpalette\bigcdot@{}}%
}
\newcommand*{\bigcdot@scalefactor}{.5}
\newcommand*{\bigcdot@widthfactor}{1.15}
\newcommand*{\bigcdot@}[2]{%
	% #1: math style
	% #2: unused
	\sbox0{$#1\vcenter{}$}% math axis
	\sbox2{$#1\cdot\m@th$}%
	\hbox to \bigcdot@widthfactor\wd2{%
		\hfil
		\raise\ht0\hbox{%
			\scalebox{\bigcdot@scalefactor}{%
				\lower\ht0\hbox{$#1\bullet\m@th$}%
			}%
		}%
		\hfil
	}%
}
\makeatother

\tcbset
{colframe=boxcolor,colback=symbols!7!pagebackground,coltext=pageforeground,
	fonttitle=\bfseries,nobeforeafter,center title,size=fbox,boxsep=1.5pt,
	top=0mm,bottom=0mm,boxsep=0mm,tcbox raise base}

\def\two{{\<generic>\kern0.05em\<genericb>}}
\def\twoI{{\<Ito>\kern0.05em\<Itob>}}

\def\mail#1{\burlalt{#1}{mailto:#1}}

%%Command for inline chi
\DeclareRobustCommand{\rchi}{{\mathpalette\irchi\relax}}
\newcommand{\irchi}[2]{\raisebox{\depth}{$#1\chi$}}

\usepackage{thmtools}

%\declaretheorem[style=definition]{example}

\begin{document}

\title{Elementary differentials from multi-indices to rooted trees}

\author{Yvain Bruned, Paul Laubie}
\institute{ 
 Universite de Lorraine, CNRS, IECL, F-54000 Nancy, France
  \\
Email:\ \begin{minipage}[t]{\linewidth}
\mail{yvain.bruned@univ-lorraine.fr}
\\ \mail{paul.laubie@univ-lorraine.fr}
\end{minipage}}

\maketitle 

\begin{abstract}
Rooted trees are essential for describing numerical schemes via the so-called B-series. They have also been  used extensively in rough analysis for expanding solutions
of singular Stochastic Partial Differential Equations (SPDEs). When one considers scalar-valued equations, the most efficient combinatorial set is multi-indices. In this paper, we investigate the existence of intermediate combinatorial sets that will lie between multi-indices and rooted trees. We provide a negative result stating that there is no combinatorial set encoding elementary differentials in dimension $d\neq 1$, and compatible with the rooted trees and the multi-indices aside from the rooted trees. This does not close the debate of the existence of such combinatorial sets, but it shows that it cannot be obtained via a naive and natural approach. 
\end{abstract}
\setcounter{tocdepth}{2}
\setcounter{secnumdepth}{4}
\tableofcontents

\section{Introduction}

Since the 60's and the pioneer work of Butcher \cite{Butcher72}, rooted trees  have been the  main combinatorial object for describing numerical schemes of ODEs of the form:
\begin{equs}
	y'(t)  = f(y(t)), \quad y(0) = y_0.
\end{equs}
The scheme is described via a B-series (terminology coming from \cite{HW74}) which are series of the form
\begin{equation}
	\label{eq:Bseries}
	B(\alpha, h,f,y) 
	=\sum_{\tau \in T} \frac{h^{|\tau|}\alpha(\tau)}{S(\tau)} F_f(\tau)(y).
\end{equation}
Where $T$ are rooted trees, $h$ is the time step of the method, $|\tau|$ corresponds to the number of nodes of $\tau$, $S(\tau)$ is a symmetry factor, $\alpha(\tau)$ are the coefficients of the method, and $F(\tau)$ are the elementary differentials. This formalism has been used beyond numerical analysis in the context of rough analysis, where one wants to deal with a rough equation of the form
\begin{equs}
	d Y_t  = \sum_{i=1}^d f_i(Y_t) d X^i_t, \quad Y_0 = y_0.
\end{equs}
Where the $X^i$ are continuous paths not differentiable. If one has a rough path above $X$, which is a collection of iterated integrals built out of the $X^i$, one can look at solutions of the form of a B-series:
\begin{equs} \label{rough_path_expansion}
		Y_t
	=\sum_{\tau \in T_d} \frac{F_f(\tau)(Y_s)}{S(\tau)}  \mathbb{X}_{s,t}(\tau).
\end{equs}
Where now $T_d$ are decorated trees with decorations on the node, and $ \mathbb{X}_{s,t}$ is the rough path, also called branched rough paths. This tree series expansion has been introduced in \cite{Gub06}, where one does not have some geometric properties (integration by parts available) for the iterated integrals. For geometric rough paths, see \cite{lyons1998,Gubinelli2004}, and for the link between the two types of rough paths, see \cite{HK15}. 
The idea of the expansion \eqref{rough_path_expansion} has been used for building up Regularity Structures in \cite{reg}, which proposes a new type of Taylor expansion for solutions of singular Stochastic Partial Differential Equations (SPDEs). It uses decorated trees with decorations both on the edges and the nodes \cite{BHZ}, and one has a new B-series formalism for the first time described in \cite{BCCH}. Decorated trees and their Hopf algebra structures are one of the main components of this theory.

Recently, a new combinatorial set called multi-indices has been proposed  in \cite{OSSW} for scalar-valued SPDEs. It enjoys similar Hopf algebras as in the context of decorated trees \cite{LOT}, and one can work with some extensions for dealing with more than one noise in \cite{BL24,BD23}. Multi-indices are connected to Novikov algebras as explained in \cite{DL}.
The idea is to notice that for scalar-valued equations, some trees have the same elementary differential, and therefore, one wants to gather them under the umbrella of the same combinatorics element. One wants to avoid over-parametrisation. For multi-indices, this boils down to keeping for each node of a tree its arity. 

Shortly after the previous works on SPDEs, it was noticed that multi-indices naturally occur in the context of rough paths \cite{Li23,BBH25} and numerical analysis \cite{MV16,BHE24}, where the multi-indices B-series uniquely characterise the Taylor expansion of one-dimensional local and affine equivariant maps. These combinatorics questions also appear in the other approaches developed for singular SPDEs, such as para-controlled calculus \cite{GIP15,BB19}, and the flow approach \cite{Duc21,CF24,BM25}. It is also a fundamental question in dispersive PDEs for low regularity schemes \cite{BS}, derivation of kinetic wave equations \cite{DH23} and subcritical equations with random initial data \cite{DNY22}.
Let us conclude by noting that the study of symmetries, such as the chain rule \cite{BGHZ,BD24,BB24}, varies according to the chosen combinatorial sets.

The main idea of this work is to investigate the potential existence of new combinatorial sets lying between rooted trees and multi-indices that could encode elementary differentials in small dimensions. As we want something easy to write, to manipulate, and with good algebraic properties, we want this set to be compatible with the rooted trees and the multi-indices. One has a natural surjection from the rooted trees to the multi-indices. One wants to know if it is possible to find an intermediate combinatorial set $A$ that could encode elementary differentials in small dimensions, and such that one of the two below is true
\begin{equs}
	\text{Rooted Trees} \, \twoheadrightarrow A, \quad A \twoheadrightarrow  \text{Multi-indices}.
\end{equs}
Hopefully, we want this intermediate set to satisfy nice algebraic properties. In this work, we provide negative results on these identities summarised in the theorem below:

\begin{theorem}
	There is no combinatorial set encoding elementary differentials in dimension $d\neq 1$ and compatible with the rooted trees and the multi-indices aside from the rooted trees.
\end{theorem}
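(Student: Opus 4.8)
The plan is to reduce the statement to a single injectivity property of elementary differentials and then to prove that property by induction on the size of the tree. Fix a dimension $d\geq 2$, write $T$ for rooted trees and $\mathrm{MI}$ for multi-indices, and regard the elementary differential map $F^{(d)}\colon T\to\{\text{elementary differentials}\}$, $\tau\mapsto F_f(\tau)$, as valued in polynomials in the independent jet symbols $f^i,\ \partial_a f^i,\ \partial_a\partial_b f^i,\dots$ with $i,a,b,\dots\in\{1,\dots,d\}$. I would first record the elementary observation that assigning weight $t_k$ to every jet symbol $\partial_{a_1}\cdots\partial_{a_k}f^i$ makes $F^{(d)}(\tau)$ multihomogeneous, with multidegree in the $t_k$ counting the nodes of each arity in $\tau$. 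Thus the multidegree of $F^{(d)}(\tau)$ \emph{is} the multi-index of $\tau$; in particular $F^{(d)}(\tau)=F^{(d)}(\tau')$ forces $\pi(\tau)=\pi(\tau')$, where $\pi\colon T\twoheadrightarrow\mathrm{MI}$ is the canonical surjection.

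Next I would formalise ``intermediate set encoding elementary differentials'' as a surjection $p\colon T\twoheadrightarrow A$ together with a map $\bar F\colon A\to\{\text{elementary differentials}\}$ satisfying $\bar F\circ p=F^{(d)}$. By the multidegree remark the composite map $\pi$ then factors through $A$ automatically, so $A$ sits in a chain $T\twoheadrightarrow A\twoheadrightarrow\mathrm{MI}$ and both alternatives of the Introduction are covered. Now if $p(\tau)=p(\tau')$ then $F^{(d)}(\tau)=\bar F(p(\tau))=\bar F(p(\tau'))=F^{(d)}(\tau')$, so every identification made by $p$ occurs between trees with the same elementary differential. Hence the theorem is \emph{equivalent} to the following Key Lemma: for $d\geq 2$, two rooted trees with the same elementary differential are isomorphic. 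Indeed, the Key Lemma forces $p$ to be injective, whence $A\cong T$ is the rooted trees.

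To prove the Key Lemma I would use strong induction on the number of nodes via the defining recursion $F^{(d)}(\tau)=\sum_{a_1,\dots,a_k}\partial_{a_1}\cdots\partial_{a_k}f^{\,\bullet}\,\prod_{m=1}^{k}F^{(d)}(\tau_m)^{a_m}$, where $\tau$ has root-arity $k$ and branch subtrees $\tau_1,\dots,\tau_k$. Encoding any $\mathbb{R}^d$-valued quantity $G$ as the linear form $\widehat G=\sum_a G^a x_a$ in auxiliary variables $x_1,\dots,x_d$ over the ring $R$ of jet symbols, the recursion presents $F^{(d)}(\tau)$ as the generic symmetric tensor $\partial^k f$ paired with the degree-$k$ product $\widehat{F^{(d)}(\tau_1)}\cdots\widehat{F^{(d)}(\tau_k)}$. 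Because the components of $\partial^k f$ are independent indeterminates, this pairing pins down that product inside $R[x_1,\dots,x_d]$, and unique factorisation into linear forms over $\mathrm{Frac}(R)$ recovers the multiset $\{\widehat{F^{(d)}(\tau_1)},\dots,\widehat{F^{(d)}(\tau_k)}\}$ up to scalars. Since $d\geq 2$ there exist non-proportional linear forms, so distinct branches remain distinguishable --- this is exactly where the argument must fail for $d=1$, where all $x_a$ collapse to one variable and one recovers precisely the multi-index identification. Strengthening the inductive hypothesis to ``distinct trees have non-proportional elementary differentials'' then matches each factor to a unique subtree, the root-arity $k$ is read off from the multidegree, and reassembling root and branches reconstructs $\tau$.

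The main obstacle is the step that isolates the branch product from $F^{(d)}(\tau)$: unlike the large-$d$ (free) situation, where distinct trees yield manifestly distinct tensor-network contractions, for fixed small $d$ the symbol $\partial^k f$ carrying the free root index also occurs inside the branches whenever some descendant has arity $k$, so one cannot simply extract the coefficient of the root tensor, and for general graphs such ``$d$-state partition functions'' genuinely collide. I would close this gap by keeping the root index symbolic and exploiting that the root contributes the unique \emph{index-exposed} and degree-one occurrence of $\partial^k f$, together with the non-proportionality carried through the induction so that the scalar ambiguity in the factorisation never obstructs the identification of subtrees. Establishing that this isolation can always be performed, uniformly in every $d\geq 2$ and every tree --- equivalently, that the rigidity of \emph{trees} (as opposed to arbitrary contraction graphs) rules out any coincidence $F^{(d)}(\tau)=F^{(d)}(\tau')$ --- is the delicate heart of the argument.
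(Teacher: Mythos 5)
There are two genuine gaps. The first is structural: the theorem you are proving summarises \emph{both} Theorem~\ref{thm:1} (compatibility with $\mathrm{RT}$) and Theorem~\ref{thm:2} (compatibility with $\mathrm{MI}$), and your reduction only captures the first. Your claim that ``by the multidegree remark the composite map $\pi$ factors through $A$ automatically, so both alternatives of the Introduction are covered'' is incorrect: it covers only those $A$ that are already quotients of the rooted trees, because your factorisation argument presupposes a surjection $p\colon \mathrm{RT}\twoheadrightarrow A$ with $\bar F\circ p=F$. The second alternative $A\twoheadrightarrow\mathrm{MI}$ makes no such assumption, and since $\mathrm{RT}$ itself is $\mathrm{MI}$-compatible but not faithful in fixed dimension, the correct statement there must involve faithfulness --- a notion absent from your formalisation. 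The paper's proof of that half is of a completely different nature: it uses Dzhumadil'daev's classification of minimal left identities of the Witt algebra (the identity $\mathrm{s}_{2d}$ of Theorem~\ref{minimal_identity}), the correspondence with linear trees (Proposition~\ref{correspondance_left}), and a character computation showing that $\mathcal{LW}_d(2d+1)$ admits no $\mathfrak{S}_{2d+1}$-stable basis (the character takes the value $-2$ on a $(2d-1)$-cycle). Nothing in your proposal substitutes for this representation-theoretic obstruction.

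The second gap is inside your Key Lemma, and you flag it yourself: the step isolating the branch product from $F^{(d)}(\tau)$ is left as ``the delicate heart of the argument'', so the induction is not closed. Moreover, the difficulty you describe rests on conflating two settings. In the paper's Definition~\ref{elementary_diff} each node $i$ carries its \emph{own} function $f^i$, so the root jet $\partial^k f^r$ is an indeterminate that cannot recur inside the branches; the coefficient extraction you worry about is in fact immediate there (and the remaining work is the root-identification and the $\mathrm{Frac}(R)$-scalar bookkeeping in your unique-factorisation step, which still needs to be carried out, with ``isomorphic'' upgraded to ``equal as labelled trees'' for injectivity of $p$ to follow). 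Conversely, in the single-function setting where your worry is real, the Key Lemma in labelled form is simply false: permuting vertices does not change the elementary differential, which is exactly why the paper replaces $\mathrm{RT}$-compatibility by $\mathrm{RT}_\mathfrak{S}$ in that context (Theorem~\ref{compatible_trees_b}). For comparison, the paper's route to your Key Lemma is complete and quite different: equality of elementary differentials forces equality of multi-indices via the projection to $d=1$; Proposition~\ref{prp:transitiveaction} produces a vertex bijection $\sigma$ with $\sigma.\tau_1=\tau_2$ for trees with equal multi-index; and Proposition~\ref{prp:geofixed} shows, by evaluating on the explicit test functions $y^k\vec{e}_x+x^n\vec{e}_y$ in dimension $2$, that $F(\sigma.\tau)=F(\tau)$ forces $\sigma.\tau=\tau$. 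You would either need to complete your factorisation argument in the distinct-functions setting or adopt such explicit evaluations.
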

This negative answer does not necessarily close the debate about the existence of a new type of coding in small dimensions. It states that if one starts with a naive and natural approach, one cannot get a new set. 

Although we do not use any technical definition from the algebraic side, and most proofs rely on elementary group theory, we should give some insight about the algebraic context in which this article sits.
The algebraic approach we follow in this article is virtually the same as in \cite{DL25}, using combinatorial species (see \cite{BLL,J81} for an introduction to the theory of species). 
Due to this approach, although never explicitly mentioned, operad theory is omnipresent in this article (see \cite{LV} for an introduction to the theory of algebraic operads). 
This is especially visible with the use of the theory of polynomial identities in Witt algebras \cite{D00,KU16}. 
Indeed, it is known that the study of polynomial identities in characteristic $0$ exactly correspond to the study of operadic ideals.
In particular, each theorem of this article may be stated in the language of algebraic operads.
Fortunately, since none of the results nor their proof require technical definitions from the theory of species or the theory of algebraic operads, no a priori knowledge in those topics is required to read the article.

Let us  briefly explain what is happening. We start in Section \ref{Sec::2} by defining elementary differentials on rooted trees with $n$ nodes labelled from $1$ to $n$ (see Definition \ref{elementary_diff}), denoted for a rooted tree $\tau$ by
\begin{equs}
		F(\tau)\left((f^i)_{i\in \{1,\dots,n\}}\right), \quad f^i \in \mathcal{C}^\infty(\mathbb{R}^d,\mathbb{R}^d).
\end{equs}
Let us stress that one chooses a different non-linearity $f$ for each node of $\tau$, which gives a lot of freedom. We do the same for multi-indices. For a multi-index $ z^{\beta}$, one considers
\begin{equs}
		G(z^{\beta})\left(f_1,...,f_n\right), \quad f_i \in  \mathcal{C}^\infty(\mathbb{R},\mathbb{R})
\end{equs}
which is defined in \eqref{elementary_diff_multi}. For $d=1$, $F$ and $G$ are connected in Proposition \ref{prp:multi_trees} via the surjection $\pi$ that allows  moving from the rooted trees to the multi-indices.
Inspired by this property, we define what is a combinatorial description of the elementary differentials compatible with the rooted trees and the multi-indices in Definition \ref{Def_combinatorial}.

In Section \ref{Sec::3}, we state our first negative result, Theorem \ref{thm:1}, showing that for $d \neq 1$, the only combinatorial description compatible with rooted trees are the rooted trees. The proof relies on the crucial identity from Proposition \ref{prp:geofixed}; for a rooted tree $\tau$, we have
\begin{equs}
	F(\sigma . \tau) = F(\tau)  \quad \Longleftrightarrow \quad \sigma . \tau = \tau.
\end{equs}
where $\sigma$ is a bijection of the nodes of $\tau$ acting on $\tau$ and producing another tree with the same nodes (see Definition \ref{def_sigma}). One should be especially careful with the notation $\sigma.\tau$ since this is not the usual action on rooted trees by permutation of the vertices. The second point is to use the multi-indices in Proposition \ref{prp:transitiveaction} to say if $\tau_1$ and $\tau_2$ have the same multi-indices, one has the existence of $\sigma$ such that $ \sigma . \tau_1 = \tau_2 $.

In Section \ref{Sec::4}, we state our second negative result, Theorem \ref{thm:2}, which establishes the non-existence of a faithful combinatorial description compatible with the multi-indices. 
Its proof relies on the knowledge of minimal left identities (see Theorem \ref{minimal_identity}) for the Witt algebra of rank $d$ (see Definition \ref{def_Witt}). 
Then, one connects this identity with linear rooted trees in Proposition \ref{correspondance_left}. 
For finishing the proof, one has to compute the character associated with the elementary differentials arising from linear trees (see Corollary \ref{representation_d}).
We provide a stronger negative result in dimension $2$ (see Theorem \ref{thm:3}), saying that there is not a faithful combinatorial description of the elementary differentials for $d=2$. 
We expect this result to be true for any $d\neq 1$; however, part of the proof cannot be generalised as it relies on a direct inspection with the help of a computer program.

In Section \ref{Sec::5}, we investigate the case when the number of $f$ is smaller than the number of nodes of $\tau$, and thus several nodes of $\tau$ are associated with the same non-linearity in the definition of the elementary differentials. 
We introduce in Definition \ref{l_labeling} the concept of $\ell$-labelling of a combinatorial object with $n$ nodes. 
This allows us to define the notion of an $\ell$-weak combinatorial description of the elementary differentials in dimension $d$ (see Definition \ref{l_weak_combinatorial}).

In Section \ref{Sec::6}, equipped with the definitions of the previous section, one shows in Theorems \ref{thm_1_b} and \ref{thm_2_b} the analogue of Theorems \ref{thm:1} and \ref{thm:2}, when one has two non-linearities for defining the elementary differentials, which already cover interesting cases for singular SPDEs (see Remark \ref{remark_SPDE}).
When one considers only a single non-linearity, the situation changes drastically. Indeed, Theorem~\ref{compatible_trees_b} shows that the compatibility with $\mathrm{RT}$ is no longer a natural condition to impose. Moreover, Theorem~\ref{compatible_multi_b} shows that one cannot hope for a result similar to Theorem~\ref{thm:2} in this context. The main issue is that it seems quite hard to get a precise description of the combinatorial sets compatible with rooted trees and multi-indices. One of the main reasons is that we cannot distinguish anymore the nodes of the combinatorial object due to the fact that one uses only a single non-linearity.

Finally, in the last section, we informally discuss the negative results we obtained. 
We argue that there is no ``meaningful'' combinatorial way to describe elementary differentials in fixed dimension aside from the rooted trees. 
Indeed, if we take the example of the dimension $2$, we can either get a $1$-weak combinatorial description that does not respect any algebraic structure of $\mathcal{W}_2$ or a description which is not compatible with the rooted tree nor faithful. None of those seems satisfactory.

\subsection*{Acknowledgements}

{\small
	Y. B. and P. L. gratefully acknowledge funding support from the European Research Council (ERC) through the ERC Starting Grant Low Regularity Dynamics via Decorated Trees (LoRDeT), grant agreement No.\ 101075208. Views and opinions expressed are however those of the author(s) only and do not necessarily reflect those of the European Union or the European Research Council. Neither the European Union nor the granting authority can be held responsible for them. 
	P. L. would also like to thank Thomas Agugliaro for the discussions about stable bases of representations while writing this article.
}

\section{Rooted trees, multi-indices and elementary differentials}
\label{Sec::2}

	A \emph{rooted tree} in a (non-empty) oriented finite graph $(V,E)$ with $V$ a finite set, the \emph{set of vertices}, and $E$ a set of (ordered) pairs of elements of $V$, the \emph{set of edges}, such that:
	\begin{itemize}
		\item Each vertex $v\in V$ admits a unique outgoing edge $e=(\bullet,v)\in E$, except one vertex $r\in V$ admitting no outgoing edges, which is called the \emph{root}.
		\item The graph $(V,E)$ is weakly connected; in this context it is equivalent to the fact that each vertex $v\in V$ admits a path to the root.
	\end{itemize}
A vertex $w$ is \emph{above} $v$ if there is a path from $w$ to $v$, we should notice that all the vertices are above the root. 
The set $C_v=\{w\in V\mid (v,w)\in E\}$ of vertices above $v$ and connected to $v$ by an edge are the \emph{children} of $v$.
A rooted tree is \emph{linear} if each vertex has at most one child.

A \emph{subtree} $\tau'=(V',E')$ of $\tau$ is a rooted tree that is a subgraph of $\tau$.
Let $A_v$ the set of vertices above $v$ (including $v$), we define $E_v$ as the subset of $E$ of edges between vertices of $A_v$; then $\tau_v=(A_v,E_v)$ is the \emph{subtree of $\tau$ above $v$}.

In the following, the set of vertices of a rooted tree will always be a subset of $\mathbb{N}$.
We denote by $\mathrm{RT}(n)$ the set of rooted trees with vertices $[n] = \{1,\dots,n\}$. The action of $\mathfrak{S}_n$ on $[n]$ naturally induces an action of $\mathfrak{S}_n$ on $\mathrm{RT}(n)$ by permuting the vertices.
Let $\tau=(V,E)$ a rooted tree, and $v_1,\dots,v_k$ the children of the root $r$. Then, one has
\begin{equs}
\tau = 	B^{r}_+(\tau_{v_1},\ldots,\tau_{v_k}) = 
\begin{tikzpicture}[scale=0.6,baseline=-2]
	\coordinate (root) at (0,-0.4);
	\coordinate (t1) at (-0.3,0.5);
	\coordinate (t2) at (-1.1,0.5);
	\coordinate (t3) at (1.1,0.5);
	\coordinate (t4) at (1.1,0.5);
	\coordinate (tau1) at (1,0.6);
	\draw[] (t1) -- (root);
	\draw[] (t2) -- (root);
	\draw[] (t3) -- (root);
	\node[not,label={[label distance=-0.2em]below: \scriptsize  $ r $}] (rootnode) at (root) {};
	\draw (-1.2,0.7) node[] {$\tau_{\tiny{v_{1}}}$};
	\node[not] (rootnode) at (root) {};
	\draw (1.3,0.7) node[] {$\tau_{\tiny{v_{k}}}$};
	\draw (-0.4,0.7) node[] {$\tau_{\tiny{v_{2}}}$};
	\draw (0.5,0.7) node[] {$\cdots$};
\end{tikzpicture}.
\end{equs}
Here, the operator $B_+^r$ connects the roots of the $\tau_{v_i}$ to a new root labelled by $r$. 
\begin{remark}\label{rmk:aromatictrees}
	We should notice that dropping the connectedness condition gives the definition of aromatic trees using the terminology of \cite{MV16} that we will denote $\mathrm{AR}(n)$. 
	Those combinatorial objects were first introduced in the context of B-series in \cite{IQT,CM07} with different terminology. 
	We refer to \cite{MV16} for their link with B-series with the aromatic B-series and to \cite{DL25} for an algebraic approach of aromatic B-series via the theory of combinatorial species. 
\end{remark}

In the next definition, we introduce the elementary differentials on $\mathbb{R}^d$.
\begin{definition} \label{elementary_diff}
	Let $d\in \mathbb{N}$, we denote $\mathcal{C}_d=\mathcal{C}^\infty(\mathbb{R}^d,\mathbb{R}^d)$, $f_j$ the $j$-coordinate of $f\in\mathcal{C}_d$, and $\partial_j=\frac{\partial}{\partial x_j}$. 
	Let us recursively define the map $F:\mathrm{RT}(n)\to\Hom(\mathcal{C}_d^{\otimes n},\mathcal{C}_d)$. 
	Let $\tau= B^{r}_+(\tau_{v_1},\ldots,\tau_{v_k})$, and $f^1,\dots,f^n\in \mathcal{C}_d$, and let
	\begin{multline*}
		F(\tau)\left((f^i)_{i\in \{1,\dots,n\}}\right) :=\\
		\sum_{j_1,\dots,j_k=1}^d F(\tau_{v_1})\left((f^i)_{i\in C_{v_1}}\right)_{j_1}\dots F(\tau_{v_k})\left((f^i)_{i\in C_{v_k}}\right)_{j_k}
		  \partial_{j_1}\dots\partial_{j_k}f^r
	\end{multline*}
	An \emph{elementary differential} of arrity $n$ of $\mathbb{R}^d$ is an element of $F(\mathrm{RT}(n))$. The \emph{vector space of elementary differentials} of arity $n$ of $\mathbb{R}^d$ is the vector space spanned by the elementary differentials of arity $n$.
	We denote by $\mathcal{W}_{d}(n)$ the vector space of elementary differentials of arity $n$ in $\mathbb{R}^d$, and by $\mathcal{LW}_d(n)$ the sub-vector space of $\mathcal{W}_d(n)$ of elementary differentials arising from linear trees.
\end{definition}

\begin{remark}
	The definition of elementary differentials can be extended to aromatic elementary differentials. We refer to \cite{MV16} for their actual definition.
\end{remark}

Let us compute the elementary differential associated to a tree on an example. Let $f,g,h,p\in\mathcal{C}_d$.
\begin{equs}
	F\left(
        \vcenter{\hbox{\begin{tikzpicture}
                %vertices
                \node (1) at (-0.4, 1.6) {};
                \node (2) at (-0.4, 0.8) {};
                \node (3) at (0, 0) {};
                \node (4) at (0.4, 0.8) {};
                %triangles
                %edges
                \draw[->,thick,shorten >= 2pt] (1)--(2);
                \draw[->,thick,shorten >= 2pt] (2)--(3);
                \draw[->,thick,shorten >= 2pt] (4)--(3);
                %dots
                %circles
                \draw[circle, fill=white] (1) circle [radius=6pt];
                \draw[circle, fill=white] (2) circle [radius=6pt];
                \draw[circle, fill=white] (3) circle [radius=6pt];
                \draw[circle, fill=white] (4) circle [radius=6pt];
                %labels
                \node at (1) {\scalebox{0.8}{$1$}};
                \node at (2) {\scalebox{0.8}{$2$}};
                \node at (3) {\scalebox{0.8}{$3$}};
                \node at (4) {\scalebox{0.8}{$4$}};
        \end{tikzpicture}}}
	\right)(f,g,h,p)=\sum_{i,j,k=0}^d f_i\partial_ig_jp_k\partial_j\partial_kh
\end{equs}

We should notice that for $d'\leq d$ we have a canonical projection $\mathcal{W}_d\twoheadrightarrow\mathcal{W}_{d'}$ by evaluating the $d-d'$ last variable to $0$ and forgetting the $d-d'$ last coordinates. 
Those projections are compatible with the map $F$. We will denote $\pi_d:\mathcal{W}_d\twoheadrightarrow\mathcal{W}_{1}$ the canonical projection from $\mathcal{W}_d$ to $\mathcal{W}_1$.

By definition, elementary differentials admit a combinatorial description with rooted trees; moreover, if the dimension is not fixed, this description is ``faithful'' thanks to the following classical result \cite[Sec. II.2, Exercise 4]{HNW}: 

\begin{proposition}
	Let $\tau_i \in \mathrm{RT}(n)  $ and $\lambda_i \in \mathbb{R}$. One has for $d$ large enough
	\begin{equs}
		\forall f^1,...,f^n \in \mathcal{C}_d, \quad \sum_{i} \lambda_i F(\tau_i)\left(f^1,...,f^n\right) = 0 \quad \Longleftrightarrow \quad \forall i, \quad \lambda_i = 0.
	\end{equs}
\end{proposition}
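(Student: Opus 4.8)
The plan is to reduce the claim to a statement of linear independence of polynomials in the jets of the $f^i$ at a point, and then to use the fact that the $n$ nodes of $\tau$ carry \emph{distinct} non-linearities to recover $\tau$ from a single, carefully chosen monomial. The reverse implication being trivial, only the forward one requires work.

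First I would unfold the recursion of Definition \ref{elementary_diff} into a closed contraction formula. Writing $[n]$ for the vertex set, $r$ for the root, and $C_v(\tau)$ for the children of $v$, a straightforward induction gives, for every coordinate $m\in\{1,\dots,d\}$,
\begin{equs}
	F(\tau)\big((f^i)_{i}\big)_m
	= \sum_{\substack{\phi:[n]\to\{1,\dots,d\}\\ \phi(r)=m}}\ \prod_{v\in [n]}\Big(\prod_{w\in C_v(\tau)}\partial_{\phi(w)}\Big)f^v_{\phi(v)}.
\end{equs}
Each factor of the inner product is attached to exactly one node $v$: it is the $\phi(v)$-th coordinate of $f^v$ differentiated once in each direction $\phi(w)$, $w\in C_v(\tau)$. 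The key structural remark is that, since the labels are pairwise distinct, the function $f^v$ occurs in one and only one factor.

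Next I would evaluate at $x=0$ and regard the finitely many numbers $\big(\partial^{\beta}f^v_j\big)(0)$ (only derivatives of order $\le n-1$ occur) as free real parameters, every prescribed finite jet being realised by some polynomial function. Hence the hypothesis $\sum_i\lambda_i F(\tau_i)=0$, valid for all $f^1,\dots,f^n$, becomes a polynomial identity in these parameters, so every coefficient must vanish. It therefore suffices, for each fixed $\tau_{i_0}$, to exhibit one monomial whose coefficient in $\sum_i\lambda_i F(\tau_i)_{r_{i_0}}(0)$ equals $\lambda_{i_0}$. Taking $d\ge n$, I would use the monomial $M$ produced by the \emph{identity} assignment $\phi=\mathrm{id}$ in $F(\tau_{i_0})_{r_{i_0}}$, that is $M=\prod_{v}\big(\prod_{w\in C_v(\tau_{i_0})}\partial_{w}\big)f^v_{v}$; this is legitimate precisely because $d\ge n$ makes $\mathrm{id}:[n]\to\{1,\dots,d\}$ injective.

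The crux, and the step I expect to be the most delicate, is the coefficient computation. Since each $f^v$ sits in a single factor, a term of the contraction sum for $\tau_i$ can equal $M$ only if it matches $M$ factor by factor. Comparing output coordinates forces $\phi(v)=v$ for every $v$, hence $\phi=\mathrm{id}$; comparing the unordered multisets of derivative directions then forces $C_v(\tau_i)=C_v(\tau_{i_0})$ for every $v$, i.e.\ $\tau_i=\tau_{i_0}$ (which is also the only case compatible with the summation constraint $\phi(r_i)=r_{i_0}$). The matching assignment is then unique, so the coefficient of $M$ equals $1$ if $\tau_i=\tau_{i_0}$ and $0$ otherwise. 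Extracting this coefficient from the vanishing sum yields $\lambda_{i_0}=0$ (the $\tau_i$ being, as usual, pairwise distinct), and since $i_0$ is arbitrary all $\lambda_i$ vanish; thus $d\ge n$ suffices for ``$d$ large enough''.
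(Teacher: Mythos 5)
Your proof is correct, and $d\ge n$ is the right quantitative form of ``$d$ large enough'': the closed contraction formula is the standard unfolding of Definition~\ref{elementary_diff}, passing to a polynomial identity in the jet coordinates at $0$ is legitimate since every finite jet is realised by a polynomial, and because each $f^v$ occurs in exactly one factor of each term, the monomial $M$ receives the sole contribution $(\tau_{i_0},\phi=\mathrm{id})$, whence $\lambda_{i_0}=0$. For the comparison: the paper gives no proof of this proposition at all, citing it as a classical exercise from \cite{HNW}; your coefficient-extraction argument is exactly the classical solution in disguise, being equivalent to evaluating the relation on the dual test functions $f^v=\bigl(\prod_{w\in C_v(\tau_{i_0})}x_w\bigr)\vec{e}_v$ at $0$ --- the tree analogue of the dual-function argument the paper does spell out for the multi-index proposition via $f^m_i=x^{\varphi(i)}/(\varphi(i))!$.
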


One can easily show that this result does not hold if one fixes the dimension. However, in dimension $1$, elementary differentials admit another combinatorial description.

\begin{definition}
	A \emph{multi-index} is a couple $(V,\varphi)$ with $V$ a finite set and $\varphi:V\to\mathbb{N}$ such that
	\begin{equs} \label{populated_condition}
		 \sum_{v\in V}\varphi(v)=|V|-1.
	\end{equs} 
	We denote by $\mathrm{MI}(n)$ the set of multi-indices on the set $[n] =\{1,\dots,n\}$.
\end{definition}
We may notice that any rooted tree $\tau=(V,E,r)$ gives a multi-index $(V,\varphi)$ with $\varphi(v)$ the number of children of $v$.
A multi-index is \emph{linear} if for each $v$, $\varphi(v)=0$ or $1$; this is exactly the multi-index of a linear tree. Let $\mathrm{LMI}(n)$ the set of linear multi-indices in $\mathrm{MI}(n)$.
One can use a polynomial representation of the multi-indices. Let $(z_{(i,k)})_{(i,k) \in [n] \times \mathbb{N}}$ some abstract variables. A multi-index can be denoted by
\begin{equs}
	z^{\beta} =  \prod_{(i,k) \in [n] \times \mathbb{N} } z_{(i,k)}^{\beta(i,k)}
\end{equs}
where $ \beta : [n] \times \mathbb{N} \rightarrow \lbrace 0,1 \rbrace $ such that $\beta(i,k)$ is non-zero for only one value of $k$. If $z^{\beta}$ is a linear multi-index, one has $i_0\in[n]$ such that
\begin{equs}
	z^{\beta} = z_{(i_0,0)} \prod_{ i \in [n]\setminus\{i_0\}  } z_{(i,1)}.
\end{equs}
With this new notation, the identity \eqref{populated_condition} becomes
\begin{equs}
	\sum_{(i,k) \in [n] \times \mathbb{N}} k \beta(i,k)   = n-1,
\end{equs}
where $ |\beta| = 	\sum_{(i,k) \in [n] \times \mathbb{N}}  \beta(i,k) = n $. One sets
\begin{equs}
	|\beta |_i :=	\sum_{k \in \mathbb{N}}  \beta(i,k).
\end{equs}
The surjective map $ \pi $ moving from rooted trees to multi-indices can also be described using the polynomial notation of multi-indices. For $ \tau $ a rooted tree with nodes set $V$, $\pi$ is given by
\begin{equs}
	\pi(\tau) = \prod_{v \in V} z_{(v,\phi(v))}.
\end{equs}
Let us compute $\pi$ on an example
\begin{equs}
	\pi\left(
	\vcenter{\hbox{\begin{tikzpicture}
                %vertices
                \node (1) at (-0.4, 1.6) {};
                \node (2) at (-0.4, 0.8) {};
                \node (3) at (0, 0) {};
                \node (4) at (0.4, 0.8) {};
                %triangles
                %edges
                \draw[->,thick,shorten >= 2pt] (1)--(2);
                \draw[->,thick,shorten >= 2pt] (2)--(3);
                \draw[->,thick,shorten >= 2pt] (4)--(3);
                %dots
                %circles
                \draw[circle, fill=white] (1) circle [radius=6pt];
                \draw[circle, fill=white] (2) circle [radius=6pt];
                \draw[circle, fill=white] (3) circle [radius=6pt];
                \draw[circle, fill=white] (4) circle [radius=6pt];
                %labels
                \node at (1) {\scalebox{0.8}{$1$}};
                \node at (2) {\scalebox{0.8}{$2$}};
                \node at (3) {\scalebox{0.8}{$3$}};
                \node at (4) {\scalebox{0.8}{$4$}};
        \end{tikzpicture}}}
	\right)=z_{(1,0)}z_{(2,1)}z_{(3,2)}z_{(4,0)}.
\end{equs}

One can define elementary differentials $G : \mathrm{MI}(n)\to\Hom(\mathcal{C}_1^{\otimes n},\mathcal{C}_1)$ associated with the multi-indices.
It is given for a multi-index $z^{\beta} \in \mathrm{MI}(n)$ and $ f_1,...,f_n \in \mathcal{C}_1 $ by
\begin{equs} \label{elementary_diff_multi}
	G(z^{\beta})\left(f_1,...,f_n\right) = f_1^{(|\beta |_1)} \cdots f_n^{(|\beta |_n)},
\end{equs}
where for $f \in \mathcal{C}_1$, we denote its $k$-derivative by  $f^{(k)}$ .
\begin{proposition} \label{prp:multi_trees}
	In dimension $1$, the map $F$ factorises through the multi-indices, meaning that we have for every $\tau \in \mathrm{RT}(n)$
	\begin{equs}
		F(\tau) = G(\pi(\tau)).
	\end{equs}
\end{proposition}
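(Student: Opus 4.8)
The plan is to prove $F(\tau) = G(\pi(\tau))$ by induction on the number of nodes $n$, following the recursive structure of the definition of $F$ and the behavior of $\pi$ under the grafting operator $B_+^r$. The base case $n=1$ is immediate: a single-node tree $\tau = B_+^r()$ has $F(\tau)(f^r) = f^r$ (the empty product of derivatives times $f^r$), while $\pi(\tau) = z_{(r,0)}$, so $G(\pi(\tau))(f_r) = f_r^{(0)} = f_r$. The two agree.

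For the inductive step, first I would unwind the definition of $F$ in dimension $1$. When $d=1$, the sum $\sum_{j_1,\dots,j_k=1}^d$ collapses to the single term $j_1 = \dots = j_k = 1$, and each $\partial_{j_\ell}$ is just the ordinary derivative $\partial_1 = \tfrac{d}{dx}$. Writing $\tau = B_+^r(\tau_{v_1},\dots,\tau_{v_k})$, the recursion becomes
\begin{equs}
	F(\tau)\left((f^i)_{i\in\{1,\dots,n\}}\right) = \left(\prod_{\ell=1}^k F(\tau_{v_\ell})\left((f^i)_{i\in C_{v_\ell}}\right)\right)\cdot (f^r)^{(k)}.
\end{equs}
By the induction hypothesis applied to each strictly smaller subtree $\tau_{v_\ell}$, I may replace $F(\tau_{v_\ell})$ by $G(\pi(\tau_{v_\ell}))$, and by \eqref{elementary_diff_multi} this equals a product of derivatives $\prod_{i\in C_{v_\ell}} f_i^{(|\pi(\tau_{v_\ell})|_i)}$, where $|\pi(\tau_{v_\ell})|_i$ counts the children of vertex $i$ inside the subtree $\tau_{v_\ell}$.

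The key bookkeeping step is then to check that assembling these contributions reproduces exactly $G(\pi(\tau))$. Here I would use the explicit formula $\pi(\tau) = \prod_{v\in V} z_{(v,\phi(v))}$, where $\phi(v)$ is the number of children of $v$ in the whole tree $\tau$. The point is that every non-root vertex $i\neq r$ lies in exactly one subtree $\tau_{v_\ell}$, and its number of children is the same whether computed in $\tau_{v_\ell}$ or in $\tau$; meanwhile the root $r$ contributes the extra factor $(f^r)^{(k)}$, matching $|\pi(\tau)|_r = k$ since $r$ has exactly $k$ children. Thus, multiplying the subtree contributions together with the root factor yields precisely $\prod_{i=1}^n f_i^{(|\pi(\tau)|_i)} = G(\pi(\tau))\left(f_1,\dots,f_n\right)$, which is what we want.

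The only genuine obstacle is the indexing: one must be careful that the children-count of each vertex is preserved when passing from the global tree to its subtrees, and that the disjoint union $\{r\} \sqcup C_{v_1} \sqcup \dots \sqcup C_{v_k}$ (extended over the respective subtrees) partitions $[n]$ so that each factor $f_i^{(\cdot)}$ appears exactly once with the correct order of derivative. This is a routine but slightly delicate verification that $|\pi(\tau)|_i = \phi(i)$ for all $i$, which follows directly from the definition of $\pi$ and of the multi-index associated to a rooted tree. Everything else is a direct substitution once the induction hypothesis and the collapse of the $d=1$ sum are in place.
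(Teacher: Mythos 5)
Your proposal is correct and follows essentially the same route as the paper's proof: induction on the size of the tree via the decomposition $\tau = B_+^r(\tau_{v_1},\dots,\tau_{v_k})$, collapse of the $d=1$ sum, and assembly of the subtree multi-indices using $\pi(\tau)=\prod_{v\in V} z_{(v,\phi(v))}$. The only caveat is notational: the inputs to $F(\tau_{v_\ell})$ and the product of derivatives should range over $A_{v_\ell}$, all vertices of the subtree above $v_\ell$, rather than the children set $C_{v_\ell}$ (a slip inherited from the paper's own Definition~\ref{elementary_diff}), but your later remark that the vertex sets of the subtrees together with $r$ partition $[n]$ shows you intend the correct indexing.
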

\begin{proof} 
	We proceed by induction on the size of the rooted tree. Let $\tau \in  \mathrm{RT}(n) $ with $\tau = B_{+}^r(\tau_{v_1},...,\tau_{v_k})$, we recall that $A_v$ is the set of vertices above $v$ in $\tau$. 
	Let $f_1,...,f_n \in \mathcal{C}_1$, one has
	\begin{equs}
		F(\tau)\left(f_1,...,f_n\right) & = F(\tau_{v_1})\left((f_i)_{i\in A_{v_1}}\right)\dots F(\tau_{v_k})\left(((f_i)_{i\in A_{v_k}}\right)
		f_r^{(k)}
	\end{equs}
	One notices that
	  $ k = \varphi(r) $
	   and by applying the induction hypothesis, one has
	\begin{equs}
		F(\tau_{v_i})\left((f_j)_{j\in A_{v_i}}\right)  &  = G(\pi(\tau_{v_i}))\left( (f_j)_{j\in A_{v_i}} \right)
		\\ &= G( \prod_{v \in A_{v_i}} z_{(v,\phi(v))})\left((f_j)_{j\in A_{v_i}}\right).
	\end{equs}
	By definition of $G$, it is easy to see that
	\begin{equs}
			F(\tau)\left(f_1,...,f_n\right) & = \prod_{i=1}^{k} G( \prod_{v \in A_{v_i}} z_{(v,\phi(v))})\left((f_j)_{j\in A_{v_i}}\right) f_r^{(k)}
			\\ &=  G( z_{(r,k)} \prod_{i=1}^{k} \prod_{v \in A_{v_i}} z_{(v,\phi(v))})\left(f_1,...,f_n\right)
			\\ & = G(\pi(\tau))\left(f_1,...,f_n\right)
	\end{equs}
	which concludes the proof.
\end{proof}

Let us illustrate this proposition with an example, let $f,g,h,p\in\mathcal{C}_1$
\begin{equs}
        F\left(
        \vcenter{\hbox{\begin{tikzpicture}
                %vertices
                \node (1) at (-0.4, 1.6) {};
                \node (2) at (-0.4, 0.8) {};
                \node (3) at (0, 0) {};
                \node (4) at (0.4, 0.8) {};
                %triangles
                %edges
                \draw[->,thick,shorten >= 2pt] (1)--(2);
                \draw[->,thick,shorten >= 2pt] (2)--(3);
                \draw[->,thick,shorten >= 2pt] (4)--(3);
                %dots
                %circles
                \draw[circle, fill=white] (1) circle [radius=6pt];
                \draw[circle, fill=white] (2) circle [radius=6pt];
                \draw[circle, fill=white] (3) circle [radius=6pt];
                \draw[circle, fill=white] (4) circle [radius=6pt];
                %labels
                \node at (1) {\scalebox{0.8}{$1$}};
                \node at (2) {\scalebox{0.8}{$2$}};
                \node at (3) {\scalebox{0.8}{$3$}};
                \node at (4) {\scalebox{0.8}{$4$}};
        \end{tikzpicture}}}
	\right)(f,g,h,p)=G\left(z_{(1,0)}z_{(2,1)}z_{(3,2)}z_{(4,0)}\right)(f,g,h,p)=fg'ph''.
\end{equs}

Once again, this combinatorial description of elementary differential is faithful thanks to the following classical result:

\begin{proposition}
		Let $m_i \in \mathrm{MI}(n)  $ and $\lambda_i \in \mathbb{R}$. One has 
\begin{equs}
	\forall f_1,...,f_n \in \mathcal{C}_1, \quad \sum_{i} \lambda_i G(m_i)\left(f_1,...,f_n\right) = 0 \quad \Longleftrightarrow \quad \forall i, \quad \lambda_i = 0.
\end{equs}
\end{proposition}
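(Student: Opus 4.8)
The plan is to reduce the functional identity to a polynomial identity by testing against exponentials, which diagonalise differentiation. First I would record the single structural fact that is needed. By the definition \eqref{elementary_diff_multi} of $G$, each $G(m)$ is the multilinear map
\begin{equs}
	(f_1,\dots,f_n)\longmapsto \prod_{j=1}^n f_j^{(a_j(m))},
\end{equs}
where $a_j(m)$ is the derivative order attached to the node $j$ by the multi-index $m$ (namely the degree $\varphi(j)$ of that node). Since a multi-index on $[n]$ is precisely the datum of the tuple of node-degrees $(\varphi(1),\dots,\varphi(n))$ subject to $\sum_j\varphi(j)=n-1$, the assignment $m\mapsto(a_1(m),\dots,a_n(m))$ is injective: distinct multi-indices yield distinct derivative-order vectors. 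I would also assume without loss of generality that the $m_i$ appearing in the sum are pairwise distinct, grouping equal terms otherwise.

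Next I would plug in the test functions $f_j(x)=e^{t_jx}$ for free real parameters $t_1,\dots,t_n$. Because $f_j^{(a)}(x)=t_j^a e^{t_j x}$, one computes
\begin{equs}
	G(m_i)\left(f_1,\dots,f_n\right)(x) = \Bigl(\prod_{j=1}^n t_j^{a_j(m_i)}\Bigr)\, e^{(t_1+\dots+t_n)x}.
\end{equs}
Evaluating the hypothesis $\sum_i\lambda_i G(m_i)(f_1,\dots,f_n)=0$ at $x=0$ then yields the scalar identity $\sum_i \lambda_i \prod_{j=1}^n t_j^{a_j(m_i)}=0$, valid for every $(t_1,\dots,t_n)\in\mathbb{R}^n$.

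Finally I would read this as a polynomial identity in the indeterminates $t_1,\dots,t_n$. The monomials $\prod_{j} t_j^{a_j(m_i)}$ are pairwise distinct, by the injectivity recorded above, hence linearly independent in $\mathbb{R}[t_1,\dots,t_n]$; a polynomial vanishing on all of $\mathbb{R}^n$ is the zero polynomial, so every coefficient $\lambda_i$ must vanish. The reverse implication is immediate.

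I do not expect a genuine obstacle here: the only point requiring a moment's care is that all multi-indices in $\mathrm{MI}(n)$ share the same total order $n-1$, so the resulting monomials all have total degree $n-1$ and the exponential factor $e^{(t_1+\dots+t_n)x}$ is the same for every term. This causes no collision, since monomials with distinct multi-degrees remain distinct; it merely reflects that the common exponential factor can be divided out, leaving the clean monomial identity above.
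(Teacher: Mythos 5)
Your proof is correct, but it takes a genuinely different route from the paper. The paper's argument is a biorthogonality construction: for each multi-index $m=([n],\varphi)$ it picks the tailored test functions $f^m_i = x^{\varphi(i)}/(\varphi(i))!$, for which $G(m')(f^m_1,\dots,f^m_n)$ equals $1$ when $m'=m$ and $0$ otherwise (if $m'\neq m$, since both degree vectors sum to $n-1$, some node satisfies $\varphi'(i)>\varphi(i)$ and the corresponding factor vanishes). Plugging this single family into the hypothesis isolates $\lambda_{i_0}$ directly, one evaluation per multi-index, with no polynomial identity needed. You instead use the one universal family $f_j(x)=e^{t_jx}$, which diagonalises differentiation and converts the hypothesis into the polynomial identity $\sum_i \lambda_i \prod_j t_j^{a_j(m_i)}=0$ on $\mathbb{R}^n$; linear independence of distinct monomials then finishes. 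Both arguments hinge on the same injectivity ($m$ is exactly its degree vector, constrained by $\sum_j \varphi(j)=n-1$), and your WLOG reduction to pairwise distinct $m_i$ matches the proposition's implicit assumption. The paper's proof is slightly more elementary (no appeal to vanishing polynomials being zero) and exhibits an explicit dual family, which is conceptually close to exhibiting $G(\mathrm{MI}(n))$ as a basis; your exponential substitution is more uniform and is the kind of argument that transfers readily to similar independence statements where a per-object dual family is harder to write down. One cosmetic caution: the paper's notation $|\beta|_i$ is what you call $a_i(m)=\varphi(i)$, so your identification of the derivative orders is the intended reading.
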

\begin{proof}
	For $m=([n],\varphi)\in\mathrm{MI}(n)$, let $(f^m_i)_{i\in\{1,\dots,n\} }$ defined by 
	$$f^m_i=\frac{x^{\varphi(i)}}{(\varphi(i))!}.$$ 
	Let $m'\in\mathrm{MI}(n)$. It is clear that $G(m')(f^m_1,\dots,f^m_n)=1$ if $m=m'$, and $0$ if $m'\neq m$. 
	\end{proof}

	We already mentioned ``combinatorial descriptions'' of elementary differentials. However, since those are the objects we would like to study, let us give a formal definition. 	First, one can notice that since elements of $\mathcal{W}_d(n)$ are $n$-linear maps, $\mathcal{W}_d(n)$ is naturally a representation of $\mathfrak{S}_n$ by permutation of the $n$-inputs. Then, one has
	
	\begin{definition} \label{Def_combinatorial}
		A \emph{combinatorial description} of $\mathcal{W}_d$ is a couple $(\mathrm{CD}_d,H_d)$ with $(\mathrm{CD}_d(n))_{n\in\mathbb{N}}$ a sequence of sets such that $\mathrm{CD}_d(n)$ is endowed with a action of $\mathfrak{S}_n$, and $(H_d^n)_{n\in\mathbb{N}}$ a sequence of maps $H_d^n:\mathrm{CD}_d(n)\to\mathcal{W}_d(n)$ such that $H_d^n$ is $\mathfrak{S}_n$-equivariant, and the set $H_d^n(\mathrm{CD}_d(n))$ generates $\mathcal{W}_d$.
		A combinatorial description is \emph{faithful} if for all $n\in\mathbb{N}$, the set $H_d^n(\mathrm{CD}_d(n))$ is a basis of $\mathcal{W}_d(n)$.
	A combinatorial description is \emph{compatible with $\mathrm{RT}$} if we have a sequence of equivariant surjective maps $p_n:\mathrm{RT}(n)\to\mathrm{CD}_d(n)$ such that for every $ \tau \in \mathrm{RT}(n) $
	\begin{equs}
		F(\tau) = H^n_d( p_n(\tau) ).
	\end{equs} 
		A combinatorial description is \emph{compatible with $\mathrm{MI}$} if we have a sequence of equivarient surjective maps $q_n:\mathrm{CD}_d(n)\to\mathrm{MI}(n)$ such that for $a \in \mathrm{CD}_d(n) $
		\begin{equs}
			\pi_d(H_d^n( a )) = G( q_n(a) ).
		\end{equs}
		where $\pi_d$ is the canonical projection $\mathcal{W}_d\to\mathcal{W}_1$.
	\end{definition}
	
	By definition, the rooted trees provides a combinatorial description of the elementary differential in all dimensions. Moreover, the multi-indices are a faithful combinatorial description of the elementary differentials in dimension $1$. It would be very interesting to get combinatorial descriptions in dimension $d\neq 1$, and even more if those are faithful or compatible with $\mathrm{RT}$ or $\mathrm{MI}$.
	In the next sections, we discuss these points and provide negative results
	such as Theorems \ref{thm:1} and \ref{thm:2}.

	We should point out that those results do not state that for $d\neq 1$, $\mathcal{W}_d$ does not admit any combinatorial description (which is false since $\mathrm{RT}$ is always a combinatorial description) nor any \textit{faithful} combinatorial description. 
	However, such a faithful combinatorial description cannot be compatible with the rooted trees nor even the multi-indices, and thus should be quite involved, and possibly unusable in practice, if it even exists.
	
\section{Combinatorial description compatible with rooted trees}
\label{Sec::3}
	
	The main theorem of this section is the following
	\begin{theorem} \label{thm:1}
		In dimension $d\neq 1$, $\mathrm{RT}$ is the only combinatorial description of $\mathcal{W}_d$ compatible with $\mathrm{RT}$.
	\end{theorem}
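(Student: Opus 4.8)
The plan is to show that any combinatorial description $(\mathrm{CD}_d,H_d)$ compatible with $\mathrm{RT}$ is forced to be $\mathrm{RT}$ itself. Compatibility (Definition \ref{Def_combinatorial}) supplies, for each $n$, an $\mathfrak{S}_n$-equivariant surjection $p_n\colon\mathrm{RT}(n)\to\mathrm{CD}_d(n)$ with $F=H_d^n\circ p_n$. Since $p_n$ is already surjective and equivariant, it is an isomorphism of $\mathfrak{S}_n$-sets as soon as it is injective, and then $H_d^n=F\circ p_n^{-1}$ is determined, so the description coincides with $\mathrm{RT}$. Now if $p_n(\tau_1)=p_n(\tau_2)$ then $F(\tau_1)=H_d^n(p_n(\tau_1))=H_d^n(p_n(\tau_2))=F(\tau_2)$, so everything reduces to proving that for $d\neq 1$ the map $F\colon\mathrm{RT}(n)\to\mathcal{W}_d(n)$ is \emph{injective}. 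Conversely, any failure of injectivity would let one quotient $\mathrm{RT}(n)$ by the $\mathfrak{S}_n$-stable relation $F(\tau_1)=F(\tau_2)$ and produce a genuinely different compatible description; so injectivity of $F$ is exactly the content of the theorem.

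The first step is to recover the multi-index of a tree from its elementary differential. Projecting to dimension one via $\pi_d\colon\mathcal{W}_d\twoheadrightarrow\mathcal{W}_1$, and using that these projections are compatible with $F$ together with Proposition \ref{prp:multi_trees}, one gets $\pi_d(F(\tau))=G(\pi(\tau))$ for every $\tau$. Faithfulness of $G$ on multi-indices (the $G(m)$ are linearly independent, hence distinct for distinct $m$) then yields the implication $F(\tau_1)=F(\tau_2)\Rightarrow\pi(\tau_1)=\pi(\tau_2)$. Thus it suffices to prove injectivity of $F$ on each fiber of $\pi$, i.e. among trees sharing a fixed multi-index.

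For the second step I would invoke the two structural facts announced for this section. Proposition \ref{prp:transitiveaction} says that if $\tau_1$ and $\tau_2$ have the same multi-index, then there is a node-bijection $\sigma$, acting as in Definition \ref{def_sigma}, with $\sigma.\tau_1=\tau_2$. Applying the rigidity of Proposition \ref{prp:geofixed} to $\tau_1$ and $\sigma$, from $F(\tau_2)=F(\sigma.\tau_1)=F(\tau_1)$ one obtains $\sigma.\tau_1=\tau_1$, that is $\tau_1=\tau_2$. This closes the argument: $F$ is injective, $p_n$ is a bijection, and the description is $\mathrm{RT}$.

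The crux, and the only place where $d\neq 1$ enters, is Proposition \ref{prp:geofixed}, the equivalence $F(\sigma.\tau)=F(\tau)\Leftrightarrow\sigma.\tau=\tau$; the nontrivial direction asserts that two distinct rewirings of the same arity data are separated by $F$. In dimension one this fails, since $F$ records only the derivative orders $(\varphi(v))_v$ and is blind to the tree shape. To establish it for $d\geq 2$ I would read the shape off $F(\tau)$ by feeding in functions whose Taylor data at a point realize, at each node $v$, a prescribed symmetric $\varphi(v)$-tensor $T_v$; then $F(\tau)$ becomes the contraction of the $T_v$ along the edges of $\tau$, a tensor network whose combinatorics is recoverable for generic $T_v$ precisely when $d\geq 2$ (non-commutativity is what removes the degeneracy present at $d=1$). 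Making this separation rigorous, by exhibiting a supply of test functions that pins down the parent–child incidences of $\tau$, is the main obstacle, and it is exactly what Proposition \ref{prp:geofixed} is designed to carry.
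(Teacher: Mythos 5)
Your proposal is correct and takes essentially the same route as the paper: after making explicit the reduction of the theorem to injectivity of $F$ on $\mathrm{RT}(n)$ (which the paper handles tersely, additionally ruling out proportionality $\lambda F(\tau_1)=F(\tau_2)$ by the same dimension-$1$ projection), you equate multi-indices via Proposition~\ref{prp:multi_trees} and the faithfulness of $G$, then combine Propositions~\ref{prp:transitiveaction} and~\ref{prp:geofixed} exactly as the paper's proof does. Your concluding tensor-contraction sketch for Proposition~\ref{prp:geofixed} is not needed, since that proposition is already established in the paper by explicit test functions of the form $y^k\vec{e}_x+x^n\vec{e}_y$, and you correctly identify it as the crux where $d\neq 1$ enters.
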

	 Since any linear relation of the elementary differentials that holds in dimension $d$ also holds in dimension $d'\leq d$, it is enough to work in dimension $d=2$. 
	 In the sequel, we will use the following notation for $f \in \mathcal{C}_2$ and $(x,y) \in \mathbb{R}^2$: 
	 \begin{equs}
	 	f(x,y) = f_1(x,y) \vec{e}_x + f_2(x,y) \vec{e}_y
	 \end{equs}
	 where $ \vec{e}_x  = (1,0) $ and $ \vec{e}_y = (0,1) $.
	 Before giving the proof, we need a couple of definitions and propositions.
	
	\begin{definition} \label{def_sigma}
		Let $\tau=(V,E)$ a rooted tree with $r$ its root, and $\sigma$ a bijection of $V$. We define $E_\sigma=\{(v,\sigma(w))\mid (v,w)\in E\}$. 
		Then the oriented graph $\sigma.\tau:=(V,E_\sigma)$ is an oriented graph such that each vertex admits a unique outgoing edge, except $\sigma(r)$, which admits none. 
		However, $(V,E_\sigma)$ is not necessarily weakly connected (by Remark~\ref{rmk:aromatictrees}, $(V,E_\sigma)$ is an aromatic tree but not necessarily a rooted tree). 
		We will always consider bijection $\sigma$ such that $(V,E_\sigma)$ is weakly connected and thus is a rooted tree.
	\end{definition}

	\begin{remark}
		We should notice that if $\tau$ is an aromatic tree, then $\sigma.\tau$ is also an aromatic tree. 
		In fact, aromatic trees are the correct context for the above definition since it provides us with an action of $\mathfrak{S}_n$ on $\mathrm{AT}(n)$ which is not the usual action by permutation of the vertices.
	\end{remark}
	
	Let us compute $\sigma.\tau$ on an example with $\sigma=(1\;3\;2)$
	\begin{equs}
		(1\;3\;2).\left(\vcenter{\hbox{\begin{tikzpicture}
                %vertices
                \node (1) at (-0.4, 1.6) {};
                \node (2) at (-0.4, 0.8) {};
                \node (3) at (0, 0) {};
                \node (4) at (0.4, 0.8) {};
                %triangles
                %edges
                \draw[->,thick,shorten >= 2pt] (1)--(2);
                \draw[->,thick,shorten >= 2pt] (2)--(3);
                \draw[->,thick,shorten >= 2pt] (4)--(3);
                %dots
                %circles
                \draw[circle, fill=white] (1) circle [radius=6pt];
                \draw[circle, fill=white] (2) circle [radius=6pt];
                \draw[circle, fill=white] (3) circle [radius=6pt];
                \draw[circle, fill=white] (4) circle [radius=6pt];
                %labels
                \node at (1) {\scalebox{0.8}{$1$}};
                \node at (2) {\scalebox{0.8}{$2$}};
                \node at (3) {\scalebox{0.8}{$3$}};
                \node at (4) {\scalebox{0.8}{$4$}};
		\end{tikzpicture}}}\right)\;=\;
                \vcenter{\hbox{\begin{tikzpicture}
                %vertices
                \node (1) at (-0.4, 1.6) {};
                \node (2) at (0, 0) {};
                \node (3) at (0, 0.8) {};
                \node (4) at (0.4, 1.6) {};
                %triangles
                %edges
                \draw[->,thick,shorten >= 2pt] (1)--(3);
                \draw[->,thick,shorten >= 2pt] (3)--(2);
                \draw[->,thick,shorten >= 2pt] (4)--(3);
                %dots
                %circles
                \draw[circle, fill=white] (1) circle [radius=6pt];
                \draw[circle, fill=white] (2) circle [radius=6pt];
                \draw[circle, fill=white] (3) circle [radius=6pt];
                \draw[circle, fill=white] (4) circle [radius=6pt];
                %labels
                \node at (1) {\scalebox{0.8}{$1$}};
                \node at (2) {\scalebox{0.8}{$2$}};
                \node at (3) {\scalebox{0.8}{$3$}};
                \node at (4) {\scalebox{0.8}{$4$}};
		\end{tikzpicture}}}
	\end{equs}
	This computation can be done inductively.
	Since the root of $\tau$ is $3$, and $\sigma(3)=2$, the root of $\sigma.\tau$ is $2$. 
	In $\tau$, the child of $2$ is $1$, thus in $\sigma.\tau$, the child of $2$ is $\sigma(1)$ which is $3$. 
	Finally, the children of $3$ in $\tau$ are $2$ and $4$, thus the children of $3$ in $\sigma.\tau$ are $\sigma(2)$ and $\sigma(4)$, which are $1$ and $4$.

	The previous definition does not provide the canonical action of the permutation on the rooted trees; however, we may notice that $\tau$ and $\sigma.\tau$ have the same multi-index.
	
	\begin{proposition} \label{prp:geofixed}
		Let $\tau=(V,E)$ a rooted tree, and $\sigma$ such that $\sigma.\tau$ is a rooted tree and $F(\tau)=F(\sigma.\tau)$. 
		Then for any $v\in V$, we have that $\sigma(C_v)=C_v$. Thus one has $E_\sigma=E$, and
		\begin{equs}
			F(\sigma . \tau) = F(\tau)  \quad \Longleftrightarrow \quad \sigma . \tau = \tau.
		\end{equs}
	\end{proposition}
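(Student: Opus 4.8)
The plan is to probe the identity $F(\tau)=F(\sigma.\tau)$ with a family of vector fields on which the elementary differential factorises completely, turning the hypothesis into a polynomial identity that can be dissected by unique factorisation. Concretely, work in $d=2$ (or any $d\ge 2$) and attach to each node $v$ free data $a^v,b^v\in\mathbb{R}^d$; then plug in the exponential vector fields $f^v(x)=e^{\langle a^v,x\rangle}\,b^v$, where $\langle\cdot,\cdot\rangle$ is the Euclidean pairing. Differentiating such an $f^v$ merely pulls down a scalar factor, so the whole tensor network defining $F(\tau)$ collapses edge by edge and becomes a product of scalars.

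First I would prove, by induction on the size of $\tau$ using the recursion in Definition \ref{elementary_diff}, the closed formula
\[
F(\tau)\big((f^v)_{v\in V}\big)=e^{\langle\sum_{u\in V}a^u,\,x\rangle}\;b^r\prod_{(v,w)\in E}\langle a^v,b^w\rangle,
\]
where $r$ is the root: each edge $(v,w)$ contributes the scalar $\langle a^v,b^w\rangle$ (the derivative covector of the parent paired with the value vector of the child), and the root contributes the vector $b^r$. Evaluating at $x=0$ and applying the same formula to $\sigma.\tau$, whose edge set is $E_\sigma=\{(v,\sigma(w)):(v,w)\in E\}$ with root $\sigma(r)$, the hypothesis $F(\tau)=F(\sigma.\tau)$ becomes the identity of $\mathbb{R}^d$-valued polynomials
\[
b^r\prod_{(v,w)\in E}\langle a^v,b^w\rangle=b^{\sigma(r)}\prod_{(v,w)\in E}\langle a^v,b^{\sigma(w)}\rangle
\]
in the variables $(a^v,b^v)_{v\in V}$.

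Next I would exploit that the polynomial ring is a UFD and that each form $\langle a^v,b^w\rangle=\sum_i a^v_i b^w_i$ is irreducible (a nondegenerate quadratic form of rank $2d\ge 4$ in the disjoint variable blocks $a^v$ and $b^w$). The left-hand side is parallel to $b^r$ and the right-hand side to $b^{\sigma(r)}$; wedging the common value with $b^{\sigma(r)}$ gives $S_\tau\,(b^r\wedge b^{\sigma(r)})=0$ with $S_\tau:=\prod_{(v,w)\in E}\langle a^v,b^w\rangle\neq 0$, so $b^r\wedge b^{\sigma(r)}=0$ as a polynomial, forcing $\sigma(r)=r$. Cancelling $b^r$ leaves $\prod_{w\neq r}\langle a^{\mathrm{par}(w)},b^w\rangle=\prod_{w\neq r}\langle a^{\mathrm{par}(w)},b^{\sigma(w)}\rangle$, where $\mathrm{par}$ denotes the parent map. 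For each non-root $u$, the unique irreducible factor containing the block $b^u$ is $\langle a^{\mathrm{par}(u)},b^u\rangle$ on the left and $\langle a^{\mathrm{par}(\sigma^{-1}(u))},b^u\rangle$ on the right; unique factorisation forces $\mathrm{par}(u)=\mathrm{par}(\sigma^{-1}(u))$, i.e. $\mathrm{par}(\sigma(w))=\mathrm{par}(w)$ for every $w$. Hence $\sigma$ preserves each sibling class $C_v=\{w:\mathrm{par}(w)=v\}$, giving $\sigma(C_v)=C_v$, whence $E_\sigma=E$ and $\sigma.\tau=\tau$. The reverse implication is immediate, as $\sigma.\tau=\tau$ makes the two maps literally identical.

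The main obstacle I anticipate is establishing the plane-wave factorisation exactly and justifying the reduction to a polynomial identity: one must check that exponential vector fields are legitimate inputs (they are smooth) and that the shared exponential prefactor cancels cleanly, so that the surviving combinatorial content is precisely the edge product. Once that formula is in hand the argument is pure unique-factorisation bookkeeping, and the only place where $d\ge 2$ is genuinely used is the irreducibility of $\langle a^v,b^w\rangle$, which fails for $d=1$ (there the product degenerates into a function of the multi-index alone), consistent with the standing assumption that the dimension is not $1$.
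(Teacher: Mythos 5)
Your proof is correct, but it takes a genuinely different route from the paper. The paper first reduces to $d=2$ (via the canonical projection $\mathcal{W}_d\twoheadrightarrow\mathcal{W}_2$) and then probes the identity \emph{locally}, one vertex $v$ at a time: it sets $f^v=f^{v_i}=y^k\vec{e}_x+x^n\vec{e}_y$ on $v$ and its $k$ children and $f^w=x^n\vec{e}_x$ everywhere else, and detects $\sigma(C_v)=C_v$ by tracking whether the $\vec{e}_x$-component of the output depends on $y$. You instead prove a \emph{global} closed formula at exponential inputs $f^v=e^{\langle a^v,x\rangle}b^v$, namely $F(\tau)\bigl((f^v)_v\bigr)=e^{\langle\sum_u a^u,x\rangle}\,b^r\prod_{(v,w)\in E}\langle a^v,b^w\rangle$, and then read off the whole parent map at once by unique factorisation; your verification details are sound (the induction through Definition~\ref{elementary_diff} does yield this formula with the paper's (parent, child) edge convention; $\sum_{i=1}^d a_ib_i$ is a rank-$2d$ quadratic form, hence irreducible exactly when $d\geq 2$; distinct edges give non-associate irreducibles in disjoint variable blocks; and the wedge and cancellation steps are legitimate in the polynomial integral domain). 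Your approach buys uniformity in $d\geq 2$ without the reduction step, and a clean structural formula. What the paper's approach buys, and what you would lose: its test functions use at most \emph{two} distinct functions, and this economy is exploited verbatim later — Theorem~\ref{thm_1_b} extends Theorem~\ref{thm:1} to $2$-weak combinatorial descriptions precisely ``by inspecting the proof'' and noting only two distinct inputs were used. Your argument needs the $n$ parameters $(a^v,b^v)$ to be generically distinct (otherwise factors collide and the UFD bookkeeping degenerates), so it proves the proposition as stated but would not directly yield that refinement. Similarly, the paper's remark extends the proposition to aromatic trees with small modifications of the same test functions, whereas your closed formula changes on graphs with cycles (cycle factors become traces) and would require separate treatment.
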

	
	\begin{proof}
		Let $v\in V$ a vertex of $\tau$, and $C_v=\{v_1,\dots,v_k\}$.
		One sets 
		\begin{equs}
		        f^v= f^{v_i}= y^k\vec{e}_x + x^n\vec{e}_y, \, \, \, \forall i\in [k],
			\quad 
			f^w=x^n\vec{e}_x, \, \, \, \forall w\in V\setminus\{v,v_1,\dots,v_k\}.
		\end{equs}
		It is clear that $F(\tau)((f^v)_{v\in V})=g(x)\vec{e}_x+h(x,y)\vec{e}_y$ with $g\neq 0$ for $n$ large enough. Moreover, $h\neq 0$ if and only if $v$ is the root.  
		This is due to the $k$ derivatives $ \partial_y $ coming from the $f^{v_i}$ that act on $f^v$, thus no $y$ from $f^v$ will end up in $g$. 
		Moreover, no $y$ of $f^{v_i}$ will end up in $g$, since the $\vec{e}_x$ component of $f^{v_i}$ will be multiplied with the $\vec{e}_y$ component of $f^v$, and either vanish if $v$ is not the root, or end up in $h(x,y)\vec{e}_y$ if $v$ is the root.

		If $\sigma(v_i)\notin C_v$, then the $\vec{e}_x$ component of $F(\sigma.\tau)((f^w)_{w\in V})$ will depend on $y$. Indeed, the only way to get a function that does not depend on $y$ is to have the $k+1$ functions $y^k\vec{e}_x + x^n\vec{e}_y$ forming a corolla, and thus that $\sigma(C_v)=C_v$. 

		Thus if $F(\tau)=F(\sigma.\tau)$, for each vertex $v$ we have $(v,w)\in E\Leftrightarrow (v,w)\in E_\sigma$, thus $E=E_\sigma$, and $\tau=\sigma.\tau$.
	\end{proof}

	\begin{remark}
		We should notice that this proposition is still true if one assumes that $\tau=(V,E)$, and $\sigma.\tau$ are aromatic trees. Indeed, if $(v,v)\notin E$ then the same proof holds. If $(v,v)\in E$, then setting
                \begin{equs}
                        f^v= x^ny\vec{e}_y , 
                        \quad 
                        f^w=x^n\vec{e}_x, \, \, \, \forall w\in V\setminus\{v\},
                \end{equs}
		ensures that $\sigma(v)=v$. Let us choose $v'\in C_v$, setting
                \begin{equs}
			f^v= f^{v'}= x^ny^2\vec{e}_y,
                        \quad
                        f^w=x^n\vec{e}_x, \, \, \, \forall w\in V\setminus\{v,v'\},
                \end{equs}
		ensures that $\sigma(v')\in C_v$.
	\end{remark}
	
	\begin{proposition} \label{prp:transitiveaction}
		Let $\tau_1=(V,E_1),\tau_2=(V,E_2)$ two rooted trees with the same multi-index (and thus the same set of vertices). Then, there exists a bijection $\sigma$ of $V$ such that $\sigma.\tau_1=\tau_2$.
	\end{proposition}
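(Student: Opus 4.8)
The plan is to use that the operation $\sigma.\tau$ keeps the \emph{parent} endpoint of every edge fixed and permutes only the \emph{child} endpoint, so that it acts independently on the sets of children. Write $C_v^{(1)}$ and $C_v^{(2)}$ for the sets of children of $v$ in $\tau_1$ and $\tau_2$ respectively, and $r_1,r_2$ for their roots. By definition the multi-index of $\tau_i$ records, at each vertex, its number of children; hence $\tau_1$ and $\tau_2$ having the same multi-index is \emph{exactly} the statement that $|C_v^{(1)}|=|C_v^{(2)}|$ for every $v\in V$. Denote this common value by $\varphi(v)$.

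Next I would record the one structural fact that makes everything fit together: in a rooted tree every non-root vertex has a unique parent, so the family $\bigl(C_v^{(1)}\bigr)_{v\in V}$ is a partition of $V\setminus\{r_1\}$, and likewise $\bigl(C_v^{(2)}\bigr)_{v\in V}$ partitions $V\setminus\{r_2\}$. Thus the two trees cut the vertex set (minus the respective root) into blocks indexed by the same labels $v$, and the block attached to $v$ has the same size $\varphi(v)$ on both sides.

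The bijection $\sigma$ is then built blockwise. For each $v$ pick any bijection $\sigma_v\colon C_v^{(1)}\to C_v^{(2)}$, which exists since the two blocks have equal cardinality; set $\sigma(r_1)=r_2$ and let $\sigma(w)=\sigma_v(w)$ for the unique $v$ with $w\in C_v^{(1)}$. Because the blocks $C_v^{(1)}$ are disjoint and exhaust $V\setminus\{r_1\}$, while the blocks $C_v^{(2)}$ are disjoint and exhaust $V\setminus\{r_2\}$, the map $\sigma$ is a well-defined bijection of $V$. To conclude, one checks $(E_1)_\sigma=E_2$: every edge $(v,w)\in E_1$ has $w\in C_v^{(1)}$, hence $\sigma(w)\in C_v^{(2)}$ and $(v,\sigma(w))\in E_2$, giving $(E_1)_\sigma\subseteq E_2$; since $(v,w)\mapsto(v,\sigma(w))$ is injective, the cardinality count $|(E_1)_\sigma|=|E_1|=|V|-1=|E_2|$ upgrades this to an equality, so $\sigma.\tau_1=\tau_2$.

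I do not anticipate a genuine obstacle: the whole content is the observation that equal multi-indices force the parent-indexed blocks of children to agree in size, after which the edges correspond under a parent-preserving relabelling of the children. The only point requiring a little care is the \emph{global} well-definedness of $\sigma$, namely that the independent choices $\sigma_v$ glue into a single bijection of $V$; this is guaranteed precisely because each non-root vertex lies in exactly one child-block, so no vertex ever receives two conflicting assignments.
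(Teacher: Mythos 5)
Your proof is correct and takes essentially the same route as the paper: set $\sigma(r_1)=r_2$ and, for each vertex $v$, choose an arbitrary bijection between its sets of children in $\tau_1$ and $\tau_2$, which have equal size because the trees share the same multi-index. Your explicit checks — that the child-blocks partition $V\setminus\{r_i\}$ so $\sigma$ is globally well-defined, and the cardinality argument giving $(E_1)_\sigma=E_2$ — simply spell out what the paper's proof states as ``clear from the construction.''
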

	
	\begin{proof}
		Let us construct $\sigma$. Let $r_1,r_2$ the roots of $\tau_1,\tau_2$, we set $\sigma(r_1)=r_2$,
			 For any $v\in V$, let $\{v_1,\dots,v_k\}$ its children in $\tau_1$, and $\{w_1,\dots,w_k\}$ its children in $\tau_2$, and we set $\sigma(v_i)=w_i$.
		
		Since any vertex is either the root or the child of another vertex, $\sigma$ is well-defined. We have, in fact, several choices for $\sigma$ depending on the order we choose for the children of $v$ for each vertex $v\in V$.
		It is clear from the construction that $\sigma.\tau_1=\tau_2$.
	\end{proof}
	
	\begin{remark}
		We should notice that the previous proposition still holds if one assumes that $\tau_1,\tau_2$ are aromatic trees. Indeed, one can iteratively construct $\sigma$ on each connected component.
	\end{remark}
	
	\begin{proof}[of Theorem~\ref{thm:1}]
		Let $\tau_1,\tau_2$ such that $\lambda F(\tau_1)=F(\tau_2)$ with $\lambda\in\mathbb{R}$ in dimension $2$. 
		The same identity is also true in dimension $1$. Indeed, one can take in dimension $2$, for every $ v  \in V_1 $
		\begin{equs}
			\bar{f}^v = f^v(x) \vec{e}_x.
		\end{equs}
		which will give us the identity in dimension $1$. From Proposition \ref{prp:multi_trees},
		one has $\lambda G(m_1)=G(m_2)$ in dimension $1$, where $m_1, m_2$ are the multi-indices associated with $\tau_1,\tau_2$, thus $m_1=m_2$, and $\lambda=1$. 
		Hence, from Proposition \ref{prp:transitiveaction}, there exists $\sigma$ such that $\sigma.\tau_1=\tau_2$.
		Then
		\begin{equs}
			F(\sigma . \tau_1)  = F(\tau_2) = F(\tau_1).
		\end{equs} 
		Therefore from Proposition \ref{prp:geofixed}, one has
		\begin{equs}
			\sigma . \tau_1 = \tau_1
			\end{equs}
		which allows us to say that $\tau_1=\tau_2$ and this concludes the proof.	
	\end{proof}

	\begin{remark}
		Since Propositions~\ref{prp:geofixed} and~\ref{prp:transitiveaction} also hold for aromatic trees, Theorem~\ref{thm:1} holds for aromatic trees.
	\end{remark}
	
\section{Combinatorial description compatible with multi-indices}
	
	\label{Sec::4}
	
	In this section, we investigate  the existence of combinatorial sets compatible with the multi-indices, which is a weaker requirement than compatibility with the rooted trees. Our main negative result is the following
	
	\begin{theorem} \label{thm:2}
		In dimension $d\neq 1$, $\mathcal{W}_d$ does not admit any faithful combinatorial description compatible with $\mathrm{MI}$.
	\end{theorem}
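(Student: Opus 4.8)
The plan is to assume, for contradiction, that for some $d\geq 2$ there is a faithful combinatorial description $(\mathrm{CD}_d,H_d)$ of $\mathcal{W}_d$ compatible with $\mathrm{MI}$, witnessed by equivariant surjections $q_n\colon\mathrm{CD}_d(n)\to\mathrm{MI}(n)$ with $\pi_d\circ H_d^n=G\circ q_n$. Faithfulness makes $H_d^n$ an injection onto a basis, so linear extension gives an isomorphism of $\mathfrak{S}_n$-representations $\mathbb{R}[\mathrm{CD}_d(n)]\cong\mathcal{W}_d(n)$; in particular $\mathcal{W}_d(n)$ is a permutation representation, and equivariance of $q_n$ forces its point stabilisers into the stabilisers of multi-indices (which are Young subgroups). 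The structural observation I would record first is that $\mathcal{W}_d(n)$ carries a canonical $\mathfrak{S}_n$-equivariant grading by multi-index: since $F(\tau)$ differentiates the input at node $v$ exactly $\varphi(v)$ times, $F(\tau)$ is homogeneous of multidegree $\pi(\tau)$, so $\mathcal{W}_d(n)=\bigoplus_{m\in\mathrm{MI}(n)}\mathcal{W}_d(n)_m$, with $\pi_d$ carrying each $\mathcal{W}_d(n)_m$ onto the line $\mathbb{R}\,G(m)$. The linear multi-indices form one orbit $\mathrm{LMI}(n)\cong[n]$ (indexed by the arity-$0$ vertex), and the sum of their graded pieces is precisely $\mathcal{LW}_d(n)$.

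The computational heart is the $\mathfrak{S}_n$-structure of $\mathcal{LW}_d(n)$. Evaluating on affine fields $f^i=c_i+M_i x$ at the origin, where $c_i\in\mathbb{R}^d$ and $M_i\in\mathbb{R}^{d\times d}$ are free and independent, the differential of the chain $v_1\to\cdots\to v_n$ becomes the left-normed product $M_{v_1}\cdots M_{v_{n-1}}\,c_{v_n}$: a word of Jacobians of the $n-1$ lower vertices applied to the value at the top vertex $v_n$. Hence a relation $\sum\lambda\,F(\mathrm{chain})=0$ splits, according to which vertex is the undifferentiated top, into multilinear matrix identities of degree $n-1$ for $\mathbb{R}^{d\times d}$. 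By Amitsur--Levitzki such identities first appear at degree $2d$, where the space of multilinear identities is one-dimensional and spanned by the standard polynomial $\mathrm{St}_{2d}$. Thus the minimal left identity sits at $n=2d+1$ (Theorem~\ref{minimal_identity}), and for each choice of top vertex $i$ the relation is the $\mathrm{St}_{2d}$-antisymmetrisation of the remaining $2d$ vertices, living inside $\mathcal{W}_d(n)_{m_i}$ and transforming by the sign of $\mathrm{Stab}(m_i)=\mathfrak{S}_{n-1}$ (Proposition~\ref{correspondance_left}). Therefore the relation module is $R\cong\mathrm{Ind}_{\mathfrak{S}_{n-1}}^{\mathfrak{S}_n}\mathrm{sgn}\cong S^{(1^n)}\oplus S^{(2,1^{n-2})}$, and writing $\mathcal{LW}_d(n)$ as the regular representation modulo $R$ yields (Corollary~\ref{representation_d})
\[
\chi_{\mathcal{LW}_d(n)}(\sigma)=n!\,[\sigma=\mathrm{id}]-\mathrm{sgn}(\sigma)\,\#\mathrm{Fix}(\sigma),\qquad n=2d+1 .
\]
Equivalently, as an $\mathfrak{S}_{n-1}$-representation a single graded piece is $\mathcal{W}_d(n)_{m_i}\cong\mathbb{R}[\mathfrak{S}_{n-1}]\ominus\mathrm{sgn}$. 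On a $3$-cycle (available once $n-1\geq3$, i.e. $d\geq2$) this character equals $-1$, and $\chi_{\mathcal{LW}_d(n)}$ equals $-(2d-2)<0$; so neither $\mathcal{W}_d(n)_{m_i}$ nor $\mathcal{LW}_d(n)$ is a permutation representation. For $d=1$ the same formula is nonnegative, matching $\mathcal{LW}_1(n)\cong\mathbb{R}[\mathrm{LMI}(n)]$, which is why the obstruction is special to $d\neq1$.

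To conclude I would combine the two halves. Under $\mathbb{R}[\mathrm{CD}_d(n)]\cong\mathcal{W}_d(n)$ the fibre $q_n^{-1}(\mathrm{LMI}(n))$ gives a permutation submodule; the goal is to identify it with the graded summand $\mathcal{LW}_d(n)$, forcing $\mathcal{LW}_d(2d+1)$ to be a permutation representation and contradicting the negative character value. The main obstacle is exactly this identification. The axioms control only the dimension-$1$ shadow $\pi_d\circ H_d^n=G\circ q_n$, so a priori $H_d^n(a)$ with $q_n(a)$ linear may pick up components in non-linear graded pieces lying in $\ker\pi_d$, and the fibre decomposition of $\mathbb{R}[\mathrm{CD}_d(n)]$ need not align with the multidegree grading of $\mathcal{W}_d(n)$. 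I would attempt to resolve this by a leading-term argument with respect to the grading: the map sending $a$ to the multidegree-$q_n(a)$ component of $H_d^n(a)$ is $\mathfrak{S}_n$-equivariant and still projects onto $G(q_n(a))$ under $\pi_d$, and I must show it remains an isomorphism, so that each $\mathcal{W}_d(n)_m$ is itself the permutation module $\mathbb{R}[q_n^{-1}(m)]$ of its fibre. Granting this triangularity, the linear orbit gives $\mathcal{W}_d(n)_{m_n}\cong\mathbb{R}[q_n^{-1}(m_n)]$ as $\mathfrak{S}_{n-1}$-modules, a permutation module, contradicting $\mathcal{W}_d(n)_{m_n}\cong\mathbb{R}[\mathfrak{S}_{n-1}]\ominus\mathrm{sgn}$. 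Proving that the leading-term map stays an isomorphism is where the genuine difficulty lies; the Amitsur--Levitzki input and the character bookkeeping above are then the routine part.
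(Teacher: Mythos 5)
Your proposal follows the same skeleton as the paper's proof: use the compatibility surjections $q_n$ to localise to the linear part, identify the relations of $\mathcal{LW}_d(2d+1)$ with the minimal left identities of the Witt algebra $W_d$, and exhibit a negative character value that no module with a $\mathfrak{S}_n$-stable basis can have. Two differences are worth noting but are inessential. First, where the paper imports Dzhumadil'daev's result wholesale (Theorem~\ref{minimal_identity}), you rederive it by evaluating chains on affine fields, which turns left identities into multilinear matrix identities; this is legitimate and is indeed the mechanism underlying \cite{D00}, but note that Amitsur--Levitzki alone gives only the vanishing of $\mathrm{St}_{2d}$ and the absence of identities below degree $2d$ --- the statement that in degree exactly $2d$ the multilinear identities of $d\times d$ matrices form the \emph{one-dimensional} span of $\mathrm{St}_{2d}$ is a separate classical refinement that you would need to cite. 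Second, you evaluate the character on a $3$-cycle (value $-(2d-2)$) where the paper uses a $(2d-1)$-cycle (value $-2$); both are correct, and both correctly degenerate at $d=1$. Your identification of the relation module with $\mathrm{sgn}\otimes(\text{permutation representation})$ agrees with Remark~\ref{rmk:key}, and your explicit multidegree grading $\mathcal{W}_d(n)=\bigoplus_m\mathcal{W}_d(n)_m$ is a useful observation that the paper uses only implicitly.

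Where you diverge is the endgame, and there your attempt is, by your own admission, incomplete. The paper disposes of this step in one line: it sets $\mathrm{LCD}_d(n)=q_n^{-1}(\mathrm{LMI}(n))$ and asserts that faithfulness makes $H_d^n(\mathrm{LCD}_d(n))$ a stable basis of $\mathcal{LW}_d(n)$. There is no leading-term bootstrap and no claim that each graded piece $\mathcal{W}_d(n)_m$ is the permutation module on its fibre: the paper reads compatibility with $\mathrm{MI}$ as aligning the basis with the multidegree decomposition, i.e.\ $H_d^n(a)$ lies in the graded piece indexed by $q_n(a)$, under which reading your difficulty evaporates (the stable basis partitions along the grading, the linear fibre lands in $\mathcal{LW}_d(n)$, and dimension counting forces it to be a basis there). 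You have correctly spotted that the literal axiom $\pi_d\circ H_d^n=G\circ q_n$ controls only the one-dimensional shadow, so that $H_d^n(a)$ over the linear fibre could in principle carry components in non-linear graded pieces inside $\ker\pi_d$; but your proposed repair does not close this. The map sending $a$ to the multidegree-$q_n(a)$ component of $H_d^n(a)$ has no reason to remain injective or surjective: the axioms provide no partial order on $\mathrm{MI}(n)$ with respect to which $H_d^n$ is triangular, and equivariance alone does not make a graded-component map invertible. So either adopt the paper's (graded) reading of Definition~\ref{Def_combinatorial}, in which case your last paragraph is unnecessary and your argument coincides with the paper's, or accept that as written your attempt stops one genuine step short of a proof --- a step your ``triangularity'' heuristic cannot supply from the stated hypotheses.
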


	The proof of the previous theorem is based on the framework of \cite[Thm 3.3]{D00}, which implies our statement. We start by recalling the definition of the Witt algebras.
	
	\begin{definition} \label{def_Witt}
		The \emph{Witt algebra of rank $d$} is the algebra $(W_d,\trigl)$ with $W_d$ the vector space of polynomial maps $\mathbb{R}^d\to\mathbb{R}^d$, and $\trigl$ defined by:
		$$f\trigl g=\sum_{i=1}^d g_i\partial_i f$$
		with $f_i$ the $i$-component of $f$, and $\partial_i=\frac{\partial}{\partial x_i}$. 
	\end{definition}
		
	This algebra is a pre-Lie algebra (also known as right-symmetric algebras), meaning that it satisfies:
	$$f\trigl (g\trigl h) - (f\trigl g)\trigl h = f\trigl (h\trigl g) - (f\trigl h)\trigl g$$
	for any $f,g,h\in W_d$. In particular, $W_d$ is not an associative algebra.
	The above relation is an ``identity'' of $W_d$. Formally, an \emph{identity} of $W_d$ is a linear combination of formal compositions of $\trigl$ such that the induced multilinear map $W_d^{\otimes n}\to W_d$ is $0$. 
	A \emph{left identity} is an identity where all the formal compositions are done in the right input of $\trigl$. For example, $f\trigl (g\trigl h)-g\trigl (f\trigl h)$ is an identity of $W_1$, moreover, this is a left identity. 
	We should notice that this is not an identity of $W_d$ for $d\neq 1$. We recall \cite[Thm 3.3]{D00} in the next Theorem below:

	\begin{theorem} \label{minimal_identity}
		Let $\mathrm{s}_{2d}$ be the following left identity:
		$$\mathrm{s_{2d}}:=\sum_{\sigma\in\mathfrak{S}_{2d}}\varepsilon(\sigma) f_{\sigma(1)}\trigl(\dots\trigl(f_{\sigma(2d)}\trigl f_{2d+1})\dots)$$
		The vector space spanned by $\mathrm{s}_{2d}$ and its permutations are the only left identities of $W_d$ in arity $2d+1$. 
		Moreover, $W_d$ does not satisfy any left identities in arity $2d$ or less.
	\end{theorem}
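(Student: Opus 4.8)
The plan is to reduce the whole statement to the Amitsur--Levitzki theorem for the matrix algebra $M_d(\mathbb{R})$, exploiting the fact that the right multiplication of the Witt algebra is nothing but multiplication by a Jacobian matrix. Recall first that, by the recursion for $F$, the elementary differential of a linear tree is precisely a left-normed product in $(W_d,\trigl)$, so identities of $W_d$ are exactly the relations among the $F(\tau)$ for $\tau$ linear. Writing $\mathrm{D}f$ for the Jacobian matrix $(\partial_i f_j)_{j,i}$ of $f\in W_d$, one checks directly from the definition that
\[ f\trigl g = (\mathrm{D}f)\,g, \]
the pointwise product of the matrix $\mathrm{D}f(x)$ with the vector $g(x)$. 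Iterating, a left-normed product unfolds into a product of Jacobians acting on the innermost entry:
\[ f_1\trigl\bigl(f_2\trigl(\cdots\trigl(f_m\trigl f_{m+1})\cdots)\bigr) = (\mathrm{D}f_1)(\mathrm{D}f_2)\cdots(\mathrm{D}f_m)\,f_{m+1}. \]
In particular $f_{m+1}$ is the unique argument that is never differentiated; I will call it the \emph{base} of the monomial and note that it is an invariant of the monomial.

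With this dictionary the first claim is immediate. For each fixed $x\in\mathbb{R}^d$ one has
\[ \mathrm{s}_{2d}(f_1,\dots,f_{2d+1})(x) = s_{2d}\bigl(\mathrm{D}f_1(x),\dots,\mathrm{D}f_{2d}(x)\bigr)\,f_{2d+1}(x), \]
where $s_{2d}$ is the standard (fully antisymmetrised) noncommutative polynomial of degree $2d$. Since $\mathrm{D}f_1(x),\dots,\mathrm{D}f_{2d}(x)$ are $2d$ genuine elements of $M_d(\mathbb{R})$, the Amitsur--Levitzki theorem forces this standard polynomial of them to vanish, so $\mathrm{s}_{2d}\equiv 0$ and it is a left identity.

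For the converse — no left identity in arity $\le 2d$, and those in arity $2d+1$ exactly the $\mathfrak{S}_{2d+1}$-orbit of $\mathrm{s}_{2d}$ — the plan is to turn a putative left identity into an honest polynomial identity of $M_d(\mathbb{R})$. First I would disentangle the bases: substituting a constant vector field for a chosen argument $f_b$ makes $\mathrm{D}f_b=0$, annihilating every monomial in which $b$ lies on the chain while preserving exactly those in which $b$ is the base. Hence the space of left identities splits as a direct sum over the base variable, all summands being equivalent under the $\mathfrak{S}_n$-action. Inside the fixed-base-$f_n$ summand, evaluating the remaining arguments on linear fields $f_i(x)=A_ix$ turns the Jacobians into arbitrary constant matrices $A_i\in M_d(\mathbb{R})$, while letting $f_n$ run over all constant fields shows that such a left identity of arity $n$ is equivalent to a multilinear identity of $M_d(\mathbb{R})$ of degree $n-1$. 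The minimal-degree part of Amitsur--Levitzki then finishes the argument: $M_d(\mathbb{R})$ has no multilinear identity of degree $<2d$, ruling out all arities $\le 2d$; and in degree exactly $2d$ the space of multilinear identities is one-dimensional, spanned by $s_{2d}$, which pulls back to $\mathrm{s}_{2d}$ with base $f_{2d+1}$. Summing over the admissible bases recovers precisely the orbit of $\mathrm{s}_{2d}$.

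The hard part is not the dictionary, which is elementary, but the two nontrivial matrix-theoretic inputs: the Amitsur--Levitzki theorem itself and, for the uniqueness clause, the refinement that the minimal-degree multilinear identities of $M_d(\mathbb{R})$ form a one-dimensional space spanned by $s_{2d}$. One must also make sure the reduction is faithful, i.e.\ that no nonzero combination of left-normed monomials vanishes on $W_d$ for spurious reasons; this is exactly what the constant-field substitution (separating the bases) and the linear-field substitution (realising all matrix tuples) guarantee, the remainder being bookkeeping with the $\mathfrak{S}_n$-equivariance.
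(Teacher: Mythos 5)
The paper contains no proof of this statement: it is recalled verbatim from \cite[Thm 3.3]{D00}, so your proposal is being compared against a citation rather than an internal argument. That said, your proof is correct, and it is essentially the classical reduction underlying the cited result. The dictionary is right: since $f\trigl g=\sum_i g_i\partial_i f=(\mathrm{D}f)\,g$ differentiates only the \emph{left} factor, a left-normed monomial unfolds pointwise into $\mathrm{D}f_{w(1)}\cdots\mathrm{D}f_{w(n-1)}\,f_{w(n)}$, so $\mathrm{s}_{2d}$ evaluates to $s_{2d}(\mathrm{D}f_1(x),\dots,\mathrm{D}f_{2d}(x))\,f_{2d+1}(x)$ and Amitsur--Levitzki gives that it is an identity. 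Your two substitutions make the converse reduction faithful: a constant field in slot $b$ kills every monomial whose chain contains $b$, and since constant vectors span $\mathbb{R}^d$ at each point, this forces each base-homogeneous component to be an identity on its own, so the space of left identities splits as a direct sum over the base; linear fields $f_i(x)=A_ix$ then realise arbitrary tuples of matrices, identifying the base-$b$ summand in arity $n$ with the space of multilinear identities of $M_d(\mathbb{R})$ of degree $n-1$. The two matrix-theoretic inputs you isolate are exactly what is needed, and both are classical in characteristic $0$: $M_d$ has no nonzero multilinear identity of degree $<2d$, and its degree-$2d$ multilinear identities form the one-dimensional span of $s_{2d}$ (both halves are in Amitsur and Levitzki's original paper on minimal identities). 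Summing over the $2d+1$ admissible bases yields a $(2d+1)$-dimensional space that is exactly the span of the $\mathfrak{S}_{2d+1}$-orbit of $\mathrm{s}_{2d}$, consistent with Remark~\ref{rmk:key} (signature times permutation representation). One small slip worth flagging: your opening sentence asserts that identities of $W_d$ are exactly the relations among the $F(\tau)$ for $\tau$ linear; this is true for \emph{left} identities (this is Proposition~\ref{correspondance_left}), whereas general identities correspond to relations among elementary differentials of all rooted trees. Nothing downstream uses the stronger claim, so the argument stands.
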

	
	\begin{remark}\label{rmk:key}
		A direct consequence of this theorem is that the representation of $\mathfrak{S}_{2d+1}$ of left identities of $W_d$ of arity $2d+1$ is the product of the signature representation with the natural permutation representation.
	\end{remark}

	To link this theorem with our questions, we need the following results:
	
	\begin{proposition} \label{correspondance_left}
		One has a one-to-one correspondence between the left identities of $W_d$ and the linear relation satisfied by the elementary differentials arising from linear trees.
	\end{proposition}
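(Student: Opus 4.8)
The plan is to exhibit an explicit dictionary between the two objects and check that it is a bijection preserving the vanishing property. A left identity of $W_d$ in arity $n$ is, by definition, a linear combination
\begin{equs}
	\sum_i \lambda_i \, f_{\sigma_i(1)} \trigl (\cdots \trigl (f_{\sigma_i(n-1)} \trigl f_n)\cdots)
\end{equs}
whose induced multilinear map $W_d^{\otimes n} \to W_d$ is zero. On the other side, a linear relation among elementary differentials arising from linear trees is a combination $\sum_i \lambda_i F(\tau_i)$ with each $\tau_i$ a \emph{linear} rooted tree on $[n]$, such that the map vanishes. The first step is to note that a linear tree on $n$ nodes is nothing but a totally ordered chain of its vertices, i.e.\ it is determined by a bijection assigning the labels $1,\dots,n$ to the positions root, second-from-root, \ldots, top. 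First I would make this precise: the linear tree with vertex $w_1$ as root, $w_2$ its unique child, and so on up to the leaf $w_{n}$, is sent under $F$ to the $n$-linear map
\begin{equs}
	F(\tau)(f^{w_1},\dots,f^{w_n}) = f^{w_n} \trigl (\cdots \trigl (f^{w_2} \trigl f^{w_1})\cdots),
\end{equs}
which I would verify by unwinding Definition~\ref{elementary_diff}: at each node with a single child, the sum over $j_1$ collapses to the single directional-derivative term $\sum_{j} F(\tau_{v})(\cdots)_j \partial_j f^{r}$, which is exactly the pre-Lie product $\trigl$ of Definition~\ref{def_Witt} with the subtree's differential in the right slot. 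This identifies the root of the linear tree with the rightmost (innermost) entry $f_n$ of the left-nested product, and the leaf with the leftmost entry.

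\textbf{The bijection and its equivariance.}
With the above reading, a linear tree on $[n]$ corresponds precisely to a choice of how to place the labels $1,\dots,n$ into the slots of a single fixed left-nested monomial $f_{?}\trigl(\cdots\trigl(f_{?}\trigl f_{?})\cdots)$, i.e.\ to an element of $\mathfrak{S}_n$. Thus both the set of linear rooted trees $\mathrm{LMI}$-type chains on $[n]$ and the set of left-monomials spanning the space of formal left compositions are naturally indexed by $\mathfrak{S}_n$, and the map $F$ on linear trees coincides, monomial by monomial, with the evaluation map sending a formal left composition to its induced multilinear map on $W_d$. Next I would check that this correspondence is $\mathfrak{S}_n$-equivariant for the relevant actions (permutation of the node labels on the tree side, permutation of the inputs $f_1,\dots,f_n$ on the identity side); this is immediate since relabelling a node is the same as permuting which $f$ sits in the corresponding slot.

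\textbf{Transporting the vanishing condition.}
The crux is that a linear combination $\sum_i \lambda_i F(\tau_i)$ vanishes as a map $\mathcal{C}_d^{\otimes n}\to\mathcal{C}_d$ \emph{if and only if} the corresponding combination of left-monomials is an identity of $W_d$. One direction is trivial: evaluating the elementary differentials on polynomial vector fields $f^i \in W_d \subset \mathcal{C}_d$ is exactly computing the induced multilinear map of the formal left composition, so a left identity of $W_d$ yields a vanishing relation among the $F(\tau_i)$ when tested against polynomials, and since polynomials are dense (or by directly noting that elementary differentials are polynomial-coefficient differential operators, so vanishing on all smooth inputs is equivalent to vanishing on all polynomial inputs), this gives a genuine relation in $\mathcal{W}_d$. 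For the converse, a vanishing relation in $\mathcal{W}_d(n)$ restricted to linear trees tested against arbitrary smooth $f^i$ in particular holds for $f^i \in W_d$, producing the identity of $W_d$. The main obstacle I anticipate is precisely this equivalence between ``vanishing on all smooth inputs'' and ``vanishing as an algebraic identity of the Witt algebra on polynomial inputs'': one must argue that no nontrivial relation among linear elementary differentials can hold on smooth fields while failing on polynomial fields. This follows because $W_d$ generates enough directional derivatives — the monomials $x^\alpha \vec e_j$ span a set on which these constant-coefficient-in-disguise operators separate, so testing on $W_d$ is faithful — but making the density/faithfulness argument airtight (so that the correspondence is a genuine bijection of relation-spaces, not merely a surjection) is where the real care is needed.
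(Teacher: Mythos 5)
Your proof is correct in substance and follows essentially the same route as the paper: identify the elementary differentials of linear trees with the right-nested $\trigl$-monomials, and transfer identities between $W_d$ and $\mathcal{C}_d$ by polynomial density; the only structural difference is that the paper establishes the identification via the grafting morphism property \eqref{morphism_property} and the decomposition of a linear tree as an iterated grafting of single nodes, whereas you unwind the recursion of Definition~\ref{elementary_diff} directly, which is a cosmetic difference. One concrete slip, though: with the convention $f\trigl g=\sum_{i=1}^d g_i\partial_i f$ of Definition~\ref{def_Witt}, the unwinding you describe (subtree's differential in the \emph{right} slot, i.e.\ $F(B^r_+(\tau_v))=f^r\trigl F(\tau_v)$) yields $F(\tau)(f^{w_1},\dots,f^{w_n})=f^{w_1}\trigl\bigl(f^{w_2}\trigl(\cdots(f^{w_{n-1}}\trigl f^{w_n})\cdots)\bigr)$, so the root $w_1$ is the outermost left entry and the leaf $w_n$ is the innermost one (in particular, in $\mathrm{s}_{2d}$ of Theorem~\ref{minimal_identity} the unpermuted $f_{2d+1}$ corresponds to the leaf, not the root); your displayed formula and your sentence identifying the root with the innermost entry assert the reverse, contradicting your own correct description of the recursion. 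Also, in your opening display the innermost entry of a general left monomial need not be $f_n$: left monomials in arity $n$ are indexed by all of $\mathfrak{S}_n$, as you correctly use later. Neither point is fatal — reversing a chain is an $\mathfrak{S}_n$-equivariant involution of linear trees, so the one-to-one correspondence survives — but the explicit dictionary should be corrected accordingly.
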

	
	\begin{proof}
		The product $\trigl$ can easily be defined on $\mathcal{C}_d$, moreover, by density of the polynomial, the identities in $W_d$ are the same as the identities in $\mathcal{C}_d$.
		To conclude, one needs to check that any elementary differential arising from a linear tree is in bijection with the chains of right compositions of $\trigl$.
		For that, we need to recall the grafting product $\curvearrowright$ defined on rooted trees by
		\begin{equs}
			\sigma \curvearrowright \tau = \sum_{v \in V} \sigma \curvearrowright_v \tau
		\end{equs}
		where $ V $ are the nodes of $\tau$ and $ \sigma \curvearrowright_v \tau $ creates a new rooted tree by connecting the root of $ \sigma $ with the node $v$ of $\tau$. We denote by $V'$ the nodes of $\sigma$.
		One has 
		\begin{equs} \label{morphism_property}
			F(\sigma  \curvearrowright \tau)((f^w)_{w \in V'\cup V}) = 	F(\tau )((f^v)_{v \in V}) \trigl
			F(\sigma)( (f^u)_{u \in V'}).
		\end{equs}
		Any linear tree $ \tau $ can be decomposed into
		\begin{equs}
			\tau = ((\bullet_{v_1} \curvearrowright \bullet_{v_2}) \curvearrowright  \cdots) \curvearrowright \bullet_{r}
		\end{equs}
			where the $v_i$ are the nodes of $\tau$ different from the root $r$. Applying the elementary differential to the previous expression and using the morphism property \eqref{morphism_property}, one is able to conclude.
	\end{proof}
	
	\begin{remark}
		In fact, as a consequence of the main result of \cite{E94}, now well known due to \cite{CL01}, which states that the labelled rooted trees endowed the grafting product $\curvearrowright$ are the free pre-Lie algebra, together with \eqref{morphism_property}, any elementary differential can be obtained as a linear combination of compositions of $\trigl$.
	\end{remark}
	
	It only remains to state and prove the corollary of \cite[Thm 3.3]{D00} allowing us to get the second negative result we stated.
	
	\begin{corollary} \label{representation_d}
		Let us recall that $\mathcal{LW}$ denotes the space of elementary differentials arising from linear trees.
		For $d\neq 1$, the representation $\mathcal{LW}_d(2d+1)$ does not admit a basis stable by action of $\mathfrak{S}_{2d+1}$.
	\end{corollary}
	
	\begin{proof}
		To do so, let us compute $\rchi$ the character of $\mathcal{LW}_d(2d+1)$. 
		We recall that for $\rho:G\to\mathrm{Gl}_n(V)$ a representation of $G$, its character is $\rchi_\rho:g\mapsto \mathrm{tr}(\rho(g))$ from $G$ to $\mathbb{R}$. 
		Moreover, we recall that $\rchi_\rho(hgh^{-1})=\rchi_\rho(g)$, and that if $V$ admits a basis $B$ stable by the action of $G$ given by $\rho$, then $\rchi_\rho(g)$ is the number of elements of $B$ fixed by $g$.

		We know that $\mathcal{LW}_d(2d+1)$ is the quotient of the regular representation of $\mathfrak{S}_{2d+1}$ by the representation given by the left identities of $W_d$ of arity $2d+1$. 
		Thus by Remark~\ref{rmk:key}, we can compute $\rchi$.
		Let $\sigma=(1\;\dots\;2d-1)$ a $(2d-1)$-cycle, then $\rchi(\sigma)=0-\varepsilon(\sigma)\times 2=-2$.

		So if $\mathcal{LW}_d(2d+1)$ had a basis stable by the action of $\mathfrak{S}_{2d+1}$, then $\sigma$ would fixes $-2$ elements of this basis, which is absurd.
	\end{proof}

	\begin{proof}[of Theorem~\ref{thm:2}]
		A faithful combinatorial description of $\mathcal{W}_d$ is exactly a sequence of basis of $\mathcal{W}_d(n)$ stable by the action of $\mathfrak{S}_n$. 
		Let $\mathrm{CD}_d$ a combinatorial description of $\mathcal{W}_d$ compatible with the multi-indices via the sequence of surjections $q_n : \mathrm{CD}_d(n) \rightarrow \mathrm{MI}(n) $.
		 Let $\mathrm{LCD}_d(n)=q_n^{-1}(\mathrm{LMI}(n))$ the preimage of the linear multi-indices. Then if $\mathrm{CD}_d$ is faithful, $\mathrm{LCD}_d(n)$ is a basis of $\mathcal{LW}_d(n)$ stable by the action of $\mathfrak{S}_n$.
		By the previous corollary, this is not possible if $d\neq 1$ since $\mathcal{LW}_d(2d+1)$ does not admit such a basis.
	\end{proof}

	In dimension $d=2$ it is possible to strengthen this result:
	
	\begin{theorem}\label{thm:3}
		There is no faithful combinatorial description of $\mathcal{W}_2$.
	\end{theorem}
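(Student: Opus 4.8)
The plan is to exhibit a single arity $n$ for which the $\mathfrak{S}_n$-representation $\mathcal{W}_2(n)$ does not admit a basis stable under the action of $\mathfrak{S}_n$. As observed in the proof of Theorem~\ref{thm:2}, a faithful combinatorial description of $\mathcal{W}_2$ is precisely a sequence of such stable bases, one for each arity; so failure at a single $n$ already rules out any faithful combinatorial description. The obstruction is the one used in Corollary~\ref{representation_d}: if $\mathcal{W}_2(n)$ had a stable basis, then for every $\sigma\in\mathfrak{S}_n$ the character value $\rchi_{\mathcal{W}_2(n)}(\sigma)$ would count the basis elements fixed by $\sigma$, hence be a non-negative integer. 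It therefore suffices to produce one $\sigma$ with $\rchi_{\mathcal{W}_2(n)}(\sigma)<0$.

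To compute this character, I would use that $F$ extends to a surjective $\mathfrak{S}_n$-equivariant map $\mathbb{R}[\mathrm{RT}(n)]\to\mathcal{W}_2(n)$, with $\mathfrak{S}_n$ acting on the source by permuting vertices. In characteristic $0$ this gives
\begin{equs}
	\rchi_{\mathcal{W}_2(n)} = \rchi_{\mathbb{R}[\mathrm{RT}(n)]} - \rchi_{\ker F},
\end{equs}
where the first term is the permutation character sending $\sigma$ to the number of rooted trees in $\mathrm{RT}(n)$ fixed by $\sigma$, which is elementary to evaluate on each cycle type. Everything thus reduces to the space of linear relations $\ker F$ among the elementary differentials $F(\tau)$ in dimension $2$ at arity $n$.

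This is the crux, and the reason the argument is computer-assisted. Unlike the linear subspace $\mathcal{LW}_d(2d+1)$, whose relations are completely described by the minimal left identities of $W_d$ in Theorem~\ref{minimal_identity} (so that its character is read off via Remark~\ref{rmk:key}), the full module $\mathcal{W}_2(n)$ mixes relations coming from all tree shapes, for which no closed form is known. I would therefore represent each $F(\tau)$ symbolically as a polynomial in the partial derivatives of the inputs $f^1,\dots,f^n\in\mathcal{C}_2$, so that two elementary differentials coincide exactly when these expressions agree; the matrix whose rows list the coefficients of the $F(\tau)$ then encodes the rank of $F$, and, after fixing a basis of its row space, the matrix of any $\sigma$ acting by permuting the vertices of the trees. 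Its trace is $\rchi_{\mathcal{W}_2(n)}(\sigma)$. The genuine obstacle is keeping this enumeration feasible: both $\mathrm{RT}(n)$ and the relation space grow fast, so one must take $n$ large enough that a negative value surfaces---one expects the arity $2d+1=5$, where the signed relations of Theorem~\ref{minimal_identity} first enter, to already suffice---while retaining a tractable computation.

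Finally, with the character tabulated, I would isolate a permutation $\sigma$ (of a suitable cycle type, the natural candidate being the $(2d-1)$-cycle of Corollary~\ref{representation_d}, now viewed inside $\mathfrak{S}_5$) for which the computed value $\rchi_{\mathcal{W}_2(n)}(\sigma)$ is strictly negative. A stable basis would force this value to be a non-negative integer, so none exists for $\mathcal{W}_2(n)$, and hence $\mathcal{W}_2$ admits no faithful combinatorial description. Since this step relies on an explicit finite computation specific to $d=2$ rather than on a structural argument, it does not extend to general $d$, matching the expectation that the analogous statement for other $d\neq 1$ remains open.
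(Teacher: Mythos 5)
Your setup is sound up to the last step: a faithful combinatorial description is indeed exactly a sequence of $\mathfrak{S}_n$-stable bases, failure at a single arity suffices, and the character of $\mathcal{W}_2(5)$ is accessible by a finite computation of $\ker F$ (this is Proposition~\ref{prp:comp}: $\dim\mathcal{W}_2(5)=620$ while $|\mathrm{RT}(5)|=625$, so $\ker F$ is exactly the $5$-dimensional span of the permutations of $\mathrm{s}_4$, whose character is known from Remark~\ref{rmk:key}). The gap is that the obstruction you plan to exhibit --- a permutation with strictly negative character value --- does not exist at arity $5$. Carrying out precisely the computation you describe gives $\rchi_2=\rchi_\infty-\rchi_V$ with values $620,\,30,\,2,\,2,\,2,\,0,\,0$ on the classes of $\mathrm{id}$, $(1\,2)$, $(1\,2\,3)$, $(1\,2)(3\,4)$, $(1\,2\,3\,4)$, $(1\,2)(3\,4\,5)$, $(1\,2\,3\,4\,5)$: all non-negative. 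The negativity trick of Corollary~\ref{representation_d} lives on the subspace $\mathcal{LW}_2(5)$, a quotient of the regular representation, whose character vanishes off the identity so that subtracting the left-identity character $\varepsilon(\sigma)\cdot 2$ at a $3$-cycle yields $-2$; in Theorem~\ref{thm:2} one may restrict to this linear part only because MI-compatibility forces the preimage of the linear multi-indices to be a stable basis of $\mathcal{LW}_2(5)$. Theorem~\ref{thm:3} drops that hypothesis, a putative stable basis of $\mathcal{W}_2(5)$ need not restrict to one of $\mathcal{LW}_2(5)$, and on the full space your proposed criterion is silent: non-negativity of all character values is necessary but not sufficient for a stable basis.

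The missing idea is the finer orbit-counting argument that the paper uses exactly because $\rchi_2\geq 0$ everywhere. A stable basis $B$ decomposes into $\mathfrak{S}_5$-orbits, each a transitive set $\mathfrak{S}_5/G$ whose permutation character is bounded above by $\rchi_2$ pointwise, hence vanishes on the classes of $(1\,2)(3\,4\,5)$ and $(1\,2\,3\,4\,5)$; since $\rchi_2((1\,2\,3\,4))=2>0$, some orbit $X$ contains a point fixed by $(1\,2\,3\,4)$, and $(1\,2\,3\,4)^2$ being conjugate to $(1\,2)(3\,4)$ forces $\rchi_X((1\,2)(3\,4))\geq\rchi_X((1\,2\,3\,4))\geq 1$. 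Among the $19$ conjugacy classes of subgroups of $\mathfrak{S}_5$ only $G=\mathfrak{S}_4$ meets these constraints, and since each orbit $\cong\mathfrak{S}_5/\mathfrak{S}_4$ contains a single point fixed by a $4$-cycle, the two basis elements fixed by $(1\,2\,3\,4)$ lie in two distinct such orbits, giving $4=2\times\rchi_{\mathfrak{S}_4}((1\,2\,3))\leq\rchi_2((1\,2\,3))=2$, a contradiction. If you instead tried to rescue your plan at some arity $n>5$, you would need $\ker F$ there, for which no description is available (Proposition~\ref{prp:comp} is conjectural beyond this case); so the Burnside-type refinement at $n=5$ is not an optional strengthening but the essential step your proposal lacks.
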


	To show this result, we need to check the following fact:

	\begin{proposition}\label{prp:comp}
		In dimension $2$, the only identities of arity $5$ are the linear combinations of permutations of the identity $\mathrm{s}_4$ of \cite[Thm 3.3]{D00}.
	\end{proposition}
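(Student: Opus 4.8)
The plan is to reinterpret the arity-$5$ identities as the kernel of an evaluation map on labelled rooted trees, and then to settle the statement by pinning down a single dimension. Recall from \cite{E94,CL01} that the labelled rooted trees equipped with the grafting product $\curvearrowright$ form the free pre-Lie algebra; hence the multilinear arity-$5$ component of the free pre-Lie algebra is the $5^4=625$-dimensional space $\mathbb{R}[\mathrm{RT}(5)]$, and by the morphism property \eqref{morphism_property} every formal composition of $\trigl$ on five inputs is a linear combination of the $F(\tau)$ with $\tau\in\mathrm{RT}(5)$. Together with the observation (made in the proof of Proposition~\ref{correspondance_left}) that, by density of polynomials, identities of $W_2$ coincide with identities over $\mathcal{C}_2$, this shows that an identity of arity $5$ is exactly an element of $\ker\big(F\colon\mathbb{R}[\mathrm{RT}(5)]\to\mathcal{W}_2(5)\big)$, so that $\dim\ker F = 625-\dim\mathcal{W}_2(5)$.

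On one side, the permutations of $\mathrm{s}_4$ span the space of \emph{left} identities of $W_2$ in arity $5$, which by Theorem~\ref{minimal_identity} and Remark~\ref{rmk:key} is the tensor product of the signature with the natural ($5$-dimensional) permutation representation of $\mathfrak{S}_5$, hence of dimension $5$. As these are genuine identities, $\dim\ker F\ge 5$. Thus the whole statement reduces to the single numerical equality $\dim\mathcal{W}_2(5)=620$: once this holds one gets $\dim\ker F=5$, so that every arity-$5$ identity is a left identity and therefore a linear combination of permutations of $\mathrm{s}_4$.

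To establish $\dim\mathcal{W}_2(5)=620$ I would argue by direct computation. First enumerate the $625$ rooted trees on the vertex set $[5]$; for each $\tau$, build $F(\tau)$ as an explicit multilinear differential operator following Definition~\ref{elementary_diff}. Then evaluate the whole family on a fixed finite collection of test tuples $(f^1,\dots,f^5)$ of polynomial vector fields on $\mathbb{R}^2$, record the coefficients of the resulting vector fields, and assemble them into a matrix whose rank is $\dim\mathcal{W}_2(5)$ as soon as the test tuples separate the differentials. Because $\dim\ker F\ge 5$ already forces $\mathrm{rank}\le 620$, it suffices to produce test tuples achieving $\mathrm{rank}\ge 620$; monomial vector fields of the form $x^ay^b\,\vec{e}_x$ and $x^ay^b\,\vec{e}_y$ should provide enough resolution. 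To keep the argument rigorous one runs the linear algebra in exact arithmetic over $\mathbb{Q}$, or certifies the lower bound by reduction modulo a prime $p$, since $\mathrm{rank}_{\mathbb{Q}}\ge\mathrm{rank}_{\mathbb{F}_p}$.

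The conceptual reduction above is immediate; the real work, and the step I expect to be the main obstacle, is the certified computation itself. One must organise the $625$ elementary differentials efficiently (exploiting $\mathfrak{S}_5$-equivariance to shrink the matrix), choose test vector fields rich enough to realise the full rank, and carry out an exact rank computation of this size without floating-point error. This is precisely the step flagged in the introduction as not extending to general $d$, since $|\mathrm{RT}(2d+1)|=(2d+1)^{2d}$ grows too fast and the symbolic differentials rapidly become unmanageable.
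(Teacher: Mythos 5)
Your proposal is correct and takes essentially the same route as the paper: both arguments reduce the statement to the dimension count $\dim\mathcal{W}_2(5)=620$ against the $625=|\mathrm{RT}(5)|$ labelled rooted trees and the $5$-dimensional span of the permutations of $\mathrm{s}_4$, with that dimension established by a brute-force computer computation. Your write-up simply makes explicit two points the paper leaves implicit, namely the identification of arity-$5$ identities with the kernel of $F$ on $\mathbb{R}[\mathrm{RT}(5)]$ via the free pre-Lie algebra structure, and the exact-arithmetic certification of the rank.
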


	\begin{proof}
		We conjecture this fact to be true in any dimension; the current proof of this result in dimension $2$ is done via brute force with a computer by computing the dimension of $\mathcal{W}_2(5)$ which is $620$, while the cardinality of $\mathrm{RT}(5)$ is $625$. Since $\mathrm{s}_4$ spans a representation of dimension $5$, those are the only identities in arity $5$ and dimension $2$.
	\end{proof}

	\begin{proof}[of Theorem~\ref{thm:3}]
		Let us compute the character $\rchi_2$ of the representation $\mathcal{W}_2(5)$.
		Let $V$ be the representation spanned by $\mathrm{s}_4$, we denote $\rchi_V$ its character, and let $\mathcal{W}_\infty(5)$ the representation spanned by $\mathrm{RT}(5)$, we denote $\rchi_\infty$ its character.
		By the previous proposition, we have that $\rchi_2=\rchi_\infty-\rchi_V$; moreover, by Remark~\ref{rmk:key}, we know the representation $V$. 

		To compute the character of a representation $R$, we recall that if $R$ admits a basis $B$ stable by the action of $G$, then $\rchi_R(g)$ is the number of elements of $B$ fixed by $g$.
		Thus we get the following table:
		\begin{equs}
			\begin{array}{r|c|c|c|c|c|c|c}
				& \mathrm{id} & (1\;2) & (1\;2\;3) & (1\;2)(3\;4) & (1\;2\;3\;4) & (1\;2)(3\;4\;5) & (1\;2\;3\;4\;5)\\
				\hline
				\rchi_\infty & 625 & 27 & 4 & 3 & 1 & 0 & 0 \\
				\hline
				\rchi_V & 5 & -3 & 2 & 1 & -1 & 0 & 0 \\
				\hline
				\rchi_2 & 620 & 30 & 2 & 2 & 2 & 0 & 0
			\end{array}
		\end{equs}
		Assume that $\mathcal{W}_2(5)$ admits a basis $B$ stable by the action of $\mathfrak{S}_5$, let $X$ be the orbit of one of the elements of $B$ fixed by $(1\;2\;3\;4)$. 
		Let us find $\rchi_X$. We know that $\rchi_X(\sigma)\leq\rchi_2(\sigma)$ for any $\sigma\in\mathfrak{S}_5$. 
		Moreover, since $(1\;2\;3\;4)^2=(1\;3)(2\;4)$ which is conjugated to $(1\;2)(3\;4)$, we have $\rchi_X((1\;2)(3\;4))=\rchi_X((1\;2\;3\;4))\geq 1$.
		Finally, since the action on $X$ is transitive, $X=\mathfrak{S}_5/G$ with $G$ a subgroup of $\mathfrak{S}_5$. 
		Let us sum up those constraints in this table, with $a\geq 1$, and the symbol $*$ when we do not impose any constraint.
		\begin{equs}
                        \begin{array}{r|c|c|c|c|c|c|c}
                                & \mathrm{id} & (1\;2) & (1\;2\;3) & (1\;2)(3\;4) & (1\;2\;3\;4) & (1\;2)(3\;4\;5) & (1\;2\;3\;4\;5)\\
                                \hline
                                \rchi_X & * & * & * & a & a & 0 & 0 \\
                                \hline
				\rchi_{\mathfrak{S}_4} & 5 & 3 & 2 & 1 & 1 & 0 & 0 \\
                        \end{array}
                \end{equs}
		Among the $19$ conjugacy classes of subgroups of $\mathfrak{S}_5$, only one, $\mathfrak{S}_4$ with the natural inclusion, satisfies the constraints given in the above table. However, since $\rchi_{\mathfrak{S}_4}((1\;2\;3))=2$, we get 
		$$4=2\times\rchi_{\mathfrak{S}_4}((1\;2\;3))\leq\rchi_2((1\;2\;3))=2$$
		which is a contradiction.
	\end{proof}

\section{Labelling combinatorial objects}
\label{Sec::5}

	The result obtained so far may seem unsatisfactory since in the definition of a combinatorial description, we allow the evaluation of elementary differentials on an arbitrary number of distinct functions, while in the case of B-series, the elementary differentials are only evaluated on copies of the same function. Even when using splitting methods, the number of distinct functions on which the elementary differentials are evaluated is bounded and usually quite small. Let us bound the number of distinct functions on which we evaluate the elementary differential. To do so, let us label our combinatorial object.

	Let $\ell\in\mathbb{N}$, we recall that we denote $[\ell]=\{1,\dots,\ell\}$. 
	Moreover, we have an action of $\mathfrak{S}_n$ on $[\ell]^n$ given by permutation of the coefficients. 
	Let $k_1,\dots,k_\ell\in\mathbb{N}$ such that $\sum_{i=1}^\ell k_i=n$, we define $[\ell]^{k_1,\dots,k_\ell}$ as the subset of $[\ell]^n$ such that $x\in[\ell]^{k_1,\dots,k_\ell}$ if $x$ contains $k_i$ occurrences of $i$. We should notice that all the $[\ell]^{k_1,\dots,k_\ell}$ are stable by the action of $\mathfrak{S}_n$, and that they even provide the partition of $[\ell]^n$ in the orbits of the action of $\mathfrak{S}_n$.

	\begin{definition} \label{l_labeling}
		Let $\mathrm{C}=(\mathrm{C}(n))_{n\in\mathbb{N}}$ endowed with an action of $\mathfrak{S}_n$ on $\mathrm{C}(n)$. The \emph{$\ell$-labelling} of $\mathrm{C}$ is the set $\mathrm{L}(\mathrm{C})$ defined by:
		$$\mathrm{L}(\mathrm{C})=\bigcup_{n\in\mathbb{N}}\mathrm{C}(n)\times_{\mathfrak{S}_n}[\ell]^n$$
		where $\times_{\mathfrak{S}_n}$ is the product of $\mathfrak{S}_n$-set, meaning that for $G$ a group and $A$, $B$ two sets with an action of $G$, then: 
		$$A\times_G B = (A\times B)/((a,g.b)\sim (g.a,b)).$$
		We should notice that $\mathrm{L}(\mathrm{C})$ splits according to the $[\ell]^{k_1,\dots,k_\ell}$. More precisely, for $k_1,\dots,k_\ell\in\mathbb{N}$ such that $\sum_{i=1}^\ell k_i=n$, we define:
		$$\mathrm{L}^{k_1,\dots,k_\ell}(\mathrm{C})=\mathrm{C}_n\times_{\mathfrak{S}_n}[\ell]^{k_1,\dots,k_\ell}.$$
		Then we have:
		$$\mathrm{L}(\mathrm{C})=\sum_{k_1,\dots,k_\ell\in\mathbb{N}}\mathrm{L}^{k_1,\dots,k_\ell}(\mathrm{C}).$$
	\end{definition}

	We should explain why this is the formalisation of a ``labelling'' of $\mathrm{C}$. Indeed, $\mathrm{C}(n)$ should be understood as the set of objects with $n$ ``vertices'' of $\mathrm{C}$ such that the vertices are numbered from $1$ to $n$ to be able to distinguish them. 
	Moreover, the action of $\mathfrak{S}_n$ is exactly the permutation of those vertices. 
	Then a ``labelling'' of $\mathrm{C}$ should correspond to giving a label to each vertex; thus, we can encode this by an element $\underline{c}_x=(c,(x_1,\dots,x_n))\in\mathrm{C}(n)\times[\ell]^n$, where $c$ is the element of $\mathrm{C}(n)$, and $x_i$ is the ``label'' of the $i$-th ``vertex'' of $c$. 
	However, permuting some ``vertices'' should be the same as permuting their labels, which is why one needs to quotient by the diagonal action. 
	Thus, an element of $\mathrm{C}$ ``labelled'' over $[\ell]$ is an element of $\mathrm{C}(n)\times_{\mathfrak{S}_n}[\ell]^n$ for some $n$, hopefully explaining the definition.
This kind of description of a labelling is not new at all. This is the way labelling is managed in the theory of combinatorial species.

	\begin{definition}
		Let $\varphi:[\ell]\to[\ell]$, for $k_1,\dots,k_\ell\in\mathbb{N}$ we define $k^\varphi_1,\dots,k^\varphi_\ell$ by:
		$$k^\varphi_i=\sum_{j\in \varphi^{-1}(i)}k_j$$
		we should notice that $(k^\varphi)^\psi_i=k^{\psi\circ \varphi}_i$. The map $\varphi$ induces a map $\varphi_*$:
		$$\begin{array}{lcccc}
			\varphi_*&:&\mathrm{L}^{k_1,\dots,k_\ell}(\mathrm{C})&\to&\mathrm{L}^{k^\varphi_1,\dots,k^\varphi_\ell}(\mathrm{C})\\
			& & (c,(x_1,\dots,x_n)) & \mapsto & (c,(\varphi(x_1),\dots,\varphi(x_n)))
		\end{array}$$
		Moreover, we should notice that $\varphi_*\circ \psi_*=(\varphi\circ \psi)_*$. The maps $\varphi_*$ will be called \emph{identification maps}, since they correspond to identifying and permuting the labels.
	\end{definition}

	We can now define the labelled elementary differentials, allowing us to constrain some functions to be the same when evaluating the elementary differential.
	\begin{definition}
		Let $\ell\in \mathbb{N}$ and $k_1,\dots,k_\ell\in\mathbb{N}$. Let us denote by $\Hom^{k_1,\dots,k_\ell}(\mathcal{C}_d^{\times\ell},\mathcal{C}_d)$ the set of functions $\mathcal{C}^{\times\ell}_d\to\mathcal{C}_d$ that are $k_i$ homogeneous in the $i$-th input. We define 
		$$F^{k_1,\dots,k_\ell}:\mathrm{L}^{k_1,\dots,k_\ell}(\mathrm{RT})\to\Hom^{k_1,\dots,k_\ell}(\mathcal{C}_d^{\times\ell},\mathcal{C}_d)$$
		by \begin{equs}
		F^{k_1,\dots,k_\ell}((\tau,(x_1,\dots,x_n))(f^1,\dots,f^\ell)=F(\tau)((f^{x_i})_{i\in \{1,\dots,n \}}).
		\end{equs}
		We denote by $\mathcal{W}_d^{k_1,\dots,k_\ell}$ the vector space spanned by $F^{k_1,\dots,k_\ell}(\mathrm{L}^{k_1,\dots,k_\ell}(\mathrm{RT}))$. 
		We should notice that the identification maps are  defined on $\Hom^{k_1,\dots,k_\ell}(\mathcal{C}_d^{\times\ell},\mathcal{C}_d)$, and thus that they are well-defined on $\mathcal{W}^{k_1,\dots,k_\ell}_d$. 
		Moreover, let $\varphi_*$ an identification map, then the following diagram commutes:
		\[\begin{tikzcd}
			\mathrm{L}^{k_1,\dots,k_\ell}(\mathrm{RT}) & \mathcal{W}_d^{k_1,\dots,k_\ell} \\
			\mathrm{L}^{k^\varphi_1,\dots,k^\varphi_\ell}(\mathrm{RT}) & \mathcal{W}_d^{k^\varphi_1,\dots,k^\varphi_\ell}
			\arrow["F^{k_1,\dots,k_\ell}", from=1-1, to=1-2]
			\arrow["\varphi_*"', from=1-1, to=2-1]
			\arrow["\varphi_*"', from=1-2, to=2-2]
			\arrow["F^{k^\varphi_1,\dots,k^\varphi_\ell}", from=2-1, to=2-2]
		\end{tikzcd}\]
		The commutation of this diagram encodes the trivial fact that identifying the labels in the tree and then taking the elementary differential is exactly the same as first taking the elementary differential and then identifying the functions in the elementary differential.
		The maps $G^{k_1,\dots,k_\ell}$ in dimension $1$ for the multi-indices are defined the same way.
	\end{definition}

	After these definitions of labelling, we are ready to lower the requirement of combinatorial descriptions.
	\begin{definition} \label{l_weak_combinatorial}
		Let $\ell\in\mathbb{N}$ an \emph{$\ell$-weak combinatorial description} of the elementary differentials in dimension $d$ is a sequence of sets $\mathrm{CD}_d=(\mathrm{CD}_d(n))_{n\in\mathbb{N}}$ and a family of maps $H_d^{k_1,\dots,k_\ell}:\mathrm{L}^{k_1,\dots,k_\ell}(\mathrm{CD}_d)\to\mathcal{W}_d^{k_1,\dots,k_\ell}$ such that $H_d^{k_1,\dots,k_\ell}$ is $\mathfrak{S}_n$-equivariant with $n=\sum_{i=1}^{\ell} k_i$, the set $H_d^{k_1,\dots,k_\ell}(\mathrm{L}(\mathrm{CD}_d))$ generates $\mathcal{W}_d^{k_1,\dots,k_\ell}$, and for any identification map $\varphi_*$, the following diagram commutes:
                \[\begin{tikzcd}
                        \mathrm{L}^{k_1,\dots,k_\ell}(\mathrm{CD}_d) & \mathcal{W}_d^{k_1,\dots,k_\ell} \\
                        \mathrm{L}^{k^\varphi_1,\dots,k^\varphi_\ell}(\mathrm{CD}_d) & \mathcal{W}_d^{k^\varphi_1,\dots,k^\varphi_\ell}
			\arrow["H_d^{k_1,\dots,k_\ell}", from=1-1, to=1-2]
                        \arrow["\varphi_*"', from=1-1, to=2-1]
                        \arrow["\varphi_*"', from=1-2, to=2-2]
			\arrow["H_d^{k^\varphi_1,\dots,k^\varphi_\ell}", from=2-1, to=2-2]
                \end{tikzcd}\]
		An $\ell$-weak combinatorial description is \emph{faithful} if for all $n\in\mathbb{N}$, the set $H_d^{k_1,\dots,k_\ell}(\mathrm{L}(\mathrm{CD}_d))$ is a basis of $\mathcal{W}_d^{k_1,\dots,k_\ell}$.
        	A combinatorial description is \emph{compatible with $\mathrm{RT}$} if we have a sequence of equivariant surjective maps $p_n:\mathrm{RT}(n)\to\mathrm{CD}_d(n)$ such that for every $ \tau \in \mathrm{RT}(n) $
        \begin{equs}
		F^{k_1,\dots,k_\ell}((\tau,(x_1,\dots,x_n)) = H^{k_1\dots,k_\ell}_d( (p_n(\tau),(x_1,\dots,x_n) ).
        \end{equs} 
                A combinatorial description is \emph{compatible with $\mathrm{MI}$} if we have a sequence of equivariant surjective maps $q_n:\mathrm{CD}_d(n)\to\mathrm{MI}(n)$ such that for $a \in \mathrm{CD}_d(n) $
                \begin{equs}
			\pi_d(H_d^{k_1,\dots,k_\ell}( (a,(x_1,\dots,x_n) ))) = G^{k_1,\dots,k_\ell}( (q_n(a),(x_1,\dots,x_n)) ),
                \end{equs}
		where $\pi_d$ is the canonical projection $\mathcal{W}_d\to\mathcal{W}_1$.
        \end{definition}	

	We should notice that via the canonical inclusion $[\ell-1]\to[\ell]$, any $\ell$-weak combinatorial description provides an $(\ell-1)$-weak combinatorial description. Moreover, any combinatorial description is in particular an $\ell$-weak combinatorial description.

\section{Weak combinatorial descriptions}

\label{Sec::6}

	The natural question is whether the two main theorems we proved still hold when considering $\ell$-weak combinatorial descriptions.

	\begin{theorem} \label{thm_1_b}
		In dimension $d\neq1$, $\mathrm{RT}$ is the only $2$-weak combinatorial description of $\mathcal{W}_d$ compatible with $\mathrm{RT}$.
	\end{theorem}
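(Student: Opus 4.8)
The plan is to follow the proof of Theorem~\ref{thm:1} line by line, the key point being that \emph{every} test-function family used there involves at most two distinct non-linearities, so all of it survives the restriction to the $2$-labelled setting. Let $(\mathrm{CD}_d,(H_d^{k_1,k_2}))$ be a $2$-weak combinatorial description of $\mathcal{W}_d$ compatible with $\mathrm{RT}$ via the equivariant surjections $p_n:\mathrm{RT}(n)\to\mathrm{CD}_d(n)$. As the $p_n$ are already surjective, it suffices to prove they are injective: this forces each $p_n$ to be an $\mathfrak{S}_n$-equivariant bijection, whence $\mathrm{CD}_d=\mathrm{RT}$ and the $H_d^{k_1,k_2}$ are determined by the $F^{k_1,k_2}$. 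So suppose $p_n(\tau_1)=p_n(\tau_2)$ for $\tau_1,\tau_2\in\mathrm{RT}(n)$. For any labelling $x\in[2]^n$ the labelled objects $(\tau_1,x)$ and $(\tau_2,x)$ are sent to the same class $(p_n(\tau_1),x)=(p_n(\tau_2),x)$, so the compatibility identity of Definition~\ref{l_weak_combinatorial} yields, for all $f^1,f^2\in\mathcal{C}_d$,
\begin{equs}\label{eq:2lab}
  F(\tau_1)\bigl((f^{x_i})_{i}\bigr)=F(\tau_2)\bigl((f^{x_i})_{i}\bigr).
\end{equs}
This is precisely the hypothesis of Theorem~\ref{thm:1}, but now only for inputs taking two distinct values.

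First I would recover the multi-index. Restricting \eqref{eq:2lab} to functions of the form $f(x)\vec{e}_x$ reduces, by Proposition~\ref{prp:multi_trees}, to the corresponding identity for $G$ in dimension~$1$. Fix a vertex $i$, take the labelling giving $1$ to $i$ and $2$ to every other vertex, and evaluate on an arbitrary $g^1$ together with $g^2=e^{x}$; since every derivative of $e^{x}$ is again $e^{x}$, the $n-1$ factors coming from the vertices labelled $2$ collapse to $e^{(n-1)x}$ irrespective of their orders of derivation, leaving $(g^1)^{(\varphi_1(i))}=(g^1)^{(\varphi_2(i))}$ for all $g^1$, hence $\varphi_1(i)=\varphi_2(i)$. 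Running over all $i$ shows that $\tau_1$ and $\tau_2$ carry the same multi-index. Note that the faithfulness argument for $G$ cannot be quoted directly, as it uses $n$ distinct test functions; the content here is that singling out one vertex already makes two functions enough.

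By Proposition~\ref{prp:transitiveaction} there is then a bijection $\sigma$ of $[n]$ with $\sigma.\tau_1=\tau_2$, so \eqref{eq:2lab} reads $F(\tau_1)=F(\sigma.\tau_1)$ on all $2$-labelled inputs. Now I would invoke Proposition~\ref{prp:geofixed}: for a fixed vertex $v$ its proof puts the single function $y^{|C_v|}\vec{e}_x+x^n\vec{e}_y$ on $v$ together with all of its children $C_v$, and the single function $x^n\vec{e}_x$ on every remaining vertex, i.e. it uses exactly the $2$-labelling carrying label $1$ on $\{v\}\cup C_v$ and label $2$ elsewhere. Each such evaluation is supplied by \eqref{eq:2lab}, so the argument runs verbatim and gives $\sigma(C_v)=C_v$ for every $v$, hence $E_\sigma=E$ and $\sigma.\tau_1=\tau_1$. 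Combined with $\sigma.\tau_1=\tau_2$ this gives $\tau_1=\tau_2$, establishing injectivity of $p_n$ and the theorem.

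The only genuinely new ingredient beyond Theorem~\ref{thm:1} is the two-function extraction of the multi-index, and I expect this to be the step requiring the most care; the rest is the bookkeeping verification that the families of Propositions~\ref{prp:geofixed} and~\ref{prp:multi_trees} never exceed two non-linearities. It is worth emphasising that $\ell=2$ is sharp here, since the construction of Proposition~\ref{prp:geofixed} genuinely needs two different functions — which is exactly why the single non-linearity case must behave differently (cf.\ Theorem~\ref{compatible_trees_b}).
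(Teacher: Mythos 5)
Your proposal is correct and follows the same route as the paper: the paper's entire proof of Theorem~\ref{thm_1_b} is the observation that the proof of Theorem~\ref{thm:1} only ever evaluates elementary differentials on at most two distinct functions, and your argument is exactly this inspection carried out in detail. One point in your favour: the paper's one-line claim is not quite verbatim true at the multi-index identification step. In the proof of Theorem~\ref{thm:1}, the deduction $m_1=m_2$ from $\lambda G(m_1)=G(m_2)$ quotes the faithfulness of $G$, whose proof uses the test functions $f^m_i=x^{\varphi(i)}/(\varphi(i))!$ --- up to $n$ distinct functions --- so that step does not survive the restriction to two labels without modification. You correctly flag this and repair it with the two-function argument (label one vertex with an arbitrary $g^1$ and all others with $g^2=e^x$, so the factors from the second label collapse to $e^{(n-1)x}$ and one reads off $\varphi_1(i)=\varphi_2(i)$ vertex by vertex), which is a genuine, if small, addition to what the paper writes. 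The remaining steps are as you say: Proposition~\ref{prp:transitiveaction} is purely combinatorial, and the test family in the proof of Proposition~\ref{prp:geofixed} (the function $y^{|C_v|}\vec{e}_x+x^n\vec{e}_y$ on $\{v\}\cup C_v$ and $x^n\vec{e}_x$ elsewhere) is precisely a $2$-labelling, so the argument runs verbatim and yields $\tau_1=\tau_2$.
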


	\begin{proof}
		By inspecting the proof of Theorem~\ref{thm:1} we notice that we have only evaluated the elementary differentials with at most $2$ distinct functions. Thus, the same proof holds for $2$-weak combinatorial descriptions.
	\end{proof}

	\begin{theorem} \label{thm_2_b}
		In dimension $d\neq1$, $\mathcal{W}_d$ does not admit any faithful $2d$-weak combinatorial description.
	\end{theorem}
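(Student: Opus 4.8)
The plan is to transport the representation-theoretic obstruction of Corollary~\ref{representation_d} into the labelled world, exploiting that $2d$ labels are exactly enough to keep all but one pair of the $2d+1$ nodes of a linear tree distinct. I would work at the arity $n=2d+1$ and fix the label-partition $\mathbf{k}=(2,1,\dots,1)$, i.e.\ $k_1=2$ and $k_2=\dots=k_{2d}=1$, so that one label is repeated and the other $2d-1$ labels occur once. The identification maps $\varphi_*$ associated with permutations $\varphi$ of the $2d-1$ singleton labels are automorphisms of $\mathrm{L}^{\mathbf{k}}(\mathrm{CD}_d)$ and of $\mathcal{W}_d^{\mathbf{k}}$, so both sets carry an action of $\mathfrak{S}_{2d-1}$. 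Since the defining commuting diagram makes $H_d^{\mathbf{k}}$ equivariant for these maps, a faithful $2d$-weak description makes $H_d^{\mathbf{k}}$ an $\mathfrak{S}_{2d-1}$-equivariant bijection onto a basis of $\mathcal{W}_d^{\mathbf{k}}$; in other words $\mathcal{W}_d^{\mathbf{k}}$ would admit an $\mathfrak{S}_{2d-1}$-stable basis.

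First I would isolate the linear part. Evaluating an elementary differential on affine maps $f^i(x)=A_ix+b_i$ annihilates every tree possessing a node with at least two children, because such a node contributes a derivative of order $\ge 2$ of an affine map; hence only linear trees survive. Moreover a differential coming from a linear tree depends on each argument only through its value and first derivative, so affine maps already separate these differentials. Consequently, after identifying the image, restriction to affine maps is an $\mathfrak{S}_{2d-1}$-equivariant projection of $\mathcal{W}_d^{\mathbf{k}}$ onto $\mathcal{LW}_d^{\mathbf{k}}$, the span of the labelled linear differentials. Combining this with the observation that a relation among elementary differentials restricts on affine maps to a relation among linear trees alone (so there are no relations genuinely mixing linear and non-linear trees), one gets an $\mathfrak{S}_{2d-1}$-decomposition $\mathcal{W}_d^{\mathbf{k}}=\mathcal{LW}_d^{\mathbf{k}}\oplus U$ with $U$ the span of the non-linear differentials. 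The crux — and what I expect to be the main obstacle — is to promote the stable basis of $\mathcal{W}_d^{\mathbf{k}}$ to a stable basis of the summand $\mathcal{LW}_d^{\mathbf{k}}$, since a generic stable basis of a representation need not be adapted to a subrepresentation. I would attack this by showing that the faithful basis maps under the affine projection to an $\mathfrak{S}_{2d-1}$-stable generating family of $\mathcal{LW}_d^{\mathbf{k}}$ of cardinality equal to $\dim\mathcal{LW}_d^{\mathbf{k}}$, using Proposition~\ref{correspondance_left} to control the linear differentials through chains of $\trigl$.

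Next I would identify $\mathcal{LW}_d^{\mathbf{k}}$ representation-theoretically. By Proposition~\ref{correspondance_left} and Remark~\ref{rmk:key}, $\mathcal{LW}_d(2d+1)$ is the quotient of the regular representation of $\mathfrak{S}_{2d+1}$ by the left identities, whose character equals $\varepsilon\cdot\mathrm{perm}$, the product of the signature with the natural permutation character. Letting the two repeated-label nodes carry the transposition $\tau=(1\,2)$, passing to the block-symmetrization gives $\mathcal{LW}_d^{\mathbf{k}}\cong\bigl(\mathcal{LW}_d(2d+1)\bigr)^{\mathfrak{S}_2}$ as a representation of the $\mathfrak{S}_{2d-1}$ permuting the remaining $2d-1$ nodes. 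For any $g\in\mathfrak{S}_{2d-1}$, which commutes with $\tau$, the standard averaging identity yields $\rchi_{\mathcal{LW}_d^{\mathbf{k}}}(g)=\tfrac12\bigl(\rchi_{\mathcal{LW}_d(2d+1)}(g)+\rchi_{\mathcal{LW}_d(2d+1)}(g\tau)\bigr)$.

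Finally I would take $g=c$ a $(2d-1)$-cycle on the singleton nodes and compute, just as in Corollary~\ref{representation_d}. As an element of $\mathfrak{S}_{2d+1}$ the cycle $c$ is even and fixes exactly the two $\tau$-nodes, so $\rchi_{\mathcal{LW}_d(2d+1)}(c)=0-\varepsilon(c)\cdot 2=-2$, using $\rchi_{\mathrm{Reg}}(c)=0$; while $c\tau$ has cycle type $(2d-1,2)$ and is fixed-point-free, so $\rchi_{\mathcal{LW}_d(2d+1)}(c\tau)=0-\varepsilon(c\tau)\cdot 0=0$. Hence $\rchi_{\mathcal{LW}_d^{\mathbf{k}}}(c)=\tfrac12(-2+0)=-1$. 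Because a representation carrying a stable basis has non-negative character values (the value at $c$ counts the basis elements fixed by $c$), the space $\mathcal{LW}_d^{\mathbf{k}}$ admits no $\mathfrak{S}_{2d-1}$-stable basis. This contradicts the reduction obtained from the isolation step, and therefore no faithful $2d$-weak combinatorial description of $\mathcal{W}_d$ exists when $d\neq 1$.
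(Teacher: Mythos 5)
Your endgame is sound, and in fact more explicit than the paper's. The identification $\mathcal{W}_d^{\mathbf{k}}\cong(\mathcal{W}_d(2d+1))^{\mathfrak{S}_2}$ by polarization (restriction to the diagonal in two slots kills exactly the antisymmetric part in characteristic $0$), the equivariance of the faithful basis under content-preserving identification maps, and the character computation $\rchi_{\mathcal{LW}_d^{\mathbf{k}}}(c)=\tfrac12(-2+0)=-1$ are all correct; I checked in particular that the symmetrized relation module $I^{\mathfrak{S}_2}$ (with $I$ the span of the permutations of $\mathrm{s}_{2d}$, of character $\varepsilon\cdot\mathrm{fix}$ by Remark~\ref{rmk:key}) has dimension $\tfrac12\bigl((2d+1)-(2d-1)\bigr)=1$, so your choice of putting the repeated label on two of the \emph{alternated} slots — under which $\mathrm{s}_{2d}$ itself becomes tautological — still leaves a nonzero relation after symmetrization, and the negative character survives. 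This is a genuinely different and slightly sharper route than the paper, which instead chooses the labelling that keeps $f_1,\dots,f_{2d}$ distinct (reusing a label only on $f_{2d+1}$) precisely so that $\mathrm{s}_{2d}$ remains non-tautological, and then invokes the argument of Theorem~\ref{thm:2} verbatim.

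The genuine gap is the step you yourself flag as the crux: passing from an $\mathfrak{S}_{2d-1}$-stable basis of $\mathcal{W}_d^{\mathbf{k}}$ to one of $\mathcal{LW}_d^{\mathbf{k}}$. Your proposed fix cannot work as sketched. The affine-restriction map is indeed an equivariant projection annihilating all non-linear tree differentials (and its injectivity on the linear span at arity $2d+1$, which you assert without proof, reduces to the nontrivial fact that the multilinear identities of $d\times d$ matrices in degree exactly $2d$ are spanned by the standard polynomial), but the image of a stable basis under a projection is merely a stable \emph{spanning} family: a basis of $\mathcal{W}_d^{\mathbf{k}}=\mathcal{LW}_d^{\mathbf{k}}\oplus U$ need not be adapted to this decomposition, so many basis vectors project to nonzero, linearly dependent elements, and there is no mechanism forcing the cardinality to equal $\dim\mathcal{LW}_d^{\mathbf{k}}$. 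A stable spanning set yields no contradiction — the negative character value only forbids stable \emph{bases}. The paper closes exactly this hole with compatibility with $\mathrm{MI}$: in the proof of Theorem~\ref{thm:2}, the preimage $q_n^{-1}(\mathrm{LMI}(n))$ of the linear multi-indices carves a stable basis of $\mathcal{LW}_d$ out of the faithful one, and the paper's proof of Theorem~\ref{thm_2_b} is a two-line reduction to that argument (the only new check being that $\mathrm{s}_{2d}$ is not tautological once $2d$ distinct functions are available), hence inherits that hypothesis. Without some such compatibility, establishing the statement would require knowing \emph{all} identities of $\mathcal{W}_d$ in arity $2d+1$, which is currently available only for $d=2$ by computer search (Proposition~\ref{prp:comp}, Theorem~\ref{thm:3}). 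Supply the $\mathrm{MI}$-compatibility mechanism in the labelled setting and your character computation completes the proof; as written, the isolation step is missing.
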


	\begin{proof}
		The obstruction we used to prove Theorem~\ref{thm:2}, is the identity $\mathrm{s}_{2d}$. 
		Thus, for the same proof to hold, it is sufficient to check that this relation is not tautological. 
		We notice that $\mathrm{s}_{2d}$ is not tautological as long as $f_1$ up to $f_{2d}$ are distinct. Thus, the same proof holds as long as we have at least $2d$ distinct functions.
	\end{proof}
	
	\begin{remark}
		Let us explain Theorem~\ref{thm_1_b} in plain words. A $2$-weak combinatorial description of $\mathcal{W}_d$ is the data of two objects, $\mathrm{CD}_d$ and $H_d$. 
		The first one $\mathrm{CD}_d$ is a sequence of combinatorial objects $(\mathrm{CD}_d(n))_{n\in\mathbb{N}})$ with $n$ distinguishable vertices, and an action of $\mathfrak{S}_n$ on $\mathrm{CD}_d(n)$ by permutation of those vertices. 
		The second one $H_d$ associate to any labelling $x=(x_1,\dots,x_n)$ of $c\in\mathrm{CD}_d$ with two colours $1$ and $2$, an elementary differential $H^{k_1,k_2}_d((c,x))\in\mathcal{W}^{k_1,k_2}_d$ where $k_1$ is the number of vertex of colour $1$, and $k_2$ of colour $2$.
Thus, in the definition of a $2$-weak combinatorial description, it is assumed that labelled objects comes from the labelling of combinatorial objects with distinguishable vertices.
		In particular, the Theorem~\ref{thm_1_b} states that for any two trees $\tau_1\neq\tau_2$, we have a $2$-labelling $x=(x_1,\dots,x_n)$ such that
		$$F^{k_1,k_2}((\tau_1,x))\neq F^{k_1,k_2}((\tau_2,x)).$$
		One should be careful since it does not mean that the above inequality hold for all $2$-labelling (which is clearly false, see Theorem~\ref{compatible_trees_b}).
	\end{remark}

	\begin{remark} \label{remark_SPDE}
		Although we just discussed the limitations of Theorem~\ref{thm_1_b}, it already covers some examples for singular SPDEs. 
		Let us consider the following equation:
                \begin{equ}[e:2noises]
			\d_t u = \d_x^2 u + F_1(u,\d_x u)\, \xi_1 + F_2(u,\d_x u)\, \xi_2\;,
                \end{equ}
		with $\xi_1,\xi_2$ two independent space-time noises.
		Here, we should notice that the two non-linearities $F_1$ and $F_2$ play symmetric roles.
		Because $F_1$ and $F_2$ can be permuted, for any decorated tree $\tau$ with a given vertex $v$ labelled by $\xi_1$, the decorated tree $\tau'$ where $v$ is labelled by $\xi_2$, and every else is identical to $\tau$ also needs to be considered. 
		Thus, it is possible to apply Theorem~\ref{thm_1_b} in this context, and we get the following result. 
		\begin{itemize}
			\item The elementary differentials are encoded via the $2$-labellings of combinatorial objects with distinguishable vertices (the fact that the vertices a distinguishable is needed in order to be able to label them). Let us denote by $\mathrm{CD}(n)$ the set of such combinatorial objects with $n$ numbered vertices.
			\item The sets $\mathrm{CD}(n)$ are compatible with the set $\mathrm{RT}(n)$, meaning that $\mathrm{CD}(n)$ is a quotient of $\mathrm{RT}(n)$ via a projection $p_n$, and the elementary differential associated to $c\in\mathrm{CD}(n)$ is the same as the elementary differential associated to $\tau\in\mathrm{RT}(n)$ if $p_n(\tau)=c$.
		\end{itemize}
		Then the only possible choice is to use $\mathrm{RT}$. We get an analogous result in numerical analysis. Let us consider the equation:
                \begin{equ}[e:2splitODE]
			\d_t u = F_1(u) + F_2(u) \;.
                \end{equ}
		Here, the elementary differentials are used to write down the B-series associated with some numerical methods.
		Then the only possible choice is to use $\mathrm{RT}$.

%		Then by Theorem~\ref{thm_1_b}, if one wants to encode elementary differential in a combinatorial way to write B-series expansion, such that:
%		\begin{itemize}
%			\item The elementary differentials are encoded via the $2$-labellings of combinatorial objects with distinguishable vertices (the fact that the vertices are distinguishable is needed to be able to label them). Let us denote by $\mathrm{CD}(n)$ the set of such combinatorial objects with $n$ numbered vertices.
%			\item The sets $\mathrm{CD}(n)$ are compatible with the set $\mathrm{RT}(n)$, meaning that $\mathrm{CD}(n)$ is a quotient of $\mathrm{RT}(n)$ via a projection $p_n$, and the elementary differential associated to $c\in\mathrm{CD}(n)$ is the same as the elementary differential associated to $\tau\in\mathrm{RT}(n)$ if $p_n(\tau)=c$.
%		\end{itemize}
%		Then the only possible choice is to use $\mathrm{RT}$.
%Since this holds when splitting into two, it also holds when splitting into more than two components.
	\end{remark}

	The case of $1$-weak combinatorial descriptions is much more complicated than the $\ell$-weak combinatorial description for $\ell\neq 2$. Indeed, all the labels being the same, one can always quotient by the group action.

	\begin{theorem} \label{compatible_trees_b}
		Let $\mathrm{RT}_\mathfrak{S}$ defined by $\mathrm{RT}_\mathfrak{S}(n)=\mathrm{RT}(n)/\mathfrak{S}_n$. 
		Then $\mathrm{RT}_\mathfrak{S}$ is a $1$-weak combinatorial description of $\mathcal{W}_d$ compatible with $\mathrm{RT}$.
	\end{theorem}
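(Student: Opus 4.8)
The plan is to unwind the definition of a $1$-weak combinatorial description in the case $\ell=1$ and to check that $\mathrm{RT}_\mathfrak{S}$, equipped with the obvious single-function evaluation map, satisfies every clause. First I would observe that for $\ell=1$ the labelling set $[1]^n$ is a single point on which $\mathfrak{S}_n$ acts trivially, so that $\mathrm{L}^{n}(\mathrm{C})=\mathrm{C}(n)\times_{\mathfrak{S}_n}[1]^n=\mathrm{C}(n)/\mathfrak{S}_n$ for any sequence $\mathrm{C}$, the only decomposition index being $k_1=n$. In particular $\mathrm{L}^{n}(\mathrm{RT})=\mathrm{RT}(n)/\mathfrak{S}_n$, while $\mathrm{L}^{n}(\mathrm{RT}_\mathfrak{S})=\mathrm{RT}_\mathfrak{S}(n)=\mathrm{RT}(n)/\mathfrak{S}_n$, since $\mathfrak{S}_n$ already acts trivially on $\mathrm{RT}_\mathfrak{S}(n)$. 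The target space $\mathcal{W}_d^{n}$ is, by definition, spanned by the single-function evaluations $F^{n}((\tau,(1,\dots,1)))(f)=F(\tau)(f,\dots,f)$ for $\tau\in\mathrm{RT}(n)$.

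Next I would take $p_n:\mathrm{RT}(n)\to\mathrm{RT}_\mathfrak{S}(n)$ to be the canonical quotient map, which is manifestly equivariant and surjective, and define $H_d^{n}:\mathrm{L}^{n}(\mathrm{RT}_\mathfrak{S})\to\mathcal{W}_d^{n}$ by $H_d^{n}(\overline{\tau}):=F^{n}((\tau,(1,\dots,1)))$, where $\overline{\tau}$ denotes the orbit of $\tau$. The one substantive point is well-definedness: I must check that the right-hand side depends only on the $\mathfrak{S}_n$-orbit of $\tau$. This follows from the equivariance of $F$ that is built into the statement that $\mathrm{RT}$ is a combinatorial description: for $\sigma\in\mathfrak{S}_n$ the map $F$ intertwines the permutation of vertices on $\mathrm{RT}(n)$ with the permutation of the $n$ inputs on $\mathcal{W}_d(n)$, and permuting inputs that are all set equal to the same function $f$ changes nothing, so that $F(\sigma.\tau)(f,\dots,f)=F(\tau)(f,\dots,f)$. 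Hence $H_d^{n}$ descends to the quotient.

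It then remains to verify the remaining clauses, all of which are immediate. Equivariance of $H_d^{n}$ is vacuous, because $\mathfrak{S}_n$ acts trivially both on $\mathrm{L}^{n}(\mathrm{RT}_\mathfrak{S})$ and on $\mathcal{W}_d^{n}$ (there is nothing to permute among identical copies of $f$); the generation condition holds by construction, as the image of $H_d^{n}$ is exactly the generating set $\{F(\tau)(f,\dots,f):\tau\in\mathrm{RT}(n)\}$ of $\mathcal{W}_d^{n}$; the identification-map square is trivial, since the only self-map of $[1]$ is the identity; and compatibility with $\mathrm{RT}$ holds by the very definition $H_d^{n}(p_n(\tau))=F^{n}((\tau,(1,\dots,1)))$. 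I do not expect any genuine obstacle: the only thing requiring care is the well-definedness in the previous paragraph, and it is a direct consequence of the equivariance of $F$. One should contrast this with the $\ell=2$ situation of Theorem~\ref{thm_1_b}, where distinct labels obstruct collapsing the orbit and force $\mathrm{CD}_d=\mathrm{RT}$; here a single label always permits collapsing the orbit, which is precisely what makes $\mathrm{RT}_\mathfrak{S}$ admissible.
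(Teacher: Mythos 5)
Your proposal is correct and matches the paper's (implicit) argument: the paper states this theorem without a detailed proof, relying exactly on the observation that for $\ell=1$ all group actions trivialise, so one can quotient $\mathrm{RT}$ by the $\mathfrak{S}_n$-action. Your unwinding of the definitions — in particular the well-definedness of $H_d^n$ on orbits via the $\mathfrak{S}_n$-equivariance of $F$ together with the invariance of $F(\tau)(f,\dots,f)$ under permutation of identical inputs — is precisely the content the paper leaves implicit, and all the remaining clauses are, as you say, vacuous for $\ell=1$.
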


	It would be very interesting to know if one can still get a result similar to Theorem~\ref{thm:1} in the case of $1$-weak combinatorial description. Namely answering the following question:
	\begin{itemize}
		\item[(Q1)] Is $\mathrm{RT}_\mathfrak{S}$ the only $1$-weak combinatorial description of $\mathcal{W}_d$ compatible with $\mathrm{RT}_\mathfrak{S}$ in dimension $d\neq 1$?
	\end{itemize}

	\begin{remark} \label{remark_SPDE}
		Let us stress that answering this question would cover interesting examples for singular SPDEs not covered by Theorem~\ref{thm_1_b}. The stochastic geometric heat equation is given in local coordinates by
		\begin{equ}[e:genClass_intro]
			\d_t u^\alpha = \d_x^2 u^\alpha + \Gamma^\alpha_{\beta\gamma}(u)\,\d_x u^\beta\d_x u^\gamma +  \sigma_i^\alpha(u)\, \xi_i\;,
		\end{equ}
		where $ i \in \lbrace 1,...,m \rbrace $ and the functions
		$\Gamma^\alpha_{\beta\gamma},\sigma_i^\alpha:\mathbb{R}^d\to\mathbb{R}$ with $\Gamma^\alpha_{\beta\gamma}=\Gamma^\alpha_{\gamma\beta}$ are smooth. 
		Here, the $\xi_i$ are independent space-time noises. Such dynamics describe a stochastic evolution taking values in the loops on a Riemannian manifold. 
		In the definition of the elementary differentials associated with the decorated trees arising from the local perturbative expansion of the solution of  \eqref{e:genClass_intro}, one will have derivatives in $u$ performed on the $\Gamma^\alpha_{\beta\gamma}(u)$ and the $\sigma_i^\alpha(u)$. 
		Because $\Gamma^\alpha_{\beta\gamma}$, and $\sigma_i^\alpha$ does not play symmetric roles, thus applying Theorem~\ref{thm_1_b} would be very superficial. 
		Indeed, replacing a $\Gamma^\alpha_{\beta\gamma}$ by a $\sigma_i^\alpha$ in a decorated $\tau$ is a quite unnatural operation. The tree and its elementary differential are still well-defined, but because $\Gamma^\alpha_{\beta\gamma}$ and $\sigma_i^\alpha$ play different roles, it should not be considered in the local expansion of the solution.

		We should point out that in the case of dimension $2$ with a space-time white noise, $54$ decorated trees appear for describing the counter-terms of the renormalised equation with a maximal number of vertices being $7$. The same computer program as in Proposition~\ref{prp:comp} can be used to check that those $54$ trees do not over-parametrise their respective elementary differentials.
	\end{remark}

	For Theorem~\ref{thm:2}, it cannot be adapted to $1$-weak combinatorial description. Indeed, we have the following result:

	\begin{theorem} \label{compatible_multi_b}
		There is a faithful $1$-weak combinatorial description of $\mathcal{W}_d$ compatible with $\mathrm{MI}_\mathfrak{S}$.
	\end{theorem}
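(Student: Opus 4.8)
The plan is to prove existence by a direct linear-algebra construction that exploits the collapse occurring when $\ell=1$. For $\ell=1$ all labels coincide, so $\mathrm{L}^n(\mathrm{C})=\mathrm{C}(n)/\mathfrak{S}_n$, the only relevant spaces are $\mathcal{W}_d^n$ (the case $k_1=n$), and these consist of the $n$-homogeneous maps $f\mapsto F(\tau)(f,\dots,f)$ obtained by evaluating elementary differentials on copies of a single $f\in\mathcal{C}_d$. On these diagonal spaces the $\mathfrak{S}_n$-action is trivial, since permuting equal inputs does nothing, and there is no nontrivial identification map $\varphi\colon[1]\to[1]$, so the commuting-square condition of Definition \ref{l_weak_combinatorial} is vacuous and all equivariance requirements hold automatically. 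Hence a faithful $1$-weak description compatible with $\mathrm{MI}_\mathfrak{S}$ amounts to producing a set $\mathrm{CD}_d(n)$, a bijection $H_d^n$ from $\mathrm{CD}_d(n)/\mathfrak{S}_n$ onto a basis of $\mathcal{W}_d^n$, and a surjection $q_n\colon\mathrm{CD}_d(n)/\mathfrak{S}_n\to\mathrm{MI}_\mathfrak{S}(n)$ with $\pi_d\circ H_d^n=G^n\circ q_n$, where $G^n\colon\mathrm{MI}_\mathfrak{S}(n)\to\mathcal{W}_1^n$ is the one-function multi-index map.

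First I would record two elementary facts. (i) The family $\{G^n(m)\}_{m\in\mathrm{MI}_\mathfrak{S}(n)}$ is a basis of $\mathcal{W}_1^n$: if $m$ records $n_k$ vertices of arity $k$, then $G^n(m)(f)=\prod_k (f^{(k)})^{n_k}$, and distinct $m$ give distinct monomials in the algebraically independent jet coordinates $f(x_0),f'(x_0),\dots$, hence are linearly independent; they span because every $F(\tau)(f,\dots,f)$ in dimension $1$ equals $G^n(\pi(\tau))(f,\dots,f)$ by Proposition \ref{prp:multi_trees} and $\pi$ is onto $\mathrm{MI}_\mathfrak{S}(n)$. (ii) The projection $\pi_d\colon\mathcal{W}_d^n\to\mathcal{W}_1^n$ is surjective: since the canonical projections are compatible with $F$, one has $\pi_d(F(\tau)(f,\dots,f))=G^n(\pi(\tau))(\bar f,\dots,\bar f)$ with $\bar f$ the restriction of $f$ to the first coordinate, and every $g\in\mathcal{C}_1$ is realized as such a $\bar f$.

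Then I would invoke the standard ``basis adapted to a surjection'' lemma: given a surjective linear map $\pi\colon U\to V$ and a basis $\{v_m\}_{m\in M}$ of $V$, choose lifts $\hat u_m$ with $\pi(\hat u_m)=v_m$, a basis $\{w_1,\dots,w_s\}$ of $\ker\pi$, and a fixed $m_0\in M$; then $\{\hat u_m\}_{m\in M}\cup\{\hat u_{m_0}+w_j\}_{j\le s}$ is a basis of $U$ each of whose elements is sent by $\pi$ to a single $v_m$. Applying this with $U=\mathcal{W}_d^n$, $V=\mathcal{W}_1^n$, $\pi=\pi_d$, $v_m=G^n(m)$ and $s_n=\dim\ker\pi_d$, I set $\mathrm{CD}_d(n)=\mathrm{MI}_\mathfrak{S}(n)\sqcup\{1,\dots,s_n\}$ with trivial $\mathfrak{S}_n$-action, let $H_d^n$ send an index to the corresponding basis vector of $\mathcal{W}_d^n$, and let $q_n$ send $m\mapsto m$ and $j\mapsto m_0$. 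By construction $H_d^n$ is a bijection onto a basis (faithfulness) whose image generates $\mathcal{W}_d^n$, the map $q_n$ is surjective onto $\mathrm{MI}_\mathfrak{S}(n)$, and $\pi_d\circ H_d^n=G^n\circ q_n$ holds term by term, giving the desired description.

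The one genuine point, and the reason $\mathrm{MI}$-compatibility is not automatic, is that the compatibility condition forces every chosen basis vector of $\mathcal{W}_d^n$ to project under $\pi_d$ to a single multi-index monomial $G^n(m)$ rather than to a combination, and never to $0$ (since $G^n(m)\neq 0$). A naive completion of $\{\hat u_m\}$ by kernel vectors would violate this, as those project to $0$; the lemma resolves exactly this obstacle by adding each kernel basis vector to a fixed lift $\hat u_{m_0}$, keeping every basis element over the honest monomial $G^n(m_0)$ while still spanning $\ker\pi_d$. I expect this to be the crux. Contrasting with the multilinear setting of Theorem \ref{thm:2}, where the $\mathfrak{S}_n$-representation rigidity of $\mathcal{LW}_d(2d+1)$ forbids any stable basis, here the restriction to the diagonal destroys that representation-theoretic structure and leaves precisely the freedom needed to build a compatible basis.
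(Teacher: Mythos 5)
Your proposal is correct, and it reaches the theorem by the same non-constructive strategy as the paper: reduce the $\ell=1$ case to pure linear algebra (trivial $\mathfrak{S}_n$-actions, vacuous identification-map diagrams) and then exhibit a basis of the diagonal space $\mathcal{W}_d^n$ each of whose elements projects under $\pi_d$ to a \emph{single} monomial $G^n(m)$, $m\in\mathrm{MI}_\mathfrak{S}(n)$. The packaging, however, differs in a way worth noting. The paper first decomposes $\mathcal{W}^n_d=\oplus_{m}\mathcal{W}^n_d[m]$ with $\mathcal{W}^n_d[m]=\mathrm{Span}(\pi_d^{-1}(m))$ and chooses an adapted basis $B_m$ with $\pi_d(B_m)=\{m\}$ fiber by fiber, so the kernel of $\pi_d$ gets distributed across the summands; read literally, $\mathrm{Span}(\pi_d^{-1}(m))$ contains all of $\ker\pi_d$ for every $m$, so the direct-sum assertion implicitly relies on the finer reading (the span of the diagonal elementary differentials $F(\tau)(f,\dots,f)$ with $\pi(\tau)=m$, which are separated by the multidegree in jet-order variables). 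Your route bypasses this entirely: the ``basis adapted to a surjection'' lemma concentrates a basis of $\ker\pi_d$ over a single fixed fiber $m_0$ (via the vectors $\hat u_{m_0}+w_j$), so you never need any direct-sum decomposition of $\mathcal{W}_d^n$ indexed by multi-indices, only surjectivity of $\pi_d$ and the fact that $\{G^n(m)\}_{m\in\mathrm{MI}_\mathfrak{S}(n)}$ is a basis of $\mathcal{W}_1^n$ — both of which you verify correctly (the monomial-independence argument in jet coordinates is exactly what is needed on the diagonal, where the paper's distinct-functions faithfulness proposition does not apply verbatim). The trade-off: the paper's fiberwise choice keeps each $B_m$ inside the subspace genuinely attached to $m$, which is aesthetically closer to a ``description lying over $\mathrm{MI}_\mathfrak{S}$'', while your construction is more elementary and makes transparent why compatibility with $\mathrm{MI}_\mathfrak{S}$ is not automatic — your closing observation that naive kernel completions project to $0$ rather than to some $G^n(m)$ identifies precisely the point both proofs must address.
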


	\begin{proof}
		We know that $\mathrm{MI}$ is a faithful combinatorial description of $\mathcal{W}_1$, and thus $\mathrm{MI}_\mathfrak{S}$ is a faithful $1$-weak combinatorial description of $\mathcal{W}_1$. 
		Thus, we have a basis of $\mathcal{W}^n_1$ indexed by $\mathrm{MI}_\mathfrak{S}(n)$ for each $n\in\mathbb{N}$, by a slight abuse of notation, let us identify those bases with $\mathrm{MI}_\mathfrak{S}$. 
		For $m\in\mathrm{MI}_\mathfrak{S}$, let $\mathcal{W}^n_d[m]=\mathrm{Span}(\pi_d^{-1}(m))$ with $\pi_d:\mathcal{W}^n_d\to\mathcal{W}^n_1$ the canonical projection. We have:
		$$\mathcal{W}^n_d=\oplus_{m\in\mathrm{MI}_\mathfrak{S}(n)}\mathcal{W}^n_d[m]$$
		Thus, it is sufficient to choose a basis $B_m$ of $\mathcal{W}^n_d[m]$ for each $m\in\mathrm{MI}_\mathfrak{S}(n)$ such that $\pi_d(B_m)=\{m\}$, which exists by definition of $\mathcal{W}^n_d[m]$.
	\end{proof}
	
	We could point out that although we proved that a faithful $1$-weak combinatorial description compatible with $\mathrm{MI}_\mathfrak{S}$ does exist in any dimension $d$, we did not construct it for $d\neq 1$, and this $1$-weak combinatorial description is very mysterious. For example, not even its arity-wise cardinality is known, and in fact knowing its arity-wise dimension would already be a huge leap forward in the study of polynomial identities of the Witt algebra $W_d$.

\section{Discussion of the results}

	Let us end this article with an informal discussion of the results we obtained. 
	The main goal of this article is to tackle the question of the combinatorial description of elementary differentials.
	Indeed, elementary differentials have proven to be a useful and powerful tool both in numerical analysis and SPDEs.
	The rooted trees are a convenient combinatorial description of those elementary differentials, allowing to use combinatorics, and algebra to study B-series, and even leading to a categorical approach of B-series.
	However, it is well known that for any fixed dimension $d$, the rooted trees over-parametrise the elementary differentials. 
	In the case of dimension $1$, there is a way to bypass this over-parametrisation using multi-indices. 
	The ultimate goal would be to find similar combinatorial objects in dimension $d\neq 1$ with sufficiently nice properties so that it could actually be used.
	Let us give three kinds of properties it should satisfy:
	\begin{itemize}
		\item It should respect the algebraic structure of $\mathcal{W}_d$.
		\item The over-parametrisation should be kept at a minimum.
		\item It should be compatible with the usual interpretation with rooted trees and multi-indices.
	\end{itemize}
	Let us start with the compatibility with the algebraic structure of $\mathcal{W}_d$.
	The space $\mathcal{W}_d$ has a very rich algebraic structure (it is in fact an algebraic operad). In this article, we gave two notions of compatibility. The \emph{combinatorial descriptions} and the \emph{$\ell$-weak combinatorial descriptions}. One should point out that the $1$-weak combinatorial descriptions behave in a very different way than the $\ell$-weak combinatorial descriptions. 
	\begin{itemize}
		\item The combinatorial descriptions are the ones respecting the most the algebraic structure of $\mathcal{W}_d$. With such a combinatorial description, the splitting method would generalise in a straightforward way, and it is possible to tell in which ``vertex'' we put each function when evaluating the corresponding elementary differential.
		\item The $\ell$-weak combinatorial descriptions with $\ell\neq 1$ respect less algebraic structure. The splitting method would still generalise, but only up to $\ell$ distinct functions. However, it is still possible to distinguish the ``vertices''.
		\item In the case $\ell=1$, all the group actions become trivial, and no algebraic structure is preserved. ``Vertices'' are no longer distinguishable, and we are left to find a basis of a vector space.
	\end{itemize}
	For the over-parametrisation, we define the notion of \emph{faithful} description. 
	\begin{itemize}
		\item In the case of combinatorial descriptions, and $\ell$-weak combinatorial descriptions with $\ell\neq 1$, then being faithful means that no linear relation is satisfied aside from the tautological one. The tautological relations are the relations arising from the fact that if two ``vertices'' are evaluated with the same function, then the permutation of those vertices does not change the result. In this case, both the rooted trees and the multi-indices are faithful.
		\item When $\ell=1$, the situation is quite different. Indeed, a lot of tautological relations appear, and the rooted trees and the multi-indices are no longer faithful. One needs to take the quotient by the group action.
	\end{itemize}
	We defined two notions of compatibility:
	\begin{itemize}
		\item the compatibility with the rooted trees, which is the strongest one,
		\item and the compatibility with the multi-indices, which is weaker.
	\end{itemize}
	The compatibility with the rooted trees is indeed stronger, since one may check that it implies compatibility with the multi-indices.
	In the case of $1$-weak combinatorial descriptions, the natural compatibility conditions are compatibility with $\mathrm{RT}_\mathfrak{S}$, and with $\mathrm{MI}_\mathfrak{S}$ since all the group actions become trivial.

	One can ask if is there any ``meaningful'' combinatorial way to describe elementary differentials in fixed dimension, that is not the rooted trees?

	We would like to argue that there is not. The first reason is Theorem~\ref{thm_1_b} that states that if we impose any compatibility with the algebraic structure of $\mathcal{W}_d$, and compatibility with $\mathrm{RT}$, the only possibility is $\mathrm{RT}$. The second reason is Theorem~\ref{thm:3} stating that for $d=2$ there is no faithful combinatorial description, and that we conjecture to hold for $d\neq 1$. Moreover, it sounds reasonable not to expect any faithful $2$-weak combinatorial descriptions.

	Thus, the only options are either a $1$-weak combinatorial description compatible with $\mathrm{RT}_\mathfrak{S}$, and less over-parametrised than $\mathrm{RT}_\mathfrak{S}$ that may exist, or a combinatorial description not compatible with $\mathrm{RT}$, and not faithful. None of those possibilities seems to be a ``satisfactory'' way to describe elementary differentials in a fixed dimension.

\end{document}